\def\ssign{\textsection\nobreak\hspace{1pt plus 0.3pt}}
\let\origsection=\section 
\def\mysection{\@mystartsection{section}{1}\z@{.7\linespacing\@plus\linespacing}{.5\linespacing}{\normalfont\scshape\centering\ssign}}
\def\section{\@ifstar{\origsection*}{\mysection}}
\def\appendix{\par\c@section\z@ \c@subsection\z@
	\let\sectionname\appendixname
	\let\section=\origsection
	\def\thesection{\@Alph\c@section}}
\def\@mystartsection#1#2#3#4#5#6{\if@noskipsec \leavevmode \fi
	\par \@tempskipa #4\relax
	\@afterindenttrue
	\ifdim \@tempskipa <\z@ \@tempskipa -\@tempskipa \@afterindentfalse\fi
	\if@nobreak \everypar{}\else
	\addpenalty\@secpenalty\addvspace\@tempskipa\fi
	\@dblarg{\@mysect{#1}{#2}{#3}{#4}{#5}{#6}}}
\def\@mysect#1#2#3#4#5#6[#7]#8{\edef\@toclevel{\ifnum#2=\@m 0\else\number#2\fi}\ifnum #2>\c@secnumdepth \let\@secnumber\@empty
	\else \@xp\let\@xp\@secnumber\csname the#1\endcsname\fi
	\@tempskipa #5\relax
	\ifnum #2>\c@secnumdepth
	\let\@svsec\@empty
	\else
	\refstepcounter{#1}\edef\@secnumpunct{\ifdim\@tempskipa>\z@ \@ifnotempty{#8}{\@nx\enspace}\else
		\@ifempty{#8}{.}{\@nx\enspace}\fi
	}\@ifempty{#8}{\ifnum #2=\tw@ \def\@secnumfont{\bfseries}\fi}{}\protected@edef\@svsec{\ifnum#2<\@m
		\@ifundefined{#1name}{}{\ignorespaces\csname #1name\endcsname\space
		}\fi
		\@seccntformat{#1}}\fi
	\ifdim \@tempskipa>\z@ \begingroup #6\relax
	\@hangfrom{\hskip #3\relax\@svsec}{\interlinepenalty\@M #8\par}\endgroup
	\ifnum#2>\@m \else \@tocwrite{#1}{#8}\fi
	\else
	\def\@svsechd{#6\hskip #3\@svsec
		\@ifnotempty{#8}{\ignorespaces#8\unskip
			\@addpunct.}\ifnum#2>\@m \else \@tocwrite{#1}{#8}\fi
	}\fi
	\global\@nobreaktrue
	\@xsect{#5}}
\def\rmlabel{\upshape({\kern-0.0833em\itshape\roman*\kern+0.0833em})}
\def\nlabel{\upshape(\kern-0.0833em{\itshape\arabic*}\kern0.0833em)}
\def\alabel{\upshape({\kern-0.0833em\itshape\alph*\kern0.0833em})}
\def\Alabel{\upshape({\kern-0.0833em\itshape\Alph*\kern0.1111em})}
\pgfplotsset{compat=1.15}
\def\rmlabel{\upshape({\itshape \roman*\,})}
\def\alabel{\upshape({\itshape \alph*\,})}
\def\Alabel{\upshape({\itshape \Alph*\,})}
\def\nlabel{\upshape({\itshape \arabic*\,})}
\let\setminus=\smallsetminus
\let\emptyset=\varnothing
\let\vn=\varnothing
\def\moverlay{\mathpalette\mov@rlay}
\def\mov@rlay#1#2{\leavevmode\vtop{   \baselineskip\z@skip \lineskiplimit-\maxdimen
		\ialign{\hfil$\m@th#1##$\hfil\cr#2\crcr}}}
\newcommand{\charfusion}[3][\mathord]{
	#1{\ifx#1\mathop\vphantom{#2}\fi
		\mathpalette\mov@rlay{#2\cr#3}
	}
	\ifx#1\mathop\expandafter\displaylimits\fi}
\newcommand*\linenomathpatch[1]{%
	\expandafter\pretocmd\csname #1\endcsname {\linenomath}{}{}%
	\expandafter\pretocmd\csname #1*\endcsname{\linenomath}{}{}%
	\expandafter\apptocmd\csname end#1\endcsname {\endlinenomath}{}{}%
	\expandafter\apptocmd\csname end#1*\endcsname{\endlinenomath}{}{}%
}
\newcommand*\linenomathpatchAMS[1]{%
	\expandafter\pretocmd\csname #1\endcsname {\linenomathAMS}{}{}%
	\expandafter\pretocmd\csname #1*\endcsname{\linenomathAMS}{}{}%
	\expandafter\apptocmd\csname end#1\endcsname {\endlinenomath}{}{}%
	\expandafter\apptocmd\csname end#1*\endcsname{\endlinenomath}{}{}%
}
\let\linenomathAMS\linenomathWithnumbers
\patchcmd\linenomathAMS{\advance\postdisplaypenalty\linenopenalty}{}{}{}
\let\linenomathAMS\linenomathNonumbers
\theoremstyle{plain}
\newtheorem{theorem}{Theorem}[section]
\crefname{theorem}{Theorem}{Theorems}
\newtheorem{proposition}[theorem]{Proposition}
\crefname{proposition}{Proposition}{Propositions}
\newtheorem{corollary}[theorem]{Corollary}
\crefname{corollary}{Corollary}{Corollaries}
\newtheorem{lemma}[theorem]{Lemma}
\crefname{lemma}{Lemma}{Lemmata}
\crefname{conjecture}{Conjecture}{Conjectures}
\crefname{problem}{Problem}{Problem}
\newtheorem{claim}[theorem]{Claim}
\crefname{claim}{Claim}{Claims}
\newtheorem{observation}[theorem]{Observation}
\crefname{observation}{Observation}{Observations}
\crefname{setup}{Setup}{Setups}
\newtheorem{fact}[theorem]{Fact}
\crefname{fact}{Fact}{Facts}
\crefname{algorithm}{Algorithm}{Algorithms}
\crefname{remark}{Remark}{Remarks}
\crefname{example}{Example}{Examples}
\theoremstyle{definition}
\newtheorem{definition}[theorem]{Definition}
\crefname{definition}{Definition}{Definitions}
\crefname{construction}{Construction}{Constructions}
\crefname{question}{Question}{Questions}
\numberwithin{equation}{section}
\crefname{appendix}{Appendix}{Appendix}
\crefname{figure}{Figure}{Figures}
\newcommand{\rf}[1]{\cref{#1} (\nameref*{#1})}
\theoremstyle{definition}
\newtheorem*{space-prop}{Space}
\def\rmlabel{\upshape({\itshape \roman*\,})}
\def\alabel{\upshape({\itshape \alph*\,})}
\def\Alabel{\upshape({\itshape \Alph*\,})}
\def\nlabel{\upshape({\itshape \arabic*\,})}
\newenvironment{proofclaim}[1][Proof of the claim]{\begin{proof}[#1]\renewcommand*{\qedsymbol}{\(\blacksquare\)}}{\end{proof}}
\def\COMMENT#1{}
\let\polishlcross=\l
\def\l{\ifmmode\ell\else\polishlcross\fi}
\newcommand{\vecb}{\mathbf} 
\newcommand{\es}{\emptyset}
\newcommand{\eps}{\varepsilon}
\renewcommand{\rho}{\varrho}
\newcommand{\sm}{\setminus}
\renewcommand{\subset}{\subseteq}
\newcommand{\NATS}{\mathbb{N}}
\newcommand{\NATSZ}{\mathbb{N}_0}
\newcommand{\INTS}{\mathbb{Z}}
\DeclareMathOperator{\Hom}{{Hom}} 
\newcommand{\Exp}{\mathbb{E}}
\newcommand{\ori}[1]{\smash{\overrightarrow{#1}}}
\let\vn\relax
\newcommand{\vn}{\mathbf{1}}
\let\th\relax
\DeclareMathOperator{\th}{\delta}
\newcommand{\cC}{\mathcal{C}}
\newcommand{\cD}{\mathcal{D}}
\newcommand{\cF}{\mathcal{F}}
\newcommand{\cG}{\mathcal{G}}
\newcommand{\cL}{\mathcal{L}}
\newcommand{\cP}{\mathcal{P}}
\newcommand{\cQ}{\mathcal{Q}}
\newcommand{\cT}{\mathcal{T}}
\newcommand{\cV}{\mathcal{V}}
\newcommand{\cW}{\mathcal{W}}
\DeclareMathOperator{\cover}{{cov}}
\DeclareMathOperator{\res}{{res}}
\DeclareMathOperator{\divn}{{\div}}
\let\div\relax
\DeclareMathOperator{\div}{\mathsf{Div}}
\DeclareMathOperator{\con}{\mathsf{Con}}
\DeclareMathOperator{\spa}{\mathsf{Spa}}
\DeclareMathOperator{\ham}{\mathsf{HC}}
\DeclareMathOperator{\frt}{\mathsf{FT}}
\DeclareMathOperator{\frct}{\mathsf{FCT}}
\DeclareMathOperator{\hf}{\mathsf{HF}}
\DeclareMathOperator{\dcon}{{\con}}
\DeclareMathOperator{\dspa}{{\spa}}
\DeclareMathOperator{\hamcon}{\mathsf{HamCon}}
\DeclareMathOperator{\Del}{\mathsf{Del}}
\DeclareMathOperator{\adh}{\mathsf{adh}}
\DeclareMathOperator{\tc}{\mathsf{c}}
\let\P\relax
\DeclareMathOperator{\P}{\mathsf{P}}
\let\div\relax
\DeclareMathOperator{\div}{\mathsf{Div}}
\newcommand{\PG}[3]{{P^{(#3)}}(#1,#2)}
\DeclareMathOperator{\Deg}{\mathsf{MinDeg}}
\newcommand{\DegF}[2]{\Deg_{#1,#2}}
\newcommand{\tightly}{}
\newcommand{\tight}{}
\title{Loose Hamiltonicity}
\author[R.~Lang]{Richard Lang}
\address[R.~Lang]{
	Departament de Matemàtiques,
	Universitat Politècnica de Catalunya,
	Barcelona, Spain and
	Centre de Recerca Matemàtica, Barcelona, Spain
}
\email{richard.lang@upc.edu}
\author[N.~Sanhueza-Matamala]{Nicolás Sanhueza-Matamala}
\address[N.~Sanhueza-Matamala]{ 
	Departamento de Ingeniería Matemática,
	Facultad de Ciencias Físicas y Matemáticas,
	Universidad de Concepción,
	Concepción, Chile
}
\email{nsanhuezam@udec.cl}
\begin{document}
	
\begin{abstract}
	We study the appearance of Hamilton $\ell$-cycles in dense $k$-uniform hypergraphs when $\ell \leq k-2$ and $k-\ell$ does not divide $k$.
	Our main result reduces this problem to the robust existence of a connected $\ell$-cycle tiling in host graph families that are approximately closed under subsampling.
	As an application, we determine the minimum $d$-degree threshold for $d=k-2$ and all $1 \leq \ell \leq k-2$ when $k - \ell$ does not divide $k$.
	We also reduce the case $\ell < d$ entirely to the corresponding (non-connected) $\ell$-cycle tiling problem.
	In addition, our outcomes lead to counting and random robust versions of these results.
	The proofs are based on the recently introduced method of blow-up covers and thus avoid the use of the Regularity Lemma and the Absorption Method.
\end{abstract}

\subjclass[2020]{05C35 (primary), 05C45, 05C65, 05C70 (secondary)}
\keywords{Hamilton cycles, hypergraphs, minimum degree}


%
\maketitle


\vspace{-1.0cm}

\section{Introduction}\label{sec:introduction}

We study vertex-spanning $\ell$-cycles in $k$-uniform hypergraphs, a notion of Hamiltonicity that dates back to Baranyai's Wreath Conjecture from the 1970s~\cite{Bar75}.
Over the past two decades, the appearance of Hamilton $\ell$-cycles in dense hypergraphs has been investigated under conditions of minimum degree and quasirandomness, 
mostly relying on combinations of the Regularity Lemma and the Absorption Method.
While there has been progress for particular choices of parameters (such as $k\leq 3$), the problem remains open in most cases.

We recently introduced a set of tools to tackle these problems in the ``tight'' setting, when $\ell=k-1$~\cite{LS24a}.
This article focuses on the ``loose'' case, when $\ell \leq k-2$.
We develop a framework for finding Hamilton $\ell$-cycles in dense hypergraphs in the case when $k-\ell$ does not divide~$k$.
It is shown that the existence of Hamilton $\ell$-cycles can be reduced to solving a connected tiling problem (\cref{thm:framework-hamilton-cycle}).
As an application, we determine the threshold for Hamiltonicity in several new instances and recover all known (asymptotic) bounds.
Moreover, we give an abstract reduction of the Hamiltonicity problem to a tiling problem when $\ell < d$.
Besides embedding a Hamilton cycle, our framework also robustly guarantees ``Hamilton connectedness'' (\cref{thm:framework-connectedness-robust}), which allows us to find Hamilton paths beginning and ending in prescribed sets of vertices.
In combination with our previous work with Joos~\cite{JLS23}, this establishes counting and random robust versions of the aforementioned results.
Our proofs are based on the recently introduced method of blow-up covers~\cite{Lan23,LS24a} and therefore avoid the use of the Regularity Lemma and the Absorption Method.

\subsection*{Background}\label{sec:background+applications}

We introduce a bit of terminology to discuss past work on Hamiltonicity in hypergraphs.
Formally, a $k$-uniform hypergraph, or \emph{$k$-graph} for short, $G$ consists of a set of vertices $V(G)$ and a set of edges $E(G)$, where each edge is a $k$-set of vertices.
We often identify $G$ with its edge set, writing $e \in G$ for an edge $e$.
The \emph{order} $v(G)$ of $G$ is its number of vertices.
For $0 \leq \ell < k$, an \emph{$\ell$-cycle}~$C$ in a $k$-uniform hypergraph $G$ has its vertices cyclically ordered such that each edge consists of $k$ consecutive vertices and consecutive edges intersect in exactly $\ell$ vertices.
Note that the order of $C$ is divisible by $k-\ell$.
Moreover, $C$ is \emph{Hamilton} if it covers all vertices of $G$.
For $1 \leq d < k$, the \emph{minimum $d$-degree $\delta_d(G)$} of $G$ is the maximum $m$ such that every set of $d$ vertices is in at least $m$ edges.
The \emph{minimum $d$-degree threshold for Hamilton $\ell$-cycles} $\delta_d^{\ham}(k,\ell)$ is the infimum of $\delta \in [0,1]$ such that for every $\eps > 0$, there is $n_0$ such that every $k$-graph $G$ on $n \geq n_0$ divisible by $k-\ell$ vertices with $\delta_d(G)\geq (\delta + \eps ) \binom{n-d}{k-d}$ contains a Hamilton $\ell$-cycle.
For instance, Dirac's theorem~\cite{Dir52} implies that $\delta_1(2, 1) = 1/2$.

Hamiltonicity for $\ell$-cycles was initially studied in the ``tight'' setting, when $\ell = k-1$.
For more background on this case, we refer the reader to the summary in the recent work of Lang, Schacht and Volec~\cite{LSV24}.
In the following, we focus on the ``loose'' setting, when $1 \leq \ell \leq k-2$.
Let
\begin{align}
	\lambda(k,\ell) = \frac{1}{\lceil \tfrac{k}{k-\ell} \rceil (k-\ell)}\,. \label{itm:lamda}
\end{align}
Kühn, Mycroft and Osthus~\cite{KMO10} showed that $\delta_{d}^{\ham}(k,\ell) = \lambda(k,\ell)$ for $d=k-1$ whenever $k-\ell \not \mid k$.
Moreover, they also gave a construction, which together with the work of Rödl, Ruciński and Szemerédi~\cite{RRS08a} shows that  $\delta_{k-1}^{\ham}(k,\ell) = 1/2$ when $k-\ell \mid k$.
Prior to our work, all known bounds for $\ell$-cycle thresholds for $d \leq k-2$ assume that $\ell \leq k/2$.
Note that in the case where $\ell < k/2$, this implies that $k-\ell \not \mid k$.
In particular, the threshold for $d = k-2$ and $1 \leq \ell < k/2$ was determined by Bastos, Mota, Schacht, Schnitzer and Schulenburg~\cite{BMSSS17}; and for $\ell = k/2$ this follows from results of Garbe and Mycroft~\cite{GM18} and Hàn, Han and Zhao~\cite{HHZ20}.
Moreover, Gan, Han, Sun and Wang~\cite{GHS+21} and Han, Sun and Wang~\cite{HSW25} obtained results in the case $d = k-3$, for  combinations of $(k, \ell)$ satisfying $1 \leq \ell < k/2$.
Additionally, several exact bounds have been obtained~\cite{BMSSS18, GM18, HZ15b}.

Hamiltonicity in hypergraphs has been studied beyond the minimum degree setting.
Lenz, Mubayi and Mycroft~\cite{LMM16} as well as Hàn, Han and Morris~\cite{HHM20} investigated Hamilton $1$-cycles in the quasirandom setting.
The problem of finding Hamilton $\ell$-cycles in the binomial random $k$-graph was resolved by Dudek and Frieze~\cite{DF13} as well as Narayanan and Schacht~\cite{NS20}.
More recently, Mycroft and Zárate-Guerén~\cite{MZ25} as well as Letzter and Ranganathan~\cite{LR25} determined the positive codegree thresholds for Hamilton $\ell$-cycles.
Beyond this, loose Hamiltonicity has also been studied in the settings of random resilience~\cite{AKL+23,PT22}, random robustness~\cite{JLS23,KMP23}, transversal structures~\cite{GHM+23} and rainbow structures~\cite{KMP25}.

\subsection*{Outcomes}\label{sec:results}

In the following, we discuss the applications of our framework, which itself is presented in \cref{sec:frameworks}.
Recall the definition of $\lambda(k,\ell)$ in~\eqref{itm:lamda}.
As a warm-up, we recover the following result of Kühn, Mycroft and Osthus~\cite{KMO10} for codegree conditions (when $d = k-1$).

\begin{theorem}\label{thm:d=k-1}
	For $1 \leq \ell \leq k-2$ with $k-\ell \not \mid k$, we have $\delta_{k-1}^{\ham}(k,\ell) = \lambda(k,\ell).$
\end{theorem}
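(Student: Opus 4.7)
The plan is to combine a classical \emph{space barrier} construction for the lower bound with an application of the framework theorem \cref{thm:framework-hamilton-cycle} for the upper bound.

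For the lower bound, I would choose $s$ to be just below $\lambda(k,\ell)n$, fix a subset $S\subseteq V$ with $|S|=s$, and let $G$ be the $k$-graph consisting of all $k$-sets that meet $S$. A $(k-1)$-set $T$ has $n-k+1$ extensions if $T\cap S\neq\emptyset$ and exactly $|S|$ extensions otherwise, so $\delta_{k-1}(G)=s=(\lambda(k,\ell)-o(1))(n-k+1)$. On the other hand, a Hamilton $\ell$-cycle has $n/(k-\ell)$ edges, each meeting $S$, while every vertex of the cycle lies in at most $\lceil k/(k-\ell)\rceil$ edges. Counting incidences between $S$ and the cycle edges gives $|S|\geq n/\bigl((k-\ell)\lceil k/(k-\ell)\rceil\bigr)=\lambda(k,\ell)n$, contradicting $|S|=s$. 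Hence $\delta_{k-1}^{\ham}(k,\ell)\geq\lambda(k,\ell)$. The hypothesis $k-\ell \nmid k$ is used implicitly to ensure that the ceiling is strict, so that the construction is not subsumed by the divisibility barrier that governs the case $k-\ell\mid k$.

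For the matching upper bound, I would apply \cref{thm:framework-hamilton-cycle} to the family $\cF_\eps$ of $k$-graphs $G$ on $n$ vertices satisfying $\delta_{k-1}(G)\geq(\lambda(k,\ell)+\eps)(n-k+1)$. Two properties need to be verified. First, $\cF_\eps$ is approximately closed under subsampling: a uniformly random subset of vertices of linear size retains the codegree bound, up to an arbitrarily small loss in $\eps$, by standard Chernoff concentration applied to each $(k-1)$-tuple. Second, each member of $\cF_\eps$ admits a spanning connected $\ell$-cycle tiling. A tiling by short $\ell$-cycles covering all but $o(n)$ vertices is delivered (asymptotically) by the codegree tiling result of Kühn--Mycroft--Osthus~\cite{KMO10}, whose threshold is exactly $\lambda(k,\ell)$; the tiling can be extracted greedily, since the codegree condition is preserved after removing $o(n)$ vertices. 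Connectivity follows from the codegree excess: every $(k-1)$-set has $\Omega(n)$ neighbours, so any two short $\ell$-cycles can be bridged by a short loose path, and absorbing these bridges into slightly enlarged $\ell$-cycles merges all tiles into a single component.

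The main obstacle is carrying out the second input while simultaneously respecting the divisibility condition $(k-\ell)\mid n$ and the robustness against small perturbations of the vertex set required by the framework. The strict excess $\eps$ above $\lambda(k,\ell)$ affords enough room for both the greedy tiling extraction and the connecting paths, but packaging these into the precise structural output of \cref{thm:framework-hamilton-cycle}—namely a tiling whose connectedness persists under subsampling—is where the bulk of the technical effort lies. Once both inputs are verified, the framework yields $\delta_{k-1}^{\ham}(k,\ell)\leq\lambda(k,\ell)$, matching the lower bound.
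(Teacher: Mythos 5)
Your lower bound is correct: the incidence count over the set $S$ is exactly the Kühn--Mycroft--Osthus space-barrier argument, which the paper does not reprove but simply attributes to~\cite{KMO10}. One small correction: $k-\ell\nmid k$ is not what makes the construction work; it is what makes $\lambda(k,\ell)$ the \emph{correct} threshold (when $k-\ell\mid k$ the divisibility barrier pushes the threshold to $1/2$), and it is a standing hypothesis of \cref{thm:framework-hamilton-cycle}, which does not apply otherwise.

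The upper bound, however, has a genuine gap in the verification of the space condition. Condition~\ref{itm:hf-matching} of \cref{def:hamilton-framework} requires that each constant-size subsample retaining the codegree condition admit a perfect \emph{fractional} $\ell$-cycle tiling --- every vertex covered with total weight exactly $1$ --- not an integer tiling covering all but $o(n)$ vertices, and certainly not one extracted greedily: removing vertices greedily preserves the codegree hypothesis only while $O(\eps n)$ vertices have been deleted, so greedy extraction does not even reach an almost-perfect tiling. Moreover, ``connectedness of the tiling'' is not the object the framework consumes: connectivity (condition~\ref{itm:hf-connected}) and space (condition~\ref{itm:hf-matching}) are separate, and for $\ell<d=k-1$ connectivity is immediate from \cref{lem:connecitivty} --- no bridging paths or absorption of bridges into enlarged cycles is needed (indeed the framework exists precisely to avoid such absorption arguments). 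What the paper actually does for the space condition is: (i) construct a single $k$-partite $\ell$-cycle $C$ whose smallest part has proportion $\lambda(k,\ell)\pm\eps$ (\cref{cor:space-cycles}, built from KMO's colouring and connecting lemmata; here $k-\ell\nmid k$ is used to ensure $\lambda(k,\ell)<1/k$, so this part really is the smallest); and (ii) apply the general fact that the codegree threshold for a perfect fractional $\{F\}$-tiling of a $k$-partite $F$ is at most its smallest-part ratio (\cref{lem:fractional-thresholds-k-1}), giving $\delta_{k-1}^{\frct}(k,\ell)\le\lambda(k,\ell)$ and hence the theorem via \cref{thm:cycle-threshold=tiling-treshold}. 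Steps (i)--(ii) are the real content of the upper bound and are absent from your sketch; citing a ``codegree tiling result of KMO with threshold exactly $\lambda(k,\ell)$'' assumes essentially what has to be proved. You also omit the consistency condition~\ref{itm:hf-intersecting}, which is trivial here but must at least be stated.
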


Turning to the case $d = k-2$,
Bastos, Mota, Schacht, Schnitzer and Schulenburg~\cite{BMSSS17} proved that for all $k \geq 3$ and $1 \leq \ell < k/2$, we have $\delta_{k-2}^{\ham}(k,\ell) = \frac{4(k-\ell)-1}{4(k-\ell)^2} = 1- (1-\lambda(k,\ell))^2$, extending a previous result of Buß, Hàn and Schacht~\cite{BHS13} for the case $k=3$.
(As mentioned above, $1 \leq \ell < k/2$ implies $k-\ell \not \mid k$.)
Our first application generalises this result to the whole range of $1 \leq \ell \leq k-2$, and is the first result for $d=k-2$ and $\ell > k/2$.

\begin{theorem}\label{thm:d=k-2}
	For $k \geq 3$ and $1 \leq \ell \leq k-3$ with $k - \ell \not \mid k$, we have
	\begin{align*}
		\delta_{k-2}^{\ham}(k,\ell) =   1- (1-\lambda(k,\ell))^2 \,.
	\end{align*}
	Moreover, if $\ell = k-2$ and $k$ is odd, then
	\begin{align*}
		\delta_{k-2}^{\ham}(k,\ell) = \max \left\{\frac{1}{4},\,1- (1-\lambda(k,\ell))^2 \right\}= \begin{cases}
			\frac{7}{16} & \text{ if $k=3$}\,, \\
			\frac{11}{36} & \text{ if $k = 5$}\,, \\
			\frac{1}{4}  & \text{ if $k \geq 7$} \,.
		\end{cases}
	\end{align*}
\end{theorem}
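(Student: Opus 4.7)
The plan is to derive both directions of the equality: an upper bound by applying the framework theorem \cref{thm:framework-hamilton-cycle}, which reduces Hamiltonicity to a robust connected $\ell$-cycle tiling problem in a subsampling-closed host family, and a matching lower bound via explicit extremal constructions.

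For the lower bound, I would exhibit the relevant extremal $k$-graphs. The standard \emph{space barrier} takes a vertex set $S\subseteq V$ of size $(1-\lambda(k,\ell)-o(1))\,n$ and includes only the edges not contained in $S$; any Hamilton $\ell$-cycle would need each of its edges to use at least $k-\ell$ vertices outside $S$, while each outside vertex lies in at most $\lceil k/(k-\ell)\rceil$ consecutive cycle edges, forcing a contradiction with $|V\setminus S|$. A direct pair count then yields $\delta_{k-2}\approx (1-(1-\lambda(k,\ell))^2)\binom{n-k+2}{2}$. When $\ell=k-2$ and $k$ is odd, a second \emph{parity barrier} is needed: fix a balanced bipartition $V=A\dcup B$ and remove all edges whose intersection with $A$ has one prescribed parity; because consecutive edges of a $(k-2)$-cycle share $k-2$ vertices and $k$ is odd, a parity-based invariant along the cycle rules out a Hamilton $\ell$-cycle. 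A direct pair count gives minimum $(k-2)$-degree approaching $\binom{n-k+2}{2}/4$, which dominates the space barrier exactly when $k\geq 7$.

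For the upper bound, I would apply \cref{thm:framework-hamilton-cycle} to the family $\mathcal{F}$ of $k$-graphs with $\delta_{k-2}(G)\geq(1-(1-\lambda(k,\ell))^2+\eps)\binom{n-k+2}{2}$, supplemented in the parity subcase by the hypothesis $\delta_{k-2}(G)\geq(1/4+\eps)\binom{n-k+2}{2}$. The family $\mathcal{F}$ is approximately closed under subsampling because minimum $(k-2)$-degree concentrates under random sparsification, so the framework reduces the task to producing a connected $\ell$-cycle tiling inside an arbitrary $G\in\mathcal{F}$. The analytic core is a codegree transfer: the value $1-(1-\lambda(k,\ell))^2$ is calibrated so that the set of $(k-2)$-tuples with codegree below the threshold required to extend an $\ell$-path covers at most a $(1-\lambda(k,\ell))^2$-fraction of pairs, which is too small to block a greedy extension. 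Running this extension along the position classes modulo $k-\ell$ produces the desired tile.

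The main obstacle is the range $\ell>k/2$, which has not appeared in previous $(k-2)$-degree analyses. There, consecutive cycle edges overlap in $\ell>k-\ell$ vertices, so the local structure behaves more like a tight cycle than a loose one, and the extension step must track enough information about the ``shared'' $\ell$-tuple to maintain $\ell$-connectedness while consuming only $k-\ell$ new vertices per edge. I would handle this by working in the $(k-\ell)$-shadow of $G$ and indexing edges via their position classes around the target cycle. The parity subcase ($\ell=k-2$, $k\in\{3,5\}$) is addressed by a separate stability dichotomy: either $G$ is structurally close to the bipartite extremal example, in which case the $1/4+\eps$ hypothesis directly forces a contradiction, or else the bipartition obstruction dissolves after a bounded modification and the main tiling argument applies.
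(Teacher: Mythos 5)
Your high-level skeleton---apply \cref{thm:framework-hamilton-cycle} to the minimum-degree family, which is subsampling-closed by concentration---is indeed the paper's route (\cref{cor:framework-simple}). But the content of the proof is the verification of the three conditions of a Hamilton $\ell$-framework (\cref{def:hamilton-framework}), and your verification does not go through. Condition~\ref{itm:hf-matching} demands a \emph{perfect fractional $\ell$-cycle tiling} of every graph in the family; the paper obtains this by constructing a $k$-partite $\ell$-cycle whose smallest part has ratio $\lambda(k,\ell)\pm\eps$ (\cref{cor:space-cycles}) and then invoking the known $(k-2)$-degree fractional tiling threshold $\max\{1-(1-m_1/m)^2,\,((m_1+m_2)/m)^2\}$ from \cref{lem:fractional-thresholds-k-2}, which rests on LP-duality machinery of Grosu--Hladk\'y; one must also check the inequality $1-(1-\lambda)^2>(\lambda+(1-\lambda)/(k-1))^2$. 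Your ``codegree transfer / greedy extension along position classes'' produces at best a long $\ell$-path: it yields neither a spanning structure nor a fractional tiling, and a greedy extension under a $(k-2)$-degree hypothesis is precisely the kind of argument the space barrier is built to defeat. The constant $1/4$ enters the upper bound only through connectivity (condition~\ref{itm:hf-connected}): \cref{lem:connecitivty-graphs} shows that $\delta_{k-2}(G)\geq(1/4+\eps)\binom{n}{2}$ forces $(k-2)$-connectivity, because any two link graphs with more than $\binom{n}{2}/4$ edges each have more than $n/2$ non-isolated vertices and hence meet. Your ``stability dichotomy'' relative to a bipartite extremal example is neither needed nor substantiated; moreover you never address connectivity in the range $\ell\leq k-3$ (handled by \cref{lem:connecitivty} via \cref{thm:cycle-threshold=tiling-treshold}), nor the consistency condition~\ref{itm:hf-intersecting}.

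On the lower bounds: the paper does not prove them but cites K\"uhn--Mycroft--Osthus for the space barrier and Han--Zhao for the $1/4$ bound, so re-deriving them is extra work; your space-barrier sketch is fine in spirit, but your parity construction is wrong. With a balanced bipartition $V=A\cup B$ and edge set consisting of all $k$-sets $e$ with $|e\cap A|$ of a prescribed parity, a $(k-2)$-set $S$ extends by every pair $\{x,y\}$ for which $|\{x,y\}\cap A|$ has the matching parity, and in either case this is roughly $\binom{n}{2}/2$ pairs, not $\binom{n}{2}/4$. Since the theorem asserts the threshold is exactly $1/4$ for $k\geq 7$, a hypergraph of minimum $(k-2)$-degree about $\binom{n}{2}/2$ must contain a Hamilton $(k-2)$-cycle, so your claimed parity invariant cannot actually obstruct one. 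The genuine $1/4$ construction of Han--Zhao is a connectivity barrier and is not the one you describe.
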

{Our contribution is the proof of the upper bounds.
The lower bound involving $\lambda(k,\ell)$ is  due to Kühn, Mycroft and Osthus~\cite{KMO10} and comes from `space barrier' constructions.
The bound of $1/4$ is due to Han and Zhao~\cite[Theorem 1.5]{HZ16} and exploits the connectedness of $\ell$-cycles.}

Our second application is of a 	more abstract nature and reduces the problem of Hamiltonicity to a problem of perfect fractional cycle tilings.
To motivate the definition of fractional tilings, let us first introduce the integral one.
Given a family of $k$-graphs $\cF$, an \emph{$\cF$-tiling} in a $k$-graph $G$ is a collection of pairwise disjoint copies of (possibly distinct) $k$-graphs in $\cF$.
Moreover, the tiling is \emph{perfect} if all vertices of $G$ are covered.
Moving to the fractional setting, a \emph{homomorphism} from a $k$-graph $F$ to $G$ is a function $\phi\colon V(F) \to  V(G)$ that maps edges to edges.
Denote by $\Hom(\cF,G)$ the set of all homomorphisms from $F$ to $G$ with $F \in \cF$.
A \emph{perfect fractional $\cF$-tiling} is a function $\omega\colon \Hom(\cF,G) \to [0,1]$ such that $\sum_{\phi \in \Hom(\cF,G)} \omega(\phi) |\phi^{-1}(v)| = 1$ for all $v \in V(G)$.
Note that a perfect $\mathcal{F}$-tiling corresponds to an integral perfect fractional $\mathcal{F}$-tiling.
When $\cF$ is the family of $\ell$-cycles, then we also speak of a \emph{fractional $\ell$-cycle tiling}.
So a Hamilton $\ell$-cycle is a particular instance of a fractional $\ell$-cycle tiling.
Finally, we define the threshold $\delta_d^{\frct}(k, \ell)$ as the infimum of $\delta \in [0,1]$ such that for every $\eps > 0$, there is $n_0$ such that every $k$-graph $G$ on $n \geq n_0$  vertices with $\delta_d(G)\geq (\delta + \eps ) \binom{n-d}{k-d}$ contains a perfect fractional $\ell$-cycle tiling.
By the observation above, it is evident that $\delta_{d}^{\frct}(k, \ell) \leq \delta_{d}^{\ham}(k,\ell)$.
Our second outcome describes conditions for which this is an equality.

\begin{theorem}\label{thm:cycle-threshold=tiling-treshold}
	For $1 \leq \ell < d \leq k-1$ with $k-\ell \not \mid k$, we have $\delta_{d}^{\ham}(k,\ell) = \delta_{d}^{\frct}(k, \ell).$
\end{theorem}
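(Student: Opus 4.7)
The inequality $\delta_d^{\frct}(k,\ell) \leq \delta_d^{\ham}(k,\ell)$ is immediate: any Hamilton $\ell$-cycle $C$ in a $k$-graph $G$ yields a perfect integral (and hence fractional) $\ell$-cycle tiling, by assigning weight $1$ to the canonical embedding $C \hookrightarrow G$ (then every vertex $v \in V(G)$ satisfies $|\phi^{-1}(v)| = 1$).

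For the opposite inequality, the plan is to invoke the framework \cref{thm:framework-hamilton-cycle}, which reduces Hamiltonicity in a host family to the robust existence of a connected perfect fractional $\ell$-cycle tiling, provided the family is approximately closed under subsampling. I would take as the host family $\mathcal{G}$ the class of $k$-graphs $G$ on sufficiently many vertices with $\delta_d(G) \geq (\delta_d^{\frct}(k,\ell) + \eps)\binom{n-d}{k-d}$. Approximate subsampling closure is routine: under independent vertex retention with probability $p$, the $d$-degrees of a random subgraph concentrate around their means $p^{k-d} \delta_d(G)$ up to relative error $o(1)$ by Chernoff, with failure probability $o(\binom{n}{d}^{-1})$; a union bound then yields an induced subhypergraph still in $\mathcal{G}$ with parameters only slightly degraded.

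By the definition of $\delta_d^{\frct}(k,\ell)$, every $G\in\mathcal{G}$ already contains a perfect fractional $\ell$-cycle tiling, and subsampling closure upgrades this to the robust version required by the framework. The crux is to verify \emph{connectedness}, and this is precisely where the hypothesis $\ell < d$ enters. The plan is to show that when $d \geq \ell+1$, connectedness is automatic: every $\ell$-set $A \subseteq V(G)$ is contained in $\Omega(n^{k-\ell})$ edges, by extending $A$ in $\binom{n-\ell}{d-\ell}$ ways to a $d$-set, applying the $\delta_d$ lower bound to each, and correcting for overcounting. A double count then shows that any two $\ell$-sets $A, B$ can be joined by a short $\ell$-path, since one can easily locate edges $e_A \supseteq A$ and $e_B \supseteq B$ intersecting in exactly $\ell$ vertices (or chain a bounded number of such edges). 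Consequently, all $\ell$-subsets of $V(G)$ lie in a single $\ell$-component, so any perfect fractional $\ell$-cycle tiling in $G$ is automatically connected, and the framework then delivers a Hamilton $\ell$-cycle.

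The main obstacle is to align the precise notion of ``connectedness'' used by \cref{thm:framework-hamilton-cycle} with the $\ell$-reachability sketched above, and to propagate the requisite robustness (under vertex removal and random subsampling) with uniform quantitative bounds. Both points should be handled by the large slack that $d \geq \ell+1$ provides: the number of edges through each $\ell$-set remains of order $n^{k-\ell}$ even after removing a small fraction of vertices or subsampling, so the reachability argument persists with room to spare throughout the framework's application.
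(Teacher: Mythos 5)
Your proposal follows essentially the same route as the paper: the easy direction is the observation that a Hamilton $\ell$-cycle is itself a perfect (integral, hence fractional) cycle tiling, and the hard direction applies the framework (via \cref{cor:framework-simple} and the Inheritance Lemma) with $F(G)=G$, obtaining the space condition directly from the definition of $\delta_d^{\frct}(k,\ell)$ and connectivity from $\ell<d$, exactly as in \cref{lem:connecitivty}. Two small points to tighten: the double count you sketch does not by itself produce edges $e_A\supseteq A$ and $e_B\supseteq B$ with $|e_A\cap e_B|\geq\ell$ --- the clean argument chains through $d$-sets (any two edges containing a common $d$-set meet in $\geq d>\ell$ vertices, and one then swaps vertices one at a time as in \cref{lem:connecitivty}) --- and you should also record the consistency condition~\ref{itm:hf-intersecting} of \cref{def:hamilton-framework}, which here is a one-line check.
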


We note that Han, Sun and Wang~\cite[Corollary 1.6]{HSW25} recently proved a similar result under the additional constraint that $\ell < d \leq 0.18k+0.82 \ell$,
 and the involved tilings use only copies of a specific (small) $\ell$-cycle.
It follows from the work of the first author~\cite[Theorem 5.8]{Lan23} that $\delta_{k-1}^{\frct}(k, \ell) = \lambda(k,\ell)$ and $\delta_{k-2}^{\frct}(k, \ell) = 1- (1-\lambda(k,\ell))^2$.
(For the sake of completeness, the details of this statement can be found in \cref{sec:applications-proofs}.)
So we obtain \cref{thm:d=k-1} and the first part of \cref{thm:d=k-2} as an immediate corollary of \Cref{thm:cycle-threshold=tiling-treshold}.
On the other hand, the threshold for perfect cycle tilings is too crude to capture a bound such as $\delta_{5}^{\ham}(7,5) = 1/4$ as stated in \cref{thm:d=k-2}.
To see why, we need a better understanding of the connectivity structure, which we discuss in the next section.

As a consequence of a more general setup and previous work with Joos~\cite{JLS23} (see~\cref{sec:connectedness-results}), we can strengthen our outcomes to the random robust setting, where edges of a dense host graph are randomly deleted.
For instance, we obtain the following random robust version of \cref{thm:d=k-2}:

\begin{theorem}\label{thm:d=k-2-random-robust}
	For $k\geq 3$, $2 \leq \ell \leq k-2$ with $k - \ell \not \mid k$ and $\eps > 0$, let $G$ be a $k$-graph on $n$  divisible by $k-\ell$ vertices with $\delta_{k-2}(G) \geq ( \th_{k-2}^{\ham}(k, \ell) + \eps ) \binom{n-d}{k-d}$.
	If $G_p \subseteq G$ is obtained by independently keeping every edge with probability $p = \omega(n^{-k+\ell})$, then with probability tending to $1$ as $n$ goes to infinity, $G_p$ contains a Hamilton $\ell$-cycle.
\end{theorem}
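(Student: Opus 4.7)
The plan is to derive Theorem~\ref{thm:d=k-2-random-robust} by combining the robust Hamilton connectedness provided by our framework (Theorem~\ref{thm:framework-connectedness-robust}) with the random-robust transference machinery developed jointly with Joos in~\cite{JLS23}. The deterministic statement (Theorem~\ref{thm:d=k-2}) already verifies that the host graph $G$ meets the structural hypotheses of the framework, so what remains is to upgrade robust connectedness in $G$ to Hamiltonicity in the binomial subsampling $G_p$ at the natural edge threshold $p = \omega(n^{-k+\ell})$.

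The first step is to verify the framework hypotheses quantitatively for the class under consideration. The family of $k$-graphs satisfying $\delta_{k-2}(G) \geq (\delta_{k-2}^{\ham}(k,\ell)+\eps)\binom{n-d}{k-d}$ is approximately closed under binomial subsampling by standard Chernoff concentration of $(k-2)$-degrees under edge deletions, and, by the proof of Theorem~\ref{thm:d=k-2}, such graphs robustly admit a perfect connected fractional $\ell$-cycle tiling. Theorem~\ref{thm:framework-connectedness-robust} then supplies, inside $G$, an exponentially large family of Hamilton $\ell$-paths joining any two prescribed endpoint $\ell$-tuples, produced by iterated blow-up covers in which every vertex-by-vertex extension admits $\Omega(n^{k-\ell})$ valid continuations. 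This counting form of Hamilton connectedness is native to the blow-up cover method and, crucially, avoids the absorbing gadgets that would be fragile under random edge removal.

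With this in hand, one invokes the main transference theorem of~\cite{JLS23}, which converts a counting-robust Hamilton connectedness statement in a dense $k$-graph into the existence of a Hamilton $\ell$-cycle in its binomial subgraph $G_p$, provided $p$ exceeds the natural threshold at which the expected number of surviving continuations per extension step is $\omega(1)$. Since each continuation adds one edge on $k-\ell$ new vertices, this threshold is precisely $p = \omega(n^{-k+\ell})$, matching the hypothesis of Theorem~\ref{thm:d=k-2-random-robust}. The main obstacle is to confirm that the count of valid continuations guaranteed by Theorem~\ref{thm:framework-connectedness-robust} is genuinely $\Omega(n^{k-\ell})$ rather than merely $n^{(k-\ell)(1-o(1))}$, since the threshold is sharp and no polylogarithmic slack is available; the restriction $\ell \geq 2$ enters here, as for $\ell=1$ the connectedness structure and the per-step extension count degenerate below what the transference scheme of~\cite{JLS23} needs at this probability.
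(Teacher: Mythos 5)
Your proposal follows essentially the same route as the paper: verify the Hamilton $\ell$-framework for the minimum $(k-2)$-degree family (the content of \cref{prp:d=k-2}), feed this into the robust Hamilton connectedness statement (\cref{thm:framework-connectedness-robust}), and then invoke the transference machinery of~\cite{JLS23} as a black box, which is exactly how \cref{cor:random-robust} and hence \cref{thm:d=k-2-random-robust} are obtained in the paper. The only caveat is that your description of the transference step (counting continuations per extension) is a heuristic for the threshold rather than the actual spread-measure argument of~\cite{JLS23}, but since both you and the paper use that result as a black box, this does not affect correctness.
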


We remark that the value of $p$ is essentially optimal~\cite{JLS23}.
In a similar way, we also obtain counting versions of our outcomes, which gives, for instance, the following variant of \cref{thm:d=k-2}:

\begin{theorem}\label{thm:d=k-2-counting}
	For $k\geq 3$, $1 \leq \ell \leq k-2$ and $\eps>0$, there exists $C > 0$ such that the following holds.
	Let $G$ be a $k$-graph on $n$  divisible by $k-\ell$ vertices with $\delta_{k-2}(G) \geq ( \th_{k-2}^{\ham}(k, \ell) + \eps ) \binom{n-d}{k-d}$.
	Then~$G$ contains at least $\exp(n \log n - Cn)$ Hamilton $\ell$-cycles.
\end{theorem}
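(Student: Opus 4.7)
The plan is to derive the counting statement as a corollary of the robust Hamilton connectedness framework (\cref{thm:framework-connectedness-robust}) combined with the counting template developed with Joos in~\cite{JLS23}. The strategy mirrors the one used to obtain \cref{thm:d=k-2-random-robust}, except that the JLS23 template is invoked for its counting conclusion rather than for its random sparsification conclusion.

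The first step is to verify that, under the minimum $(k-2)$-degree hypothesis in force, the host $k$-graph $G$ belongs to a family of hypergraphs to which \cref{thm:framework-connectedness-robust} applies. This reduces, exactly as in the proof of \cref{thm:d=k-2}, to establishing the robust existence of a (perfect fractional) connected $\ell$-cycle tiling in $G$, which is the central tiling input already produced for the main degree theorem; in particular no new extremal input is required.

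The second step is to apply the counting template from~\cite{JLS23}: given robust Hamilton $\ell$-connectedness, one builds the Hamilton $\ell$-cycle by iteratively concatenating short $\ell$-paths, and at each of the $\Theta(n)$ concatenation steps one has $\Omega(n)$ valid choices for the vertices appended. Multiplying these choices yields $\exp(\Theta(n\log n))$ ordered construction sequences, and after correcting for the polynomial overcounting of each unordered Hamilton $\ell$-cycle (rotations, reflections, and internal reorderings within the template) one arrives at the claimed $\exp(n \log n - Cn)$ lower bound.

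The main obstacle, as in all such counting arguments, is maintaining the linear-in-$n$ many-choices property uniformly throughout the construction, and in particular during the final closing-up step, where traditional absorption methods tend to lose control of the multiplicative factor. The blow-up cover method used in \cite{Lan23,LS24a} and in \cref{thm:framework-connectedness-robust} was specifically designed to preserve such uniformity, which is precisely why the reduction to the JLS23 counting template goes through. The main technical point is therefore to match the output of \cref{thm:framework-connectedness-robust} with the precise form of robustness required as input by the counting template in~\cite{JLS23}; once this compatibility is set up, the counting bound follows without further combinatorial work.
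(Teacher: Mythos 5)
Your overall architecture matches the paper's: verify the Hamilton $\ell$-framework at the $d=k-2$ threshold (this is exactly \cref{prp:d=k-2}, i.e.\ the connectivity, space and consistency checks already done for \cref{thm:d=k-2}), then feed \cref{thm:framework-connectedness-robust} into the machinery of \cite{JLS23} to extract the count. The paper's actual proof is precisely the one-line combination of \cref{prp:d=k-2} with \cref{cor:counting}.

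Where you diverge is in \emph{how} the count is extracted from \cite{JLS23}. The paper does not run a ``many choices at each concatenation step'' enumeration. Instead, \cref{cor:counting} is derived from the existence of an $O(n^{-(k-\ell)})$-\emph{spread} probability distribution on Hamilton $\ell$-cycles (the main output of \cite{JLS23}), via the standard deduction (cf.\ \cite[Section 1.4.1]{KMP23}): a $q$-spread measure assigns each individual Hamilton cycle $H$ probability at most $q^{e(H)} = q^{n/(k-\ell)}$, so the support, and hence the number of Hamilton $\ell$-cycles, is at least $q^{-n/(k-\ell)} = \exp(n\log n - Cn)$. This route buys you exactly what your sketch struggles with: there is no overcounting to control at all. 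Your direct enumeration would in principle also work, but the claim that each unordered Hamilton $\ell$-cycle is overcounted only \emph{polynomially} is not justified as stated --- a fixed cycle can typically be realised by exponentially many distinct construction sequences of the template (which piece is appended when, which connecting walk is used, etc.), and bounding this multiplicity by $\exp(O(n))$ (which would still suffice) requires an argument you have not supplied. Since the spread-distribution deduction sidesteps this entirely and is already available as a black box, I would recommend replacing the enumeration sketch by the spreadness argument; as written, the overcounting step is the one genuine gap in your proposal.
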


Finally, we present a separate result, which addresses the case when $k - \ell$ divides $k$.
In the codegree case ($d = k-1$), we have $\delta_{k-1}^{\ham}(k, \ell) = 1/2$, where the lower bound follows from the fact that, if $n$ is divisible by $k$ and $k - \ell$ divides $k$, then the existence of a Hamilton $\ell$-cycle in an $n$-vertex $k$-graph forces the existence of a perfect matching.
The next result allows us to deduce similar bounds for degree conditions beyond codegrees, by a reduction to hypergraphs of smaller uniformity.

\begin{theorem} \label{thm:generalsquash}
	For $1 \leq d, \ell \leq k-1$ and $q \geq 2$, we have $\th_{qd}^{\ham}(qk,q\ell) \leq \th_{d}^{\ham}(k,\ell)$.
\end{theorem}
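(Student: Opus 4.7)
The plan is a blow-down/blow-up argument. Set $\delta := \th_d^{\ham}(k, \ell)$ and fix $\eps > 0$. Given a $qk$-graph $H$ on $N = qn$ vertices with $n$ divisible by $k - \ell$ and $\delta_{qd}(H) \geq (\delta + \eps)\binom{N - qd}{qk - qd}$, I would take a uniformly random partition of $V(H)$ into $n$ parts $V_1, \ldots, V_n$ of size $q$ and form the auxiliary $k$-graph $G$ on vertex set $\{V_1, \ldots, V_n\}$ by declaring $\{V_{i_1}, \ldots, V_{i_k}\} \in G$ if and only if $V_{i_1} \cup \cdots \cup V_{i_k} \in H$.

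For each fixed $d$-set $T \subseteq [n]$, I would first condition on the parts $(V_i)_{i \in T}$ being some $qd$-set $A := \bigcup_{i \in T} V_i$, so that the remaining randomness is a uniform partition of $V(H) \setminus A$ into $n - d$ parts of size $q$. A short counting argument then gives
\[
\Exp\bigl[\deg_G(T) \mid (V_i)_{i \in T}\bigr] = \deg_H(A) \cdot \binom{n-d}{k-d}\bigg/\binom{N-qd}{qk-qd} \geq (\delta + \eps)\binom{n-d}{k-d},
\]
using that the probability that a given $q(k-d)$-subset of $V(H) \setminus A$ is a union of $k-d$ of the remaining parts equals $\binom{n-d}{k-d}/\binom{N-qd}{qk-qd}$. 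A transposition in the random permutation underlying the conditional partition changes $\deg_G(T)$ by at most $2\binom{n-d-2}{k-d-1} = O(n^{k-d-1})$, since only $(k-d)$-sets $T' \subseteq [n] \setminus T$ containing exactly one of the two affected part-indices can switch status. Azuma--Hoeffding for random permutations then yields $\Pr[\deg_G(T) < (\delta + \eps/2)\binom{n-d}{k-d}] \leq \exp(-\Omega(n))$, and a union bound over the $\binom{n}{d}$ choices of $T$ shows that with positive probability $\delta_d(G) \geq (\delta + \eps/2)\binom{n-d}{k-d}$.

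Fix such a partition. The definition of $\delta = \th_d^{\ham}(k,\ell)$ applied to $G$ (which has $n$ divisible by $k - \ell$ and $n$ large) yields a Hamilton $\ell$-cycle visiting the parts in some cyclic order $V_{\sigma(1)}, \ldots, V_{\sigma(n)}$. Order the vertices inside each $V_{\sigma(i)}$ arbitrarily and concatenate these orderings along the cyclic sequence to produce a cyclic ordering of $V(H)$. Every block of $qk$ consecutive vertices in this ordering equals $\bigcup_{i=j}^{j+k-1} V_{\sigma(i)}$ for some $j$, and is therefore an edge of $G$ and hence of $H$; since consecutive edges of the $\ell$-cycle shift by $k - \ell$ parts, the corresponding blocks in $H$ shift by $q(k-\ell)$ vertices, so the ordering realises a Hamilton $q\ell$-cycle in $H$, witnessing $\th_{qd}^{\ham}(qk,q\ell) \leq \delta$.

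The main obstacle is the concentration step, whose key move is conditioning on $(V_i)_{i \in T}$ before applying Azuma--Hoeffding. Without the conditioning, transpositions that move a vertex between a ``$T$-part'' and a non-$T$-part can induce Lipschitz changes of order $n^{k-d}$, yielding only $\exp(-O(1/n))$ concentration, which is far too weak to survive the union bound over the $\binom{n}{d}$ choices of $T$. Conditioning first localises the randomness to $V(H) \setminus A$, keeps the Lipschitz constant at $O(n^{k-d-1})$, and makes the variance proxy $\sum c_i^2 = O(n^{2(k-d)-1})$ small enough that a deviation of size $\Theta(n^{k-d})$ has the desired $\exp(-\Omega(n))$ tail.
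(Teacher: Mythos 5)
Your proposal is correct and follows essentially the same route as the paper: a uniformly random partition into $q$-sets, the squashed auxiliary $k$-graph, concentration of the $d$-degrees via bounded differences under transpositions of the underlying random permutation, a union bound, and a lift of the resulting Hamilton $\ell$-cycle to a $q\ell$-cycle. The paper packages the conditioning step by viewing $L_{H_\cQ}(T)$ as a random squashing of the link graph $L_H(A)$ and invoking a single squashing lemma, which is exactly your ``condition on $(V_i)_{i\in T}$ first'' device; your write-up of the final lifting step is, if anything, more explicit than the paper's.
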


Using this result together with the known thresholds, one easily obtains the following.

\begin{corollary}\label{cor:squash}
	Let $1 \leq \ell \leq k-1$ such that $k - \ell$ divides $k$ and $d,q\geq2$ with $(k-1)q \leq d < qk$.
	Then $\th_{d}^{\ham}(qk,q\ell) = 1/2$.
\end{corollary}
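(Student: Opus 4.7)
The plan is to sandwich $\th_d^{\ham}(qk,q\ell)$ between $1/2$ and $1/2$ by combining \cref{thm:generalsquash} with the known codegree threshold and a classical divisibility barrier.

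For the upper bound, I would first apply \cref{thm:generalsquash} with the parameter $d$ replaced by $k-1$, which gives
\[
\th_{q(k-1)}^{\ham}(qk,q\ell) \;\leq\; \th_{k-1}^{\ham}(k,\ell) \;=\; \tfrac12,
\]
where the equality is the codegree bound recalled in the introduction (Rödl--Ruciński--Szemerédi together with the Kühn--Mycroft--Osthus construction) and uses $(k-\ell)\mid k$. To extend the inequality to every $d\geq q(k-1)$ in the stated range, the key observation is that $\th_d^{\ham}$ is non-increasing in $d$: by summing a minimum $d$-degree bound over all $d$-sets extending a given $d'$-set and applying the identity $\binom{n-d'}{d-d'}\binom{n-d}{qk-d} = \binom{n-d'}{qk-d'}\binom{qk-d'}{d-d'}$, one sees that $\delta_d(G)\geq \alpha\binom{n-d}{qk-d}$ forces $\delta_{d'}(G)\geq \alpha\binom{n-d'}{qk-d'}$ for every $d'\leq d$. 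Hence $\th_d^{\ham}(qk,q\ell)\leq \th_{q(k-1)}^{\ham}(qk,q\ell)\leq 1/2$ whenever $d\geq q(k-1)$.

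For the lower bound, I would use the standard space/divisibility barrier. Since $q(k-\ell)\mid qk$, every Hamilton $q\ell$-cycle contains a perfect matching, formed by collecting every $k/(k-\ell)$-th edge of the cycle; it therefore suffices to exhibit a $qk$-graph $G$ on suitably many vertices with $\delta_d(G)\geq (1/2-o(1))\binom{n-d}{qk-d}$ and no perfect matching. Choosing $n$ divisible by $qk$ and subject to an appropriate parity condition, I would partition $V=A\dcup B$ with $|A|$ such that a perfect matching is impossible, and let $G$ consist of all $qk$-sets $e$ with $|e\cap A|$ of a fixed parity; a direct count yields the required $d$-degree, while a parity count of $\sum_{e\in M}|e\cap A|$ over a putative perfect matching~$M$ gives the contradiction.

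Combining the two bounds yields the corollary. The only mildly delicate point I foresee is arranging $n$ to simultaneously satisfy $qk\mid n$ and the parity obstruction, but this is straightforward given $q\geq 2$; everything else is an assembly of \cref{thm:generalsquash} with quoted ingredients.
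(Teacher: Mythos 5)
Your proposal is correct and follows essentially the same route as the paper: the upper bound via monotonicity of the relative minimum $d$-degree (the paper's \cref{fact:monotone-degrees}) combined with \cref{thm:generalsquash} at $d=k-1$ and the known codegree threshold $\th_{k-1}^{\ham}(k,\ell)=1/2$ for $k-\ell\mid k$, and the lower bound from the fact that a Hamilton $q\ell$-cycle contains a perfect matching together with the standard parity construction (which the paper simply cites rather than re-deriving).
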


{Let us mention a few consequences of \cref{cor:squash}.
Applied with $(k, \ell, q) = (2, 1, q)$ for a $q \geq 2$, we obtain that $\th_{d}^{\ham}(2q,q) = 1/2$ holds for all $q \leq d < 2q$.
This recovers the known results for those cases~\cite{GM18,HHZ20}.
Moreover, for $(k, \ell, q) = (k, k-1, q)$ with $k \geq 3$ and $q \geq 2$, this gives, to the best of our knowledge, new results.
For instance, for $k = 3$ and $q = 2$, we obtain $\th_{5}^{\ham}(6,4) = \th_{4}^{\ham}(6,4) = 1/2$.}

\subsection*{Organisation of the paper}

The remainder of the paper is organised as follows.
In the next section, we introduce a general setup to find Hamilton cycles and reduce the proofs of our applications to a simpler setting.
We finish the proofs of the applications in \cref{sec:applications-proofs}.
In~\cref{sec:proof-main-result}, we derive our main result assuming certain allocation results and technical lemmata.
The allocation proofs can be found in \cref{sec:allocation-cycle}.
The proofs of \cref{thm:generalsquash,cor:squash} are given {in~\cref{sec:squashing}}.
We finish with a few concluding remarks in \cref{sec:conclusion}.
Finally, \cref{sec:auxilliary-results-proofs} contains the proofs of two auxiliary lemmata, and \cref{sec:connectedness} contains the details of a result on Hamilton connectedness.

\section{A framework for Hamiltonicity}\label{sec:frameworks}

In this section, we present a series of abstract results, which reduce the problem of finding Hamilton cycles and related questions to a simpler setting.

\subsection{Necessary and sufficient conditions}\label{sec:necessary-conditions}

Let us begin with a discussion of the necessary conditions for `loose' Hamiltonicity.
We can interpret a Hamilton cycle as a perfect cycle tiling.
In this sense, a perfect (fractional) tiling is a precondition for the existence of a Hamilton cycle.
Another necessary condition concerns connectivity.
The \emph{$\ell$-line graph} of $G$ is the $2$-graph on vertex set $E(G)$ with an edge $ef$ whenever $|e \cap f| \geq \ell$.
A subgraph of $G$ is \emph{\tightly $\ell$-connected} if it has no isolated vertices and its edges induce a connected subgraph in the $\ell$-line graph of~$G$.
We refer to edge-maximal \tightly connected subgraphs as \emph{$\ell$-components}.
Note that an $\ell$-cycle is itself $\ell$-connected.
So a Hamilton $\ell$-cycle $C$ in a $k$-graph $G$ can be viewed as a perfect cycle tiling in an $\ell$-connected vertex-spanning subgraph $F \subset G$.
This motivates the following definition, where we in addition require that one can pick the subgraphs $F$ in a consistent way (after small changes to $G$).

\begin{definition}[Hamilton framework]\label{def:hamilton-framework}
	A family $\cG$ of $s$-vertex $k$-graphs has a \emph{Hamilton $\ell$-framework}~$F$ if for every $G \in \cG$ there is an {$s$-vertex} subgraph $F(G) \subset G$ such that
	\begin{enumerate}[(F1)]
		\item \label{itm:hf-connected} $F(G)$ is an \tight $\ell$-component, \hfill(connectivity)
		\item \label{itm:hf-matching} $F(G)$ has a perfect fractional $\ell$-cycle tiling, \hfill(space)
		\item \label{itm:hf-intersecting} $F(H-x) \cup F(H-y)$ is $\ell$-connected for any $(s+1)$-vertex $k$-graph $H$ and $x,y \in V(H)$ such that $H-x, H-y \in \cG$.\hfill(consistency)
	\end{enumerate}
\end{definition} 

The properties of Hamilton $\ell$-frameworks are themselves too fragile to imply Hamiltonicity.
Indeed, it is easy to construct families $\cG$ that admit	 a Hamilton $\ell$-framework, but whose members do not contain Hamilton $\ell$-cycles.
(Consider for instance the $2$-graph obtained by appending an edge to an odd cycle.)
To avoid this, we shall require that the property of admitting Hamilton $\ell$-frameworks is closed under sub-sampling, which adds to its robustness.
This is formalised using the notion of `property graphs', which was introduced in the context of perfect tilings~\cite{Lan23}.

\begin{definition}[Property graph]\label{def:property-graph}
	For a $k$-graph $G$ and a family of $k$-graphs $\P$, the \emph{$s$-uniform property graph}, denoted by $\PG{G}{\P}{s}$, is the $s$-graph on vertex set $V(G)$ with an edge $S \subset V(G)$ whenever the induced subgraph $G[S]$ \emph{satisfies}~$\P$, that is $G[S] \in \P$.
\end{definition}

Thus, an $s$-uniform property graph of $G$ tracks small vertex sets that inherit the property $\P$.
We remark that for all practical purposes, we can assume that $s$ is much larger than $k$, and the order of~$G$ is much larger than $s$.
Given this, we express the robustness of a property $\P$ in a $k$-graph $G$ by saying that the property $s$-graph for $\P$ has sufficiently large minimum degree.

\begin{definition}[Robustness]\label{def:robustness}
	For a family of $k$-graphs $\P$, $r=2k$ and $\delta=1-1/s^2$, a $k$-graph~$G$ \emph{satisfies $s$-robustly} $\P$ if the minimum $r$-degree of the property $s$-graph $\PG{G}{\P}{s}$ is at least~$\delta   \tbinom{n-r}{s-r}$.
\end{definition}

We are now ready to formulate our main result, whose proof can be found in \cref{sec:proof-main-result}.

\begin{theorem}[Main result]\label{thm:framework-hamilton-cycle}
	For  all $1 \leq \ell < k \leq s$ with $k-\ell \not \mid k$, there is $n_0$ such that the following holds.
	Let $\P$ be a family of $s$-vertex $k$-graphs that admits a Hamilton $\ell$-framework.
	Then every $k$-graph $G$ on $n\geq n_0$ divisible by $k-\ell$ vertices that $s$-robustly satisfies $\P$ has a Hamilton $\ell$-cycle.
\end{theorem}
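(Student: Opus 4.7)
The plan is to apply the blow-up cover method of~\cite{Lan23, LS24a}, adapted to the loose setting. The argument proceeds in three stages: an \emph{allocation} of $V(G)$ into a cyclic family of $s$-sets on which $\P$ holds, a \emph{local embedding} step that converts the fractional $\ell$-cycle tilings guaranteed on each piece into integral $\ell$-paths, and a \emph{concatenation} step that glues the pieces together into a Hamilton $\ell$-cycle using connectivity and consistency.

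For the allocation, I would exploit the $s$-robustness hypothesis, which by~\cref{def:robustness} provides a near-Dirac lower bound on the minimum $r$-degree of the property graph $\PG{G}{\P}{s}$. A greedy-plus-random selection in this property graph, formalised as the allocation lemma of~\cref{sec:allocation-cycle}, should produce a cyclic sequence $S_1, \dots, S_t$ of $s$-subsets of $V(G)$, each with $G[S_i] \in \P$, such that consecutive sets share a prescribed overlap, every vertex of $G$ is covered with the same total multiplicity, and the multiplicities are compatible with the divisibility $(k-\ell) \mid n$.

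In the local embedding step, I work inside each piece $S_i$ with the framework subgraph $F_i := F(G[S_i])$. By property~\ref{itm:hf-matching}, $F_i$ carries a perfect fractional $\ell$-cycle tiling; a blow-up cover argument then converts this into an integral union of $\ell$-paths whose internal edges lie in $F_i$ and which cover $S_i$ up to a few designated endpoints reserved as linking slots. The hypothesis $k - \ell \nmid k$ is crucial here, as it ensures that $\ell$-cycles of several distinct lengths coexist in the fractional tiling and hence provide the flexibility needed to meet the vertex-multiplicity constraints imposed by the allocation. Finally, to stitch consecutive pieces together, I would invoke properties~\ref{itm:hf-connected} and~\ref{itm:hf-intersecting}: applied to $S_i \cup S_{i+1}$, they jointly guarantee that $F_i \cup F_{i+1}$ lies in a single $\ell$-component, so a short $\ell$-path in $G$ can be inserted to fuse the local structures. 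Iterating around the cyclic allocation yields a Hamilton $\ell$-cycle.

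The main obstacle will be the local embedding step: one must convert the fractional certificates into integral $\ell$-paths with exactly the right vertex multiplicities while respecting the linking interfaces prescribed by the allocation, and do so without invoking the Regularity Lemma or the Absorption Method. This is precisely where the blow-up cover technology is deployed, and its compatibility with the space and consistency properties is the crux of the argument in the $k - \ell \nmid k$ regime.
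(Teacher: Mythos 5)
Your high-level architecture (cover, local allocation, concatenation via connectivity and consistency) is the right one, but the decomposition you describe inverts the one that actually makes the argument work, and the local embedding step as stated would fail. You propose covering $V(G)$ by a cyclic chain of constant-size $s$-sets $S_i$ with $G[S_i]\in\P$ and then, inside each $S_i$, converting the perfect fractional $\ell$-cycle tiling of $F_i=F(G[S_i])$ guaranteed by~\ref{itm:hf-matching} into an integral spanning union of $\ell$-paths of $F_i$. This rounding is impossible in general: on a fixed $s$-vertex graph a perfect fractional $\ell$-cycle tiling is strictly weaker than any integral spanning structure, and \ref{itm:hf-matching} gives nothing integral about $F_i$ itself. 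The paper's cover (\cref{prp:blow-up-cover}) instead consists of complete blow-ups $R^v(\cV^v)$, $R^e(\cW^e)$ whose \emph{reduced graphs} have constant order but whose \emph{clusters} are large; the fractional tiling lives on the reduced graph, and it is the large balanced clusters that allow it to be converted into an integral perfect $\ell$-cycle tiling of the blow-up (\cref{lem:perfect-tiling-allocation}), with leftover divisibility repaired via lattice completeness (\cref{lem:lattice-completeness}) and the paths then assembled by inserting these cycles into a skeleton path (\cref{prp:allocation-Hamilton-cycle}). Your allocation step is also not obviously easier than the theorem itself: producing a cyclic chain of roughly $n/s$ property-graph edges covering every vertex with equal multiplicity is essentially a Hamilton-cycle problem in the property $s$-graph.

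Two smaller points. First, the role of $k-\ell\nmid k$ is not that ``cycles of several distinct lengths coexist in the fractional tiling''; it enters through (a) the existence of a $k$-partite $\ell$-cycle with $\gcd$ equal to $1$, which yields completeness of the cycle lattice (\cref{prop:gcd-cycles}), and (b) the strong-connectivity statement that any two $\ell$-tuples supported in an $\ell$-component lie on a common closed $\ell$-walk (\cref{lem:strong-connectivity,prop:KMO-strongly-connected}), which can fail when $k-\ell\mid k$. Second, before the cover can be built the hypothesis must be preprocessed: the Hamilton-framework property is transferred to the robust properties $\dcon_\ell\cap\dspa_\ell$ of a $[k]$-bounded graph $G'\subset G\cup\partial_\ell(G)$ (\cref{lem:transition}) and boosted to survive vertex deletions (\cref{corollary:booster2}); your proposal skips this and the attendant divisibility bookkeeping around the cyclic cover, which is where the hypothesis $(k-\ell)\mid n$ is actually consumed.
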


The purpose of \cref{thm:framework-hamilton-cycle} is to reduce the existence of a Hamilton cycle to verifying the (much simpler) necessary conditions of Hamilton frameworks for (hyper)graph families that are robust under sub-sampling.
We illustrate this in the following subsection.

\subsection{Minimum degree thresholds}

Our framework applies to host graphs that satisfy graph properties, which are hereditary in the sense of being approximately closed under taking typical induced subgraphs of constant order.
This is formalised as follows.

\begin{definition}\label{def:degree-family}
	For $1 \leq d \leq k$ and $\delta \geq 0$, the family $\DegF{d}{\delta}$ contains, for all $k \leq n$, every $n$-vertex $k$-graph~$G$ with minimum $d$-degree $\delta_d(G) \geq \delta    \binom{n-d}{k-d}$.
\end{definition}

Some of our constant hierarchies are expressed in standard $\gg$-notation.
To be precise, we write $y \gg x$ to mean
that for any $y \in (0, 1]$ there exists an $x_0 \in (0,1)$
such that for all $x_0 \geq x$ the subsequent statements
hold.  Hierarchies with more constants are defined in a
similar way and are to be read from left to right following the order in which the constants are chosen.
Moreover, we tend to ignore rounding errors if the context allows~it.

\begin{lemma}[Inheritance Lemma]\label{lem:inheritance-minimum-degree}
	For $1/k,\,1/r,\,\eps \gg 1/s \gg 1/n$ and $\delta \geq 0$, let $G$ be an $n$-vertex $k$-graph with $\delta_d(G) \geq (\delta + \eps) \tbinom{n-d}{k-d}$.
	Then the property $s$-graph $P =\PG{G}{\DegF{d}{\delta+\eps/2}}{s}$ satisfies $\delta_{r}(P) \geq  (1-e^{-\sqrt{s}}    )  \tbinom{n-r}{s-r}$.\qed
\end{lemma}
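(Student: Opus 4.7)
The plan is to fix an arbitrary $r$-set $R\subset V(G)$ and show that a uniformly random $s$-set $S^*\supseteq R$ satisfies $G[S^*]\in\DegF{d}{\delta+\eps/2}$ with probability at least $1-e^{-\sqrt{s}}$; summing over the $\binom{n-r}{s-r}$ such $S^*$ then yields $\deg_P(R)\geq(1-e^{-\sqrt{s}})\binom{n-r}{s-r}$, which is exactly what is required. I write $S^*=R\cup S$ with $S$ a uniformly random $(s-r)$-subset of $V(G)\setminus R$. For each $d$-set $D\subset V(G)$, set $N(D)=\{T\in\binom{V(G)\setminus D}{k-d}:D\cup T\in E(G)\}$, so that $|N(D)|\geq(\delta+\eps)\binom{n-d}{k-d}$, and let $X_D=|\{T\in N(D):T\subset S^*\}|$, which coincides with $\deg_{G[S^*]}(D)$ whenever $D\subset S^*$. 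Since at most $O(r/n)|N(D)|$ sets in $N(D)$ meet $R$, a direct calculation using $r,k\ll s\ll n$ yields
\[
\Exp\bigl[X_D \,\big|\, D\subset S^*\bigr]\;\geq\;(\delta+\eps)\bigl(1-O((r+k^2)/s)\bigr)\binom{s-d}{k-d}\;\geq\;(\delta+\tfrac{3\eps}{4})\binom{s-d}{k-d}.
\]

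For concentration, I view $X_D$ (conditionally on $D\subset S^*$) as a function of the uniform random sample in $V(G)\setminus(R\cup D)$ of size $s-r-|D\setminus R|$ that determines $S^*$. Exchanging any two coordinates of this sample modifies $X_D$ by at most $c:=2\binom{s-1}{k-d-1}$, since only those $T\in N(D)$ containing one of the swapped vertices and otherwise lying inside $S^*$ can flip status. McDiarmid's bounded-differences inequality for sampling without replacement, applied with deviation $t=(\eps/4)\binom{s-d}{k-d}$, gives $t^2/(s\cdot c^2)=\Omega(\eps^2 s/k^2)$ and hence, for some $c_0=c_0(k)>0$,
\[
\Prob\bigl[X_D<(\delta+\tfrac{\eps}{2})\tbinom{s-d}{k-d} \,\big|\, D\subset S^*\bigr]\;\leq\;\exp(-c_0\eps^2 s).
\]

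Finally, a union bound over all $d$-sets $D\subset V(G)$, weighted by the probability that $D\subset S^*$, gives
\[
\Prob\bigl[G[S^*]\notin\DegF{d}{\delta+\eps/2}\bigr]\;\leq\;\sum_{D}\Prob[D\subset S^*]\exp(-c_0\eps^2 s)\;=\;\binom{s}{d}\exp(-c_0\eps^2 s)\;\leq\;e^{-\sqrt{s}},
\]
where the last inequality uses the hierarchy $1/k,\eps\gg 1/s$. The main delicate point is ensuring that the bounded-differences constant is kept at the sample-size scale $\binom{s-1}{k-d-1}$ rather than the much larger host-graph scale $\binom{n-1}{k-d-1}$; this is what makes the deviation bound genuinely exponential in $s$ and lets the concentration absorb the factor $\binom{s}{d}\leq s^d$ from the union bound. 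The conditioning on $D\subset S^*$, which reduces the random experiment to a sample entirely within $S^*$, is precisely what enables this sharper bookkeeping.
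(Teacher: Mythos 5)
Your argument is correct and is precisely the standard concentration proof that the paper invokes by reference (it gives no proof of its own, citing instead \cite[Lemma 4.9]{Lan23} and noting that the lemma ``follows from standard probabilistic concentration bounds''). Fixing an $r$-set, sampling the remaining $s-r$ vertices, lower-bounding the conditional expectation of each $d$-degree, applying a bounded-differences inequality for sampling without replacement with the swap constant kept at the sample scale $\binom{s-1}{k-d-1}$, and union-bounding over the $\binom{s}{d}$ relevant $d$-sets is exactly the intended route.
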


The proof of \cref{lem:inheritance-minimum-degree} follows from standard probabilistic concentration bounds~\cite[Lemma 4.9]{Lan23}.
We can thus restate our main result in the special case of minimum degree conditions.
For $1 \leq \ell \leq k-1$, let $\delta_d^{\hf}(k,\ell)$ be the infimum of $\delta \in [0,1]$ such that for every $\eps > 0$, there is $n_0$ such that the family of $k$-graphs $G$ on $n \geq n_0$ divisible by $k-\ell$ vertices with $\delta_d(G) \geq (\delta + \eps ) \binom{n-d}{k-d}$ admits a Hamilton $\ell$-framework.

\begin{corollary}\label{cor:framework-simple}
	For every $1 \leq d \leq k-1$ and $1 \leq \ell \leq k-2$ with $k-\ell \not \mid k$, there is $n_0$ such that the following holds.
	Every $k$-graph $G$ on $n\geq n_0$ divisible by $k-\ell$ vertices with $\delta_d(G) \geq (\delta_d^{\hf}(k,\ell) + \eps  ) \binom{n-d}{k-d}$ has a Hamilton $\ell$-cycle.
\end{corollary}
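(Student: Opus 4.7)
The proof is a direct assembly of the three pieces just introduced: the definition of $\delta_d^{\hf}(k,\ell)$, the Inheritance Lemma (\cref{lem:inheritance-minimum-degree}), and the main theorem (\cref{thm:framework-hamilton-cycle}). My plan is to exhibit a family of $s$-vertex $k$-graphs $\P$ that \emph{(i)}~admits a Hamilton $\ell$-framework and \emph{(ii)}~is $s$-robustly satisfied by $G$; then \cref{thm:framework-hamilton-cycle} produces the desired Hamilton $\ell$-cycle.

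Concretely, I would fix a hierarchy $1/k,\,1/\eps \gg 1/s \gg 1/n_0$ and choose $s$ divisible by $k-\ell$. Let $\P$ be the $s$-vertex restriction of $\DegF{d}{\delta_d^{\hf}(k,\ell) + \eps/2}$, so that $\P$ consists of the $s$-vertex $k$-graphs $H$ with $\delta_d(H) \geq (\delta_d^{\hf}(k,\ell) + \eps/2)\binom{s-d}{k-d}$. By the definition of $\delta_d^{\hf}(k,\ell)$ applied with slack $\eps/2$, since $s$ is large and divisible by $k-\ell$, the family $\P$ admits a Hamilton $\ell$-framework. On the other hand, \cref{lem:inheritance-minimum-degree} applied to $G$ with parameters $\delta = \delta_d^{\hf}(k,\ell)$ and the given $\eps$ shows that the property $s$-graph $\PG{G}{\P}{s}$ has minimum $r$-degree at least $(1-e^{-\sqrt{s}})\binom{n-r}{s-r} \geq (1-1/s^2)\binom{n-r}{s-r}$, with $r = 2k$, where the last inequality holds for $s$ sufficiently large. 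Hence $G$ $s$-robustly satisfies $\P$ in the sense of \cref{def:robustness}, and \cref{thm:framework-hamilton-cycle} (whose hypothesis $k-\ell \not \mid k$ matches our own) delivers a Hamilton $\ell$-cycle.

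The only real obstacle is the parameter bookkeeping: one must choose $s$ simultaneously large enough that the threshold definition fires at tolerance $\eps/2$, large enough that $e^{-\sqrt{s}} \leq 1/s^2$, and divisible by $k-\ell$ so that the framework assertion applies to $s$-vertex host graphs; then $n_0$ is chosen much larger than $s$. No new combinatorial content is needed beyond what has already been assembled in this section.
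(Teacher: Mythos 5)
Your proposal is correct and follows essentially the same route as the paper's own proof: define $\P$ as the $s$-vertex restriction of $\DegF{d}{\delta_d^{\hf}(k,\ell)+\eps/2}$, invoke the definition of $\delta_d^{\hf}$ to get the Hamilton $\ell$-framework, use \cref{lem:inheritance-minimum-degree} to get $s$-robustness, and conclude with \cref{thm:framework-hamilton-cycle}. Your remark that $s$ should be chosen divisible by $k-\ell$ (so that the threshold definition applies to $s$-vertex hosts) is a small point of care that the paper's proof leaves implicit.
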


\begin{proof}
	Set $\delta = \delta_d^{\hf}(k,\ell)$ and set $r=2k$.
	Given $\eps > 0$, choose $s$ with $1/k,\,\eps \gg 1/s \gg 1/n$.
	Let $G$ be a $k$-graph on $n$ vertices with $\delta_d(G) \geq (\delta + \eps) \binom{n-d}{k-d}$.
	By \cref{lem:inheritance-minimum-degree} the property $s$-graph $P =\PG{G}{\DegF{d}{\delta+\eps/2}}{s}$ satisfies $\delta_{r}(P) \geq  (1-1/s^2)  \tbinom{n-r}{s-r}$.
	Let $\P$ be the family of $s$-vertex $k$-graphs $R$ in $\DegF{d}{\delta+\eps/2}$.
	Then $\P$ admits a Hamilton framework by the definition of $\delta$.
	Thus we can finish by applying \cref{thm:framework-hamilton-cycle}.
\end{proof}

Given \cref{cor:framework-simple}, the results of \cref{sec:introduction}, namely \cref{thm:d=k-1,thm:d=k-2,thm:cycle-threshold=tiling-treshold}, follow immediately from their following framework counterparts (in which $\delta^{\ham} $ is simply replaced by $\delta^{\hf} $).

\begin{proposition}\label{prp:d=k-1}
	For $1 \leq \ell \leq k-1$ with $k-\ell \not \mid k$, we have $\delta_{k-1}^{\hf}(k,\ell) = \lambda(k,\ell).$
\end{proposition}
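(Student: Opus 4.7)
The plan is to establish matching bounds. For the lower bound $\delta_{k-1}^{\hf}(k,\ell) \geq \lambda(k,\ell)$, the key observation is that property~\ref{itm:hf-matching} is inherited by supergraphs: any perfect fractional $\ell$-cycle tiling of a spanning subgraph $F(G) \subseteq G$ is automatically a perfect fractional $\ell$-cycle tiling of $G$. The classical space-barrier construction of Kühn, Mycroft and Osthus~\cite{KMO10} provides, for every $\eps > 0$, arbitrarily large $n$-vertex $k$-graphs with $n$ divisible by $k-\ell$ and codegree at least $(\lambda(k,\ell) - \eps)(n-k+1)$ that carry no perfect fractional $\ell$-cycle tiling. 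Such a graph belongs to the candidate family but admits no $F(G)$ satisfying~\ref{itm:hf-matching}, ruling out a Hamilton framework.

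For the upper bound $\delta_{k-1}^{\hf}(k,\ell) \leq \lambda(k,\ell)$, I would make the trivial choice $F(G) := G$ for every $G$ in the family of $n$-vertex $k$-graphs satisfying $\delta_{k-1}(G) \geq (\lambda(k,\ell)+\eps)(n-k+1)$. Property~\ref{itm:hf-matching} is then exactly the codegree threshold $\delta_{k-1}^{\frct}(k,\ell) \leq \lambda(k,\ell)$ from~\cite[Theorem 5.8]{Lan23}, as noted in the excerpt. Property~\ref{itm:hf-connected} reduces to $\ell$-connectedness of $G$, which follows from a standard walk in the $(k-1)$-line graph of $G$: one can go from any edge to any other by swapping a single vertex at a time, each intermediate $(k-1)$-set extending to an edge by the codegree hypothesis, and $(k-1)$-connectedness implies $\ell$-connectedness for $\ell \leq k-1$.

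For property~\ref{itm:hf-intersecting}, given an $(n+1)$-vertex $k$-graph $H$ with $H-x$ and $H-y$ in the family, the union $(H-x) \cup (H-y)$ contains every edge of $H$ except those containing both $x$ and $y$. Both constituents are $\ell$-connected by the preceding argument, and they share every edge of $H$ that avoids $\{x,y\}$. For any $(k-1)$-set $S \subseteq V(H) \setminus \{x,y\}$, at most one extension of $S$ to an edge of $H-x$ can pass through $y$, so nearly all of the $(\lambda(k,\ell)+\eps)(n-k+1)$ edges counted by the codegree of $H-x$ at $S$ avoid $\{x,y\}$ entirely; a single such shared edge merges the $\ell$-components of $H-x$ and $H-y$ in the union into one. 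The main potential obstacle in this plan is precisely condition~\ref{itm:hf-intersecting}, as it is the only one that compares two distinct graphs in the family; however, once translated into a codegree count and combined with the $\ell$-connectedness already established, it follows readily. The remaining two conditions are direct appeals to Lang's fractional tiling threshold and the basic connectivity walk.
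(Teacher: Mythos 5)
Your proposal is correct and follows essentially the same route as the paper: the paper obtains this statement by taking $F(G)=G$ and verifying connectivity via the positive-codegree swapping argument (\cref{lem:connecitivty}), space via the fractional cycle-tiling threshold $\delta_{k-1}^{\frct}(k,\ell)\leq\lambda(k,\ell)$ (\cref{cor:d=k-1}), and consistency via a common $\ell$-set extending to edges of both $H-x$ and $H-y$, with the lower bound coming from the Kühn--Mycroft--Osthus space barrier exactly as you describe. The only cosmetic difference is that the paper factors the upper bound through the general reduction $\delta_d^{\hf}(k,\ell)=\delta_d^{\frct}(k,\ell)$ (\cref{prp:cycle-threshold=tiling-treshold}) rather than verifying the three framework conditions inline for $d=k-1$.
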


\begin{proposition}\label{prp:d=k-2}
	For $k \geq 3$ and $1 \leq \ell \leq k-3$ with $k - \ell \not \mid k$, we have
	\begin{align*}
		\delta_{k-2}^{\hf}(k,\ell) =   1- (1-\lambda(k,\ell))^2 \,.
	\end{align*}
	Moreover, if $\ell = k-2$ and $k$ is odd, then
	\begin{align*}
		\delta_{k-2}^{\hf}(k,\ell) = \max \left\{\frac{1}{4},\,1- (1-\lambda(k,\ell))^2 \right\}= \begin{cases}
			\frac{7}{16} & \text{ if $k=3$}\,, \\
			\frac{11}{36} & \text{ if $k = 5$}\,, \\
			\frac{1}{4}  & \text{ if $k \geq 7$} \,.
		\end{cases}
	\end{align*}
\end{proposition}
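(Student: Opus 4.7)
The plan is to establish matching lower and upper bounds on $\delta_{k-2}^{\hf}(k,\ell)$, treating the two ranges separately.

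\textbf{Lower bounds.} Both come from explicit constructions that violate a framework axiom. The bound $1-(1-\lambda(k,\ell))^2$ is furnished by the space barrier of Kühn, Mycroft and Osthus~\cite{KMO10}: they exhibit $k$-graphs $G$ with $\delta_{k-2}(G)$ just below this density admitting no perfect fractional $\ell$-cycle tiling, so the space axiom fails for every choice of $F(G) \subseteq G$. For the additional $1/4$ bound when $\ell = k-2$ and $k$ is odd, I would invoke the Han-Zhao construction~\cite[Theorem 1.5]{HZ16}, whose $\ell$-components are governed by a parity invariant; no vertex-spanning $\ell$-connected subgraph of those graphs can simultaneously support a perfect fractional $\ell$-cycle tiling, so the connectivity and space axioms cannot be met jointly.

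\textbf{Upper bound for $1 \leq \ell \leq k-3$.} Given $G$ with $\delta_{k-2}(G) \geq (1-(1-\lambda(k,\ell))^2 + \eps)\binom{n-k+2}{2}$, I propose to simply take $F(G) := G$ and verify the three framework axioms. The space condition is immediate from the identity $\delta_{k-2}^{\frct}(k,\ell) = 1-(1-\lambda(k,\ell))^2$, which follows from Theorem~5.8 of~\cite{Lan23} and is quoted in the paper. For the connectivity condition, a direct short-path argument in the $\ell$-line graph suffices: at this density, every $(k-2)$-set of vertices is contained in linearly many edges, so between any two edges $e,f$ one finds a bounded sequence $e=e_0,e_1,\dots,e_t=f$ with $|e_i \cap e_{i+1}| \geq k-2 \geq \ell+1$, where the last inequality uses $\ell \leq k-3$. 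The consistency condition reduces to the same argument applied to $(H-x) \cup (H-y)$, whose minimum $(k-2)$-degree differs from that of $H$ by an absolute constant absorbed into $\eps$.

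\textbf{Upper bound for $\ell = k-2$, $k$ odd.} For $k \in \{3,5\}$, the stated threshold coincides with the tiling threshold $1-(1-\lambda(k,k-2))^2$, and the argument above adapts with minor care to the short-path step, since $\ell$-adjacency now means sharing exactly $k-2$ vertices. For $k \geq 7$ the threshold is $1/4 + \eps$, already strictly above the tiling threshold, so the space condition is automatic. Here I again set $F(G) := G$, the burden being to show that $G$ is $\ell$-connected above density $1/4$: a structural analysis should rule out the parity-type invariant underlying the Han-Zhao construction, since any proper $\ell$-component would impose a degree imbalance contradicting $\delta_{k-2}(G) > \binom{n-k+2}{2}/4$. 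The consistency condition then follows as in the first case.

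\textbf{Main obstacle.} The delicate step is the upper bound for $\ell = k-2$ with $k \geq 7$ odd, where the threshold is controlled by an $\ell$-connectivity obstruction rather than the space obstruction. Establishing $\ell$-connectivity at density $1/4 + \eps$ requires a structural argument ruling out the parity invariant, qualitatively different from (and subtler than) the short-path argument used in the first case. Once $\ell$-connectivity is secured, the remaining framework axioms follow routinely.
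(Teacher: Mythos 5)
Your overall skeleton matches the paper's: take $F(G)=G$, get the space axiom from the fractional tiling threshold (\cref{lem:fractional-thresholds-k-2} via \cref{cor:d=k-2}), get consistency from the fact that the two spanning subgraphs overlap in $n-1$ vertices, and quote the Kühn--Mycroft--Osthus and Han--Zhao constructions for the lower bounds. For $\ell\leq k-3$ your connectivity argument is also the paper's (\cref{lem:connecitivty}: when $\ell<d$, positive minimum $d$-degree already forces $\ell$-connectivity by swapping one vertex of a $d$-set at a time).

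The genuine gap is the connectivity verification when $\ell=k-2$, which you defer in both sub-cases. For $k\in\{3,5\}$ it is not a ``minor adaptation of the short-path step'': that argument used $\ell\leq d-1$ in an essential way, and for $\ell=d=k-2$ it breaks down --- this is precisely why the Han--Zhao $1/4$ barrier exists. For $k\geq 7$ you correctly isolate the missing step but only gesture at ``a structural analysis ruling out the parity invariant.'' The paper fills this uniformly for all odd $k$ (note $1-(1-\lambda(k,k-2))^2>1/4$ for $k\in\{3,5\}$, so density is always at least $1/4+\eps$) with \cref{lem:connecitivty-graphs}, whose proof is short but specific: given two $(k-2)$-sets $S,S'$ lying in distinct $\ell$-components with $|S\cap S'|$ maximal, their link $2$-graphs $L(S),L(S')$ each have more than $\tfrac14\binom{n}{2}$ edges, hence each has more than $n/2$ non-isolated vertices, hence they share a non-isolated vertex $x$; choosing edges $X\in L(S)$, $X'\in L(S')$ through $x$ and replacing one vertex of each of $S,S'$ by $x$ produces $(k-2)$-sets $\hat S,\hat S'$ with strictly larger intersection that stay in the respective components (witnessed by the edges $X\cup S$ and $X'\cup S'$), contradicting maximality. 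This intersection-of-links argument, not a degree-imbalance count, is what you need to supply to close the proof.
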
 

\begin{proposition}\label{prp:cycle-threshold=tiling-treshold}
	For $1 \leq \ell < d \leq k-1$ with $k-\ell \not \mid k$, we have $\delta_{d}^{\hf}(k,\ell) = \delta_{d}^{\frct}(k, \ell).$
\end{proposition}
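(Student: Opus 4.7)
The plan is to establish the two inequalities $\delta_{d}^{\hf}(k,\ell) \geq \delta_{d}^{\frct}(k,\ell)$ and $\delta_{d}^{\hf}(k,\ell) \leq \delta_{d}^{\frct}(k,\ell)$ separately. The first is immediate: if $G$ lies in a family admitting a Hamilton $\ell$-framework $F$, then by~\ref{itm:hf-matching} the $n$-vertex subgraph $F(G) \subseteq G$ has a perfect fractional $\ell$-cycle tiling, and this same tiling also witnesses a perfect fractional $\ell$-cycle tiling of $G$ itself. So the bulk of the work is the reverse inequality.

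For $\delta_{d}^{\hf}(k,\ell) \leq \delta_{d}^{\frct}(k,\ell)$, fix $\eps>0$, set $\delta := \delta_d^{\frct}(k,\ell)$, and let $\mathcal{F}$ be the family of $n$-vertex $k$-graphs $G$ (with $n$ large, $k-\ell \mid n$) satisfying $\delta_d(G) \geq (\delta + \eps) \binom{n-d}{k-d}$. I would take the trivial assignment $F(G) := G$ for every $G \in \mathcal{F}$. Then \ref{itm:hf-matching} holds by the definition of $\delta_d^{\frct}$, so it remains to verify \ref{itm:hf-connected} and \ref{itm:hf-intersecting}.

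Both verifications rely on a single observation that uses the assumption $\ell < d$ crucially: any two $d$-subsets $D, D'$ of the vertex set can be connected by a sequence of $d$-sets differing in one vertex at each step, and consecutive sets in this sequence share $d-1 \geq \ell$ vertices. Since every $d$-set is contained in some edge (the minimum $d$-degree being positive), the corresponding edges form a chain in the $\ell$-line graph. Applying this between arbitrary $d$-subsets of two edges of $G$ shows that $G$ is $\ell$-connected and has no isolated vertices, giving~\ref{itm:hf-connected}.

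For~\ref{itm:hf-intersecting}, take $H$ on $n+1$ vertices with $H-x, H-y \in \mathcal{F}$; the union $F(H-x) \cup F(H-y)$ consists of all edges of $H$ not containing both $x$ and $y$. To connect two such edges $e_1, e_2$, I would choose starting $d$-sets $D_i \subseteq e_i$ avoiding whichever of $x, y$ lies outside $e_i$, and then route the walk through an intermediate $d$-set contained in $V(H) \setminus \{x, y\}$, which exists since $n+1 \geq d+2$. Ensuring that no intermediate $d$-set contains both $x$ and $y$ guarantees that each step is realised by an edge of the union. The only mildly delicate point is managing the transition between $d$-sets avoiding $x$ and those avoiding $y$, which is handled by the single neutral intermediate step just described.
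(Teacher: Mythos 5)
Your proposal is correct and follows essentially the same route as the paper: the lower bound is the immediate observation that property (F2) of a framework hands you a perfect fractional $\ell$-cycle tiling of the spanning subgraph $F(G)$ and hence of $G$, while the upper bound takes the trivial framework $F(G)=G$, gets (F2) from the definition of $\delta_d^{\frct}$, and gets (F1) and (F3) from the fact that $\ell<d$ lets one walk between $d$-sets one vertex at a time while consecutive sets share $d-1\geq\ell$ vertices (this is the paper's \cref{lem:connecitivty} and its one-line consistency check). Your more explicit routing for (F3) through a neutral $d$-set avoiding both $x$ and $y$ is just a spelled-out version of the paper's argument and is fine.
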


We derive the three  results above in \cref{sec:applications-proofs}.
As we shall see, in these examples the consistency condition of \cref{def:hamilton-framework} is rather trivial to obtain.
We believe that this is generally the case, meaning that Hamilton frameworks appear along with Hamilton cycles:

\begin{restatable}[]{conjecture}{conthresholds}\label{con:thresholds}
	For $1 \leq \ell,d \leq k-1$ with $k-\ell \not \mid  k$, we have  $\delta_{d}^{\hf}(k,\ell) = \delta_{d}^{\frct}(k, \ell)$.
\end{restatable}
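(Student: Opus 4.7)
The inequality $\delta_d^{\hf}(k,\ell) \geq \delta_d^{\frct}(k,\ell)$ is immediate: by condition~(F2), a Hamilton $\ell$-framework embeds a perfect fractional $\ell$-cycle tiling inside each $F(G) \subseteq G$, which automatically witnesses such a tiling in $G$ itself. The substantive direction is $\delta_d^{\hf}(k,\ell) \leq \delta_d^{\frct}(k,\ell)$: given a $k$-graph $G$ with $\delta_d(G)$ just above $\delta_d^{\frct}(k,\ell)\binom{n-d}{k-d}$, we must produce a spanning $F(G) \subseteq G$ satisfying (F1)--(F3). The plan is to extend the argument behind \cref{prp:cycle-threshold=tiling-treshold} from the range $\ell < d$ across to $\ell \geq d$.

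The degree hypothesis yields a perfect fractional $\ell$-cycle tiling $\omega$ of $G$; its support $F' = \supp(\omega)$ is a spanning subgraph that already satisfies (F2). In general $F'$ splits into several $\ell$-components, so the central task is to enlarge $F'$ using edges of $G \setminus F'$ into a single $\ell$-component and thus secure~(F1). To merge two $\ell$-components $F_1, F_2$, the intended strategy is a local swap on the tiling: replace an edge of a cycle in $F_1$ by a short alternating sequence of edges in $G$ that share $\ell$-sized intersections with the cycles in both $F_1$ and $F_2$, producing a longer $\ell$-cycle covering $V(F_1) \cup V(F_2)$. When $\ell < d$, this is essentially the argument of \cref{prp:cycle-threshold=tiling-treshold}, because the $d$-shadow of any edge forces many $\ell$-shared neighbours. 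When $\ell \geq d$, the connection must be extracted from the global tiling structure, e.g.\ by a double-counting showing that a typical vertex of $V(F_1)$ lies in many $\ell$-cycles supported by $\omega$ and then isolating one that straddles $F_1$ and another component. For~(F3), one chooses $F(\emptyargument)$ canonically (say lexicographically smallest subject to (F1)--(F2)); then $F(H-x)$ and $F(H-y)$ agree on most edges of the same host $H$, and their union omits only the (few) edges of $H$ containing both $x$ and $y$, from which $\ell$-connectivity follows from the degree hypothesis on $H$.

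The main obstacle is the bridging step when $\ell \geq d$: a single $d$-set extends to many $k$-edges, but two $k$-edges sharing a common $d$-set need not share $\ell$ vertices, so the $d$-degree assumption does not translate directly into $\ell$-line-graph connectivity. Any proof therefore has to interleave the swap operation with tiling-level accounting; this is exactly the step that should exploit the hypothesis $k-\ell \nmid k$, since the absence of divisibility guarantees $\ell$-cycles of several distinct lengths and so provides the flexibility needed to complete the reroute.
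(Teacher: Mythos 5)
This statement is posed in the paper as a \emph{conjecture}: the authors do not prove it, and they only establish the special case $\ell<d$ (\cref{prp:cycle-threshold=tiling-treshold}), where \cref{lem:connecitivty} makes the connectivity condition~\ref{itm:hf-connected} automatic from the degree hypothesis. Your write-up correctly identifies that the easy direction is $\delta_d^{\hf}\geq\delta_d^{\frct}$ and that the whole difficulty sits in upgrading the support of a fractional tiling to a single spanning $\ell$-component when $\ell\geq d$ --- but it then leaves exactly that step unresolved, saying only that ``any proof therefore has to interleave the swap operation with tiling-level accounting''. A sketch that names its own missing step is not a proof, so the substantive inequality $\delta_d^{\hf}\leq\delta_d^{\frct}$ has not been established.

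More seriously, the merging strategy cannot succeed in general, because the equality as literally stated is inconsistent with the paper's own results. Take $k\geq 7$ odd and $\ell=d=k-2$ (so $k-\ell=2\not\mid k$, within the scope of the conjecture). Then $\lambda(k,k-2)=1/(k+1)$, and \cref{cor:d=k-2} gives $\delta_{k-2}^{\frct}(k,k-2)\leq 1-(1-\lambda(k,k-2))^2=(2k+1)/(k+1)^2<1/4$, whereas \cref{prp:d=k-2} gives $\delta_{k-2}^{\hf}(k,k-2)=1/4$. The discrepancy is witnessed by the Han--Zhao construction cited after \cref{thm:d=k-2}: a $k$-graph with $(k-2)$-degree just below $\tfrac14\binom{n}{2}$ that lies \emph{above} the fractional-tiling threshold (hence has a perfect fractional $\ell$-cycle tiling) but has an $\ell$-connectivity obstruction that no local swap can repair. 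On such a host your bridging step must fail, which is precisely why the paper flags that ``the threshold for perfect cycle tilings is too crude to capture a bound such as $\delta_5^{\ham}(7,5)=1/4$''. (The sentence preceding the conjecture, ``Hamilton frameworks appear along with Hamilton cycles'', suggests the intended assertion is $\delta_d^{\ham}=\delta_d^{\hf}$, a different statement that your argument also does not address.) As a further, smaller issue: defining $F(\cdot)$ as the lexicographically smallest admissible subgraph does not yield~\ref{itm:hf-intersecting}, since minimal choices are not stable under deleting a single vertex, so $F(H-x)$ and $F(H-y)$ need not agree on most edges.
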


\subsection{Hamilton connectedness}\label{sec:connectedness-results}

Next, we turn to a version of our main result, where we aim to connect any two suitable vertex tuples with a Hamilton path.
Formally, an  \emph{$\ell$-\emph{path}} is a $k$-graph that consists of consecutive edges of an $\ell$-cycle such that the first edge and the last edge do not intersect.
Note that the order of an $\ell$-path is congruent to $k$ modulo $k-\ell$.
An $\ell$-path in a $k$-graph~$G$ is \emph{Hamilton} if it covers all vertices of~$G$.
The first and last $\ell$ vertices of an $\ell$-path $P$ are the \emph{endtuples} of $P$; we say that $P$ is an \emph{$(e, f, \ell)$-path}.
It is convenient to use non-uniform hypergraphs to keep track of well-connected $\ell$-tuples.
A \emph{$k$-bounded hypergraph} (or \emph{$[k]$-graph} for short) $G$ consists of a set of vertices $V(G)$ and a set of edges $E(G)$, where each edge is a (non-empty) set of at most $k$ vertices.
We denote by $G^{(i)} \subset G$ the vertex-spanning $i$-uniform subgraph that contains the edges of uniformity $i$.
In the following, we work with a $k$-uniform and an $\ell$-uniform level, where the $\ell$-uniform level encodes sets that are suitable for connection.
An \emph{orientation $\hat{f} \in V(G)^\ell$} of an $\ell$-set $f \subset V(G)$ is an ordering of its vertices expressed as an $\ell$-tuple.

\begin{restatable}[Hamilton connectedness]{definition}{defhamiltonconnectedness}\label{def:hamilton-connectedness}
	For $1 \leq \ell \leq k-1$, denote by $\hamcon_\ell$ the set of $[k]$-graphs $G$ such that $G^{(\ell)}$ contains at least two vertex-disjoint edges and $G^{(k)}$ contains a \tight Hamilton $(\hat{e},\hat{f},\ell)$-path for all orientations $\hat{e},\hat{f} \in V(G)^\ell$ of vertex-disjoint edges $e,f \in G^{(\ell)}$.
\end{restatable}

We remark that every $s$-vertex $k$-graph $R \in \hamcon_\ell$ satisfies $s \equiv k \bmod k-\ell$.
For a $k$-graph $G$, we denote by $\partial_\ell(G)$ the \emph{shadow} $\ell$-graph on $V(G)$ whose edges are the $\ell$-sets that are contained in an edge of $G$.

We can now state our second main result, which guarantees robust Hamilton-connectedness under the same assumptions as in \cref{thm:framework-hamilton-cycle}.
It is proven in \cref{sec:connectedness}.

\begin{restatable}[Hamilton connectedness]{theorem}{thmframeworkconnectednessrobust}\label{thm:framework-connectedness-robust}
	Let $1/\ell,1/k\geq {1/s_1} \gg 1/s_2 \geq 1/s_3 \gg 1/n$.
	Let $\P$ be a family of $s_1$-vertex $k$-graphs that admits a Hamilton $\ell$-framework.
	Let $G$ be a $k$-graph on $n$ vertices
	that satisfies $s_1$-robustly $\P$.
	Then there is an $n$-vertex $[k]$-graph $G' \subset G \cup \partial_\ell (G)$ that satisfies $s$-robustly $\hamcon_\ell$ for every $s_2 \leq s \leq s_3$ with $s \equiv k \bmod k-\ell$.
\end{restatable}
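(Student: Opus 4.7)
The plan is to set $G'^{(k)} := G$ and to enrich $G$ with an $\ell$-uniform layer $G'^{(\ell)} \subset \partial_\ell(G)$ consisting of "robustly extendable" $\ell$-tuples. Concretely, an $\ell$-set $e$ would belong to $G'^{(\ell)}$ when, for every orientation $\hat e$, there are many short $\ell$-paths in $G$ starting with $\hat e$ and avoiding any prescribed small vertex set. This local definition should be inherited by typical induced subgraphs: a concentration argument in the spirit of \cref{lem:inheritance-minimum-degree} would show that for most $s$-subsets $S$ with $s_2 \leq |S| \leq s_3$, the $k$-graph $G[S]$ continues to satisfy $s_1$-robustly $\P$, and $G'[S]^{(\ell)}$ remains dense enough to contain two disjoint edges, which takes care of the first clause in the definition of $\hamcon_\ell$.

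What remains is to verify the substantive clause: for a typical $s$-set $S$ with $s \equiv k \bmod k-\ell$ and every pair of disjoint oriented $\ell$-tuples $\hat e, \hat f$ in $G'[S]^{(\ell)}$, the $k$-graph $G[S]$ carries a tight Hamilton $(\hat e, \hat f, \ell)$-path. The approach is to reduce this to the cycle version (\cref{thm:framework-hamilton-cycle}) by a short connecting trick. Using the defining extendability of $\hat e$ and $\hat f$, I would greedily build three pairwise vertex-disjoint short $\ell$-paths: $P_e$ starting with $\hat e$, $P_f$ ending with $\hat f$, and a virtual return $\ell$-path $P_{fe}$ from the new exposed endtuple of $P_f$ back to that of $P_e$, with the length of $P_{fe}$ chosen so that the total vertex count of $S \cup V(P_{fe})$ lies in the residue class modulo $k-\ell$ required by \cref{thm:framework-hamilton-cycle}. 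Applying the main framework theorem to this auxiliary graph produces a Hamilton $\ell$-cycle containing $P_e, P_f, P_{fe}$ as sub-paths (one may force these by pre-loading them and running the proof on the remaining vertices), after which deleting the virtual portion $P_{fe}$ and retaining $P_e, P_f$ yields the desired tight Hamilton $(\hat e, \hat f, \ell)$-path in $G[S]$.

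The principal obstacle is verifying that pre-loading a constant number of short fragments does not destroy the Hamilton-framework hypotheses needed to invoke \cref{thm:framework-hamilton-cycle} on the residual structure. Because each fragment has bounded length $t \ll s$, their removal perturbs the property $s_1$-graph $\PG{G[S]}{\P}{s_1}$ on only a $o(1)$ proportion of $r$-tuples, so robust satisfaction of $\P$ persists on the residual vertex set; similarly, the consistency axiom \cref{itm:hf-intersecting} of Hamilton frameworks is precisely what keeps $\ell$-connectivity intact after deleting the $O(1)$ fragment vertices and sliding the endtuples. The "robust" quantifier built into the definition of $G'^{(\ell)}$ is also exactly what permits the greedy simultaneous selection of the three disjoint connecting fragments, since it survives the removal of the at most $O(t)$ vertices used by earlier fragments. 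With these ingredients in place the reduction to the cycle case goes through and delivers the theorem.
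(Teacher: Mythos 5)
There is a genuine gap at the heart of your reduction. You propose to obtain the Hamilton $(\hat e,\hat f,\ell)$-path in $G[S]$ by attaching a ``virtual return path'' $P_{fe}$, invoking \cref{thm:framework-hamilton-cycle} on the augmented graph, and then deleting the virtual segment. This fails for two reasons. First, \cref{thm:framework-hamilton-cycle} is purely existential: it produces \emph{some} Hamilton $\ell$-cycle and offers no mechanism to force that cycle to traverse prescribed fragments $P_e$, $P_f$, $P_{fe}$ contiguously and in the prescribed orientations. Your parenthetical remedy --- ``pre-loading them and running the proof on the remaining vertices'' --- is not a black-box application of the theorem but a modification of its internal proof, and that modification is precisely the substantive content of the statement you are trying to prove. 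Second, if $P_{fe}$ uses vertices outside $S$ (as your divisibility adjustment requires), the augmented graph is no longer an induced subgraph of $G$, so it does not inherit the robustness hypotheses needed to invoke the cycle theorem at all; if instead $P_{fe}$ lives inside $S$, then deleting its interior at the end leaves a Hamilton path of $G[S]$ minus those vertices, not of $G[S]$.

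The paper's proof never passes through the cycle theorem. It re-runs the blow-up-cover machinery directly with a \emph{path-shaped} cover (\cref{prp:blow-up-cover}), defines the $\ell$-level of $G'$ via \cref{lem:blow-up-support} as those $\ell$-sets contained as an edge in many balanced $\Del_k(\dcon_\ell\cap\dspa_\ell)$-blow-ups --- a much stronger ``tracking'' property than your local ``many short paths start at $\hat e$'' --- and then glues blow-ups containing $g$ and $f$ to the two ends of the path-shaped cover via \cref{lem:connecting-blow-ups}. The forcing of the endtuples happens at the allocation level: \cref{prp:allocation-Hamilton-cycle} takes prescribed endtuples as input in each blow-up, and the resulting Hamilton paths are chained from $g$ to $f$ along the shape path. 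To salvage your outline you would need to replace the appeal to \cref{thm:framework-hamilton-cycle} by this path-shaped variant of its proof, and strengthen your definition of $G'^{(\ell)}$ so that each of its edges sits inside a whole blow-up rather than merely extending to short paths.
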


In combination with our work with Joos~\cite{JLS23},  \cref{thm:framework-connectedness-robust} yields random robust versions of \cref{cor:framework-simple} where we find Hamilton $\ell$-cycles in $k$-graphs after the edges are randomly deleted.
In particular, we obtain random robust versions of all the results in \cref{sec:results}.
We state this result for $\ell \geq 2$.
For $\ell = 1$ the quantification is slightly different (as the probability requires an extra logarithmic factor) and it is already stated and proven in past work~\cite[Theorem 2.12]{JLS23} and~\cite[Theorems 1.8]{KMP25}.
The proof of the next corollary follows using \cref{thm:framework-connectedness-robust} as a black box in exactly the same way as~\cite[Theorem 2.11]{JLS23} is proven, so we omit further details.

\begin{corollary}\label{cor:random-robust}
	Let $1 \leq \ell,d  \leq k-1$, with $\ell \geq 2$ and $k - \ell \not\mid k$ and $\eps > 0$.
	Let $G$ be a $k$-graph on $n$ divisible by $k-\ell$ vertices with $\delta_d(G) \geq ( \th_{d}^{\hf}(k, \ell) + \eps ) \binom{n-d}{k-d}$.
	If $G_p \subseteq G$ is obtained by independently keeping every edge with probability $p = \omega(n^{-k+\ell})$, then with probability tending to $1$ as $n$ goes to infinity, $G_p$ contains a Hamilton $\ell$-cycle.
\end{corollary}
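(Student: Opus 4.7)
The plan is to combine Theorem~\ref{thm:framework-connectedness-robust} with the random robust machinery of Joos, Lang and Sanhueza-Matamala~\cite{JLS23} as a black box, in exactly the manner in which~\cite[Theorem 2.11]{JLS23} is proven. First, I would translate the minimum $d$-degree assumption into the robust-property language of Definitions~\ref{def:property-graph} and~\ref{def:robustness}. Setting $\delta = \delta_d^{\hf}(k,\ell)$ and fixing a hierarchy $1/k,\,1/\ell,\,\eps \gg 1/s_1 \gg 1/s_2 \geq 1/s_3 \gg 1/n$, the Inheritance Lemma (Lemma~\ref{lem:inheritance-minimum-degree}) turns the hypothesis $\delta_d(G) \geq (\delta + \eps)\binom{n-d}{k-d}$ into the statement that $G$ satisfies $s_1$-robustly the property $\P$ of being an $s_1$-vertex $k$-graph in $\DegF{d}{\delta + \eps/2}$. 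By the definition of $\delta_d^{\hf}(k,\ell)$, this family $\P$ admits a Hamilton $\ell$-framework, so Theorem~\ref{thm:framework-connectedness-robust} produces an $n$-vertex $[k]$-graph $G' \subseteq G \cup \partial_\ell(G)$ that satisfies $s$-robustly $\hamcon_\ell$ for every admissible $s \in [s_2,s_3]$.

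Next, I would feed $G'$ and $G_p$ into the random robust scheme of~\cite[Theorem 2.11]{JLS23}. That scheme partitions $V(G)$ into roughly $n/s$ blocks of size about $s$, uses the $\hamcon_\ell$ property of $G'$ to realise any prescribed pair of disjoint $\ell$-tuples as the endtuples of a tight Hamilton $\ell$-path covering each block, and then stitches the blocks together via short connecting edges drawn from $G_p$. A union bound together with the hypothesis $p = \omega(n^{-k+\ell})$ ensures that the $O(n/s)$ required connectors appear with probability $1-o(1)$; the assumption $\ell \geq 2$ is precisely what permits a threshold of the form $n^{-k+\ell}$ without a logarithmic factor, in contrast with the $\ell = 1$ case treated in~\cite[Theorem 2.12]{JLS23} and~\cite[Theorem 1.8]{KMP25}.

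Since the substance of the argument is hidden inside Theorem~\ref{thm:framework-connectedness-robust}, which supplies the multi-scale robust Hamilton-connectedness needed by the random robust scheme, the main obstacle in the present deduction reduces to checking that Definition~\ref{def:hamilton-connectedness} aligns with the abstract hypothesis of the cited result from~\cite{JLS23}. This verification is essentially bookkeeping, and the remaining argument is carried out verbatim in~\cite{JLS23} without any modification specific to loose cycles.
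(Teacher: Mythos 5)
Your proposal is correct and coincides with the paper's intended argument: the paper likewise derives this corollary by feeding the minimum-degree hypothesis through \cref{lem:inheritance-minimum-degree}, applying \cref{thm:framework-connectedness-robust} to obtain the multi-scale robustly Hamilton-connected $[k]$-graph $G'$, and then invoking the proof of~\cite[Theorem 2.11]{JLS23} verbatim as a black box. The only caveat is that your sketch of the black box's internals (block partition plus union bound over connectors) is looser than what~\cite{JLS23} actually does, which goes via spread distributions and the fractional expectation-threshold machinery, but since both you and the paper treat that step as a citation this does not affect correctness.
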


We also obtain counting versions of \cref{cor:framework-simple}, which can be easily derived from the existence of spread distributions due to our work with Joos~\cite{JLS23} (see, for example \cite[Section 1.4.1]{KMP23} for such a deduction).

\begin{corollary}\label{cor:counting}
	For $1 \leq \ell,d  \leq k-1$ with $k - \ell \not\mid k$ and $\eps > 0$, there exists $C > 0$ such that the following holds.
	Let $G$ be a $k$-graph on $n$ divisible by $k-\ell$ vertices with $\delta_d(G) \geq ( \th_{d}^{\hf}(k, \ell) + \eps ) \binom{n-d}{k-d}$.
 	Then $G$ contains at least $\exp(n \log n - Cn)$ Hamilton $\ell$-cycles.
\end{corollary}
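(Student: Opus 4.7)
The plan is to derive the counting lower bound from the existence of a sufficiently spread probability distribution on the Hamilton $\ell$-cycles of~$G$. Such a distribution is produced by the framework of~\cite{JLS23} once robust Hamilton-connectedness is available, which in our situation is guaranteed by \cref{thm:framework-connectedness-robust}. The passage from a spread distribution to a counting lower bound is a standard computation, written out, for example, in~\cite[Section~1.4.1]{KMP23}.

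Concretely, I would first apply \cref{lem:inheritance-minimum-degree} to reduce the minimum $d$-degree hypothesis on $G$ to a robustness statement for a suitable Hamilton-framework family $\P$, exactly as in the proof of \cref{cor:framework-simple}, and then invoke \cref{thm:framework-connectedness-robust} to obtain an $n$-vertex $[k]$-graph $G' \subseteq G \cup \partial_\ell(G)$ that satisfies $s$-robustly the property $\hamcon_\ell$ for the required range of~$s$. Second, I would feed this $G'$ into the spread-distribution machinery of~\cite{JLS23} to obtain a probability measure $\mu$ supported on Hamilton $\ell$-cycles of~$G$ that is $q$-spread with $q = C_0/n^{k-\ell}$ for some constant $C_0 = C_0(\eps,k,\ell,d)$; that is, $\Prob_{\mathbf{C} \sim \mu}[S \subseteq \mathbf{C}] \leq q^{|S|}$ for every set $S$ of $k$-edges. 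Third, I would conclude via the standard spread-to-counting argument: each Hamilton $\ell$-cycle $C$ of $G$ is a set of exactly $m := n/(k-\ell)$ edges, so applying the spread property to $C$ itself gives $\mu(C) \leq q^m$, and summing over~$C$ together with $\sum_C \mu(C) = 1$ yields at least $q^{-m} = n^n \cdot C_0^{-n/(k-\ell)} \geq \exp(n\log n - Cn)$ distinct Hamilton $\ell$-cycles, for a suitable $C = C(\eps,k,\ell,d)$.

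The main obstacle is verifying that the setup of~\cite{JLS23} genuinely applies in our context, i.e.\ that the $s$-robust Hamilton-connectedness delivered by \cref{thm:framework-connectedness-robust} has the shape required to drive the iterative construction of the spread measure. This amounts to matching the sub-sample sizes $s_2,s_3$ and the minimum $r$-degree guarantee on the property graph $\PG{G}{\hamcon_\ell}{s}$ with the input hypotheses of the machinery in~\cite{JLS23}; once this compatibility is confirmed, the construction of $\mu$ proceeds exactly as in that paper, and the remainder of the argument is the elementary spread-to-counting estimate indicated above.
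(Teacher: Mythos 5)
Your proposal is correct and follows exactly the route the paper takes: it invokes the spread distributions supplied by the machinery of~\cite{JLS23} (made applicable here via \cref{thm:framework-connectedness-robust}) and then performs the standard spread-to-counting deduction of~\cite[Section~1.4.1]{KMP23}, namely that an $O(n^{-(k-\ell)})$-spread measure on Hamilton $\ell$-cycles, each consisting of $n/(k-\ell)$ edges, forces at least $\exp(n\log n - Cn)$ cycles in its support. The paper gives no further detail beyond these citations, so your write-up, including the explicit computation $q^{-n/(k-\ell)} = n^n C_0^{-n/(k-\ell)}$, is a faithful (indeed slightly more explicit) version of the intended argument.
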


This recovers and extends the work of Glock, Gould, Joos, Kühn and Osthus~\cite{GGJKO2020}, Montgomery and Pavez-Signé~\cite{MP2023} and Kelly, Müyesser and Pokrovskiy~\cite{KMP23}.
For a more detailed discussion of the subject, we refer the reader to the work of Montgomery and Pavez-Signé~\cite{MP2023}.
We remark that the proofs of \cref{thm:d=k-2-random-robust,thm:d=k-2-counting} now simply follow by combining \cref{prp:d=k-2} with \cref{cor:random-robust,cor:counting}, respectively.
More generally, these results give optimal answers whenever \cref{con:thresholds} holds.

\subsection{Methodology}

To explain our approach, we focus on the Dirac-setting where Hamiltonicity is forced via minimum degree conditions.
All known minimum degree thresholds for Hamilton $\ell$-cycles have thus far been obtained by a combination of the Absorption Method and the Regularity Lemma.
The general approach has two phases.
First, one covers most vertices with a very large $\ell$-cycle $C$; then, one integrates the leftover vertices into $C$.
The first part is a typical application of the Regularity Lemma, while the second part is usually carried out by means of the Absorption Method (although there are exceptions~\cite{KO06}).
We briefly explain these approaches, and why they inadequate in this context.

The Absorption Method in its modern form was introduced in the seminal work of Rödl, Ruciński and Szemerédi~\cite{RRS06} on tight Hamilton cycles. 
Roughly speaking, the Absorption Method allows us to integrate the leftover vertices using small devices called `absorbers'.
While it is a powerful technique, the main obstacle to applying the Absorption Method in the context of our work is that we do not know how to construct `absorbers' when $k/2 \leq \ell \leq k-2$.
Recall from the introduction that all known $d$-degree thresholds assume that $\ell \leq k/2$.
This means that edges in $\ell$-cycles only interact with their direct neighbours, which has been a crucial property for the construction of absorbers so far.

The Regularity Lemma of Szemerédi~\cite{Sze76} was first used in the context of Dirac-type problems to find powers of cycles in the $2$-graph setting~\cite{KSS98}.
Beginning with the work of Rödl, Ruciński and Szemerédi~\cite{RRS06}, it has been applied in most results on Hamiltonicity for hypergraphs.
The idea is to approximate a given Dirac-type $k$-graph $G$ with a quasirandom blow-up $R(\cV)$ such that $\cV=\{V_x\}_{x \in V(R)}$ partitions $V(G)$ and $R(\cV)$ is obtained by replacing the edges of the $k$-graph $R$ with  quasirandom $k$-partite $k$-graphs on the corresponding clusters of $\cV$.
Moreover, $R$ has bounded order and approximately inherits the minimum degree conditions of $G$.
One then turns the solution to a (fractional) perfect tiling problem in $R$ into a collection of $\ell$-paths $\cP \subset G$ that cover most of~$V(G)$.
The paths $\cP$ are consequently connected up to the aforementioned $\ell$-cycle $C$.
The main drawback of this method is that the `connecting step' can become quite technical in the setting of hypergraphs.
Although additional machinery has been developed to ease this process~\cite{ABCM17}, the connectivity problems that arise from combinations of the Regularity Lemma and the Absorption Method when $k/2 \leq \ell \leq k-1$ remain a considerable (if not prohibitive) technical burden~\cite{LS22}.

Given these difficulties, the proof of our main result (\cref{thm:framework-hamilton-cycle}) is instead based on a recently introduced approach using blow-up covers~\cite{Lan23,LS24a}.
The idea is to find complete blow-ups $R^1(\cV^1),\dots,R^t(\cV^t) \subset G$ such that the set families $\cV^i=\{V_x^i\}_{x \in V(R^i)}$ together cover all vertices of~$G$ and $R^i(\cV^i)$ is obtained by replacing the edges of the $k$-graph $R^i$ with  complete (as opposed to quasirandom) partite  $k$-graphs on the corresponding clusters of $\cV^i$.
As before, each $R^i$ should be of bounded order and approximately inherit the minimum degree conditions of $G^i$.
In our work on tight Hamilton cycles, we proved that such a setup exists under the (suitably processed) assumptions of \cref{thm:framework-hamilton-cycle} (see~\cref{prp:blow-up-cover}).
While the families $\cV^1,\dots,\cV^t$ cover individually few vertices, we can guarantee that they overlap in a cyclical way.
This reduces the problem of finding a Hamilton $\ell$-cycle in $G$ to finding a Hamilton $\ell$-path in each $R^i(\cV^i)$ with predetermined ending tuples, which allows us to connect the paths going around the chain of blow-ups.
We remark that we require the consistency property of \cref{def:hamilton-framework} to `synchronise' the path endings between blow-ups.

The discussion above reduces the problem of finding a Hamilton cycle in the original graph $G$ to solving the Hamilton connectedness problem in a complete blow-up $R(\cV)$.
We call this the `allocation problem'.
At this point, we may assume that $R$ satisfies the connectivity and space properties of \cref{def:hamilton-framework}.
Moreover, the clusters of $\cV$ are balanced and much larger than the order of $R$.
(In practice, $\cV$ also contains one exceptional singleton cluster, a detail that we ignore here.)
We first construct a short path $P$ that starts and ends in the given tuples and visits the clusters of every edge of $R$ in every possible orientation, which is possible by combining the connectedness of $R$ with the fact that $k-\ell \not \mid k$.
Next, we consider a perfect (integer-valued) $\ell$-cycle tiling $\cT \subset R(\cV) - V(P)$.
Assuming that such a tiling exists, we may finish the proof as follows.
Since $R(\cV)$ is complete, this allows us to extend $P$ to a Hamilton path using a number of copies of each $\ell$-cycle $F \subset R$ that is indicated by $\cT$.
It remains to find a perfect $\ell$-cycle tiling in $R(\cV)-V(P)$.
This is done by showing that $R$ satisfies certain divisibility properties, leading to its lattice being complete (see \cref{sec:lattice-completeness}).
We remark that at this point, it is again crucial that $k-\ell \not \mid k$.
Together with the space property of $R$, we obtain the desired perfect $\ell$-cycle tiling of $R(\cV)-P$ as desired.

\section{Applications}\label{sec:applications-proofs}

In this section, we prove \cref{thm:d=k-1,thm:d=k-2,thm:cycle-threshold=tiling-treshold}.
Given \cref{cor:framework-simple}, it suffices to verify the connectivity, space and consistency properties of \cref{def:hamilton-framework} in each of the cases.

\subsection{Connectivity}

To show \cref{thm:cycle-threshold=tiling-treshold}, we need the following observation on connectivity.

\begin{lemma}\label{lem:connecitivty}
	For $\ell < d$, every $k$-graph with positive minimum $d$-degree is $\ell$-connected.
\end{lemma}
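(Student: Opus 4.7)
The plan is to connect any two edges in the $\ell$-line graph of $G$ by interpolating between their $\ell$-subsets one vertex at a time, swapping a vertex outside the target set with one inside at each step. The single arithmetic input is the hypothesis $\ell < d$, which gives $\ell + 1 \leq d$, so that every $(\ell+1)$-set can be completed to a $d$-set and hence lies in some edge of $G$ by the positivity of $\delta_d(G)$.

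Concretely, fix $e, f \in G$ and pick $\ell$-subsets $L \subset e$ and $L' \subset f$. Write $L \setminus L' = \{a_1, \ldots, a_r\}$ and $L' \setminus L = \{b_1, \ldots, b_r\}$, of common size $r \leq \ell$, and define the interpolating $\ell$-sets
\[ L_i = (L \cap L') \cup \{b_1, \ldots, b_i\} \cup \{a_{i+1}, \ldots, a_r\} \quad \text{for } 0 \leq i \leq r,\]
so that $L_0 = L$, $L_r = L'$, and $L_{i-1}, L_i$ differ exactly in the single swap $a_i \leftrightarrow b_i$, giving $|L_{i-1} \cup L_i| = \ell + 1$. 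Since $\ell + 1 \leq d < k \leq v(G)$, one can extend $L_{i-1} \cup L_i$ to a $d$-set $D_i \subset V(G)$, and by $\delta_d(G) > 0$ there is an edge $e_i \in G$ containing $D_i$, and hence both $L_{i-1}$ and $L_i$.

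The sequence $e, e_1, e_2, \ldots, e_r, f$ is then a walk in the $\ell$-line graph of $G$: $e$ and $e_1$ share $L_0 = L$, consecutive $e_i, e_{i+1}$ share $L_i$ for $1 \leq i < r$, and $e_r, f$ share $L_r = L'$, each overlap having at least $\ell$ vertices. Hence $e$ and $f$ lie in the same $\ell$-component, proving $G$ is $\ell$-connected. There is no real obstacle beyond ensuring the inequality $\ell + 1 \leq d$; this is precisely the hypothesis $\ell < d$, and it is what makes the one-vertex-swap trick legal. Without it, adjacent $L_{i-1}, L_i$ would not be guaranteed to sit inside a common edge and the whole scheme would fail.
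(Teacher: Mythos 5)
Your proof is correct and uses essentially the same mechanism as the paper's: interpolating between the two edges by single-vertex swaps, with the legality of each swap coming from $\ell+1\leq d$ (equivalently $d-1\geq\ell$) together with the positive minimum $d$-degree. The paper packages this as a proof by contradiction on $d$-sets with $|S\cap S'|$ maximal, whereas you write out the explicit walk on $\ell$-sets, but the content is the same.
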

\begin{proof}
	Let $G$ be a $k$-graph with $\delta_d(G) > 0$, and $S \subset V(G)$ be a $d$-set.
	Since $\ell \leq d$, the edges of $G$ that contain $S$ are in the same $\ell$-component, which we denote by $C_S$.
	Now let $S' \subset V(G)$ be another $d$-set and suppose for the sake of contradiction that $C_S \neq C_{S'}$.
	Moreover, assume that $|S \cap S'|$ is maximal with this property.
	Since $C_S \neq C_{S'}$, we must have $|S \cap S'| < \ell < d$.
	Let $S''$ be obtained from $S'$ by removing a vertex of $S' \sm S$ and adding a vertex of $S \setminus S'$.
	Then $C_S = C_{S''}$ by maximality.
	But we also have $C_{S'} = C_{S''}$, since $|S' \cap S''| = d-1 \geq \ell$; a contradiction.
\end{proof}

For the proof of \cref{thm:d=k-2}, we need the following result on connectivity.

\begin{lemma}\label{lem:connecitivty-graphs}
	For $k\geq 3$ and $\eps > 0$, there is $n_0$ such that every $n$-vertex $k$-graph $G$ with $\delta_{k-2}(G) = (1/4 + \eps )\binom{n}{2}$ is $(k-2)$-connected.
\end{lemma}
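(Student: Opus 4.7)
The plan is to show that all edges of $G$ lie in a single $(k-2)$-component by reducing to pairs of $(k-2)$-sets differing in exactly one element. Since $\delta_{k-2}(G) > 0$, every vertex lies in some edge, so $G$ has no isolated vertices; moreover, for each $(k-2)$-set $S \subseteq V(G)$, the edges containing $S$ form a clique in the $(k-2)$-line graph and hence belong to a common component $C(S)$.

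The key step is to check that $C(S) = C(S')$ whenever $|S \cap S'| = k-3$. Write $S = R \cup \{u\}$ and $S' = R \cup \{v\}$ with $u \neq v$. If some edge of $G$ contains $R \cup \{u, v\}$, then it contains both $S$ and $S'$ and we are done; otherwise, the link $2$-graphs $L(S)$ and $L(S')$ have no edges meeting $v$ and $u$ respectively, so both live inside $W := V(G) \setminus (R \cup \{u, v\})$, a set of size $n-k+1$. The degree hypothesis gives each of them at least $(1/4 + \eps)\binom{n}{2}$ edges, which for large $n$ exceeds $(1/4 + \eps')\binom{|W|}{2}$ for some $\eps' > 0$. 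The heart of the proof is the tight observation that a $2$-graph on $|W|$ vertices with more than $\binom{|W|}{2}/4$ edges has vertex support strictly larger than $|W|/2$; applied to both link graphs, this forces their supports $V^A, V^B \subseteq W$ to satisfy $|V^A| + |V^B| > |W|$, so some $z \in V^A \cap V^B$ exists. Picking neighbours $x, y$ of $z$ in $L(S)$ and $L(S')$ yields edges $R \cup \{u, z, x\}$ and $R \cup \{v, z, y\}$ of $G$ sharing the $(k-2)$-set $R \cup \{z\}$, establishing $C(S) = C(S')$.

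To finish, note that the Johnson graph on $\binom{V(G)}{k-2}$ whose edges are pairs with intersection of size $k-3$ is connected for $n \geq k$, so by transitivity all $C(S)$ agree. Since each edge of $G$ contains a $(k-2)$-subset, all edges lie in this common component, proving $(k-2)$-connectedness. The main obstacle is the density estimate in the second case: the bound $1/4 + \eps$ sits exactly at the threshold forcing the vertex supports of the two link graphs to overlap, so the argument cannot afford to lose any constant factor here.
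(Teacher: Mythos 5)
Your proof is correct and is essentially the paper's argument: the heart of both is that a $2$-graph with more than $(1/4+\eps)\binom{n}{2}$ edges must have more than half its vertices non-isolated, so the supports of the two link graphs intersect in a vertex $z$, yielding two edges of $G$ through a common $(k-2)$-set. The only difference is organisational — you reduce to pairs of $(k-2)$-sets meeting in $k-3$ vertices via connectivity of the Johnson graph, whereas the paper runs a maximality argument on $|S\cap S'|$ — and this is immaterial.
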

\begin{proof}
	Consider a $(k-2)$-set $S \subset V(G)$.
	Since $\ell = k-2$, the edges of $G$ that contain $S$ are in the same $\ell$-component, which we denote by $C_S$.
	Now let $S' \subset V(G)$ be another $(k-2)$-set and suppose for the sake of contradiction that $C_S \neq C_{S'}$.
	Moreover, assume that $|S \cap S'|$ is maximal with this property.
	Let $z \in S \sm S'$ and $z' \in S' \sm S$.
	Let $L(S)$ denote the $2$-graph on $V(G)$ with an edge $X$ whenever $X \cup S$ is in $G$.
	Define $L(S')$ in the same way.
	Since $L(S)$ and $L(S')$ each contain at least $(1/4 + \eps )\binom{n}{2}$ edges,
	each of them contains more than $n/2$ non-isolated vertices.
	Hence, we can find edges $X$ and $X'$, in $L(S)$ and $L(S')$ respectively, which share a vertex $x \in X \cap X'$.
	Define $\hat S = (S \sm \{z\}) \cup \{x\}$ and $\hat S' = (S' \sm \{z'\}) \cup \{x\}$.
	By maximality, we have that $C_{\hat S} = C_{\hat S'}$.
	However, we also have $C_{S} = C_{\hat S}$ and  $C_{S'} = C_{\hat S'}$ by the choice of $x$.
	A contradiction.
\end{proof}

\subsection{Tilings}

We derive \cref{thm:d=k-1} and the first part of \cref{thm:d=k-2} from \cref{thm:cycle-threshold=tiling-treshold} by determining the fractional cycle tiling thresholds.

A \emph{proper $t$-colouring} of a $k$-graph $F$ is an assignment of $t$ natural numbers (\emph{colours}) to the vertices of $F$ such that no edge contains two vertices of the same colour.
So a proper $k$-colouring yields a partition of $V(H)$ into $k$ parts witnessing that $H$ is $k$-partite, where each part corresponds to the set of vertices receiving a given colour.
We require a few tools from Kühn, Mycroft and Osthus~\cite{KMO10}, showing that $\ell$-paths admit proper $k$-colourings of flexible shapes.
Recall that $\lambda(k, \ell)$ is defined in \eqref{itm:lamda}.
For $y \geq 0$, write $x \pm y$ to mean the numbers $z$ between $x-y$ and $x+y$.

\begin{lemma}[{\cite[Lemma 7.3]{KMO10}}]\label{lem:KMO-space}
	Let $P$ be an $\ell$-path on $m$ vertices.
	Then there is a proper $k$-colouring of $P$ with colours $1, \dots, k$ such that colour $k$ is used $ \lambda(k,\ell)m  \pm 1$ times and the sizes of all other colour classes are as equal as possible.
\end{lemma}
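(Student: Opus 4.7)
The plan is to construct the colouring explicitly, placing colour $k$ at a carefully chosen periodic set of vertices and then filling in the remaining vertices cyclically with colours from $\{1,\dots,k-1\}$. Label the vertices of $P$ as $v_1,\dots,v_m$ so that the edges are $e_i=\{v_{1+i(k-\ell)},\dots,v_{k+i(k-\ell)}\}$ for $0\le i\le t-1$, where $t=(m-\ell)/(k-\ell)$. A proper $k$-colouring amounts to an assignment of colours in $\{1,\dots,k\}$ making each edge $e_i$ rainbow. Setting $p=\lceil k/(k-\ell)\rceil$, so that $\lambda(k,\ell)=1/(p(k-\ell))$, the definition of $p$ yields the two inequalities $p(k-\ell)\ge k$ and $(p-1)(k-\ell)\le k-1$. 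These imply, respectively, that any vertex lies in at most $p$ consecutive edges and that some vertex lies in any prescribed set of $p$ consecutive edges.

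I would therefore place colour $k$ at the positions $b_j := 1+(p-1)(k-\ell)+jp(k-\ell)$ for $j=0,1,\dots,N-1$, where $N:=\lceil t/p\rceil$, with $b_{N-1}$ shifted leftward if needed so that $b_{N-1}\le m$. A short computation shows that each $b_j$ lies in exactly the edges $e_i$ with $jp\le i\le\min((j+1)p-1,\,t-1)$, so every edge contains exactly one colour-$k$ vertex. I would then colour the $m-N$ remaining vertices by cycling through colours $1,2,\dots,k-1$ in left-to-right order. To verify that this yields a proper $k$-colouring, fix any edge $e_i$: it contains its unique colour-$k$ vertex $b_j$ together with $k-1$ other vertices, which occupy $k-1$ consecutive positions in the non-$k$ sequence (because $b_j$ is just skipped in that enumeration). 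Their cyclic colours are then $k-1$ consecutive residues modulo $k-1$, hence all distinct, so together with colour $k$ we obtain a rainbow edge.

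For the counts, colour $k$ is used $N=\lceil t/p\rceil$ times; since $t/p=(m-\ell)/(p(k-\ell))=\lambda(k,\ell)m-\ell/(p(k-\ell))$ with $\ell/(p(k-\ell))\in[0,1)$, this gives $N=\lambda(k,\ell)m\pm 1$. The remaining $m-N$ vertices are distributed cyclically among the $k-1$ other colours, so each such class has size $\lfloor(m-N)/(k-1)\rfloor$ or $\lceil(m-N)/(k-1)\rceil$, which is as equal as possible. The main technical point I expect to spend time on is the observation that the non-$k$ vertices of each edge are consecutive in the non-$k$ sequence, together with the boundary issue of placing $b_{N-1}$ and covering the possibly shortened final group of edges; both reduce to bookkeeping based on the inequality $(p-1)(k-\ell)\le k-1$.
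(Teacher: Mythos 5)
Your construction is correct: the key facts $(p-1)(k-\ell)\le k-1$ and $p(k-\ell)\ge k$ do exactly what you claim, the periodic placement of colour $k$ gives each edge exactly one colour-$k$ vertex, and the cyclic filling with $1,\dots,k-1$ makes every edge rainbow because its $k-1$ non-$k$ vertices are consecutive in the reduced sequence. Note that the paper does not prove this lemma at all but imports it as \cite[Lemma~7.3]{KMO10}, and your argument is essentially the standard periodic colouring from that source; the only remark worth adding is that your ``shift $b_{N-1}$ leftward if needed'' is never needed, since $(Np-t)(k-\ell)\le(p-1)(k-\ell)\le k-1$ already forces $b_{N-1}\le m$.
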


\begin{lemma}[{\cite[Proposition 3.1]{KMO10}}]\label{prop:KMO-strongly-connected}
	Let $k\geq 3$ and $1\leq \ell \leq k-1$ with $k-\ell \not \mid k$.
	Let $H$ be a complete $k$-partite $k$-graph with parts of size $k\ell(k -\ell) + 1$.
	Let $f_1,\,f_2$ be disjoint $\ell$-tuples with at most one vertex in each part.
	Then $H$ contains a spanning $(f_1,f_2,\ell)$-path.
\end{lemma}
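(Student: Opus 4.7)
The plan is to reduce the problem to the combinatorics of \emph{colour sequences} and then realise the path using the completeness of $H$. Let $V_1, \dots, V_k$ be the parts of $H$, each of size $p = k\ell(k-\ell)+1$, and let $c(v) \in [k]$ encode the part containing $v$. Any Hamilton $\ell$-path in $H$ induces a colour sequence $(c_1, \dots, c_n) \in [k]^n$ of length $n = kp$ in which every edge-window $c_{(i-1)(k-\ell)+1}, \dots, c_{(i-1)(k-\ell)+k}$ is a permutation of $[k]$ and each colour $j$ appears exactly $p$ times. Conversely, because $H$ is complete $k$-partite, any colour sequence with the two additional constraints $c_i = c(f_1(i))$ for $i \leq \ell$ and $c_{n-\ell+i} = c(f_2(i))$ for $i \leq \ell$ can be promoted to the desired $(f_1, f_2, \ell)$-path by placing $f_1, f_2$ at the prescribed positions and greedily filling each other position with an unused vertex of the correct part. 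The task thus reduces to constructing such a colour sequence.

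I would decompose the sequence into three parts: a short \emph{opening} of length $O(k^2)$ starting with the colours of $f_1$; a long \emph{cyclic backbone} in which some permutation $\pi \in S_k$ is repeated; and a short \emph{closing} of length $O(k^2)$ ending with the colours of $f_2$. The construction of opening and closing exploits the following local freedom: at every edge boundary, the $k-\ell$ newly-introduced colours can be placed in any order without breaking the window property. By chaining such local permutations across a bounded number of edges, one can gradually shift the `phase' of the cyclic pattern, allowing arbitrary colour tuples at the endpoints to be matched and then smoothly transitioned into the backbone.

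The hypothesis $k-\ell \not\mid k$ is used exactly in the phase-shifting step. Since $\gcd(k-\ell, k) < k-\ell$, iterated shifts by $k-\ell$ modulo $k$ generate a nontrivial subgroup of $\mathbb{Z}/k\mathbb{Z}$, and hence iterated local permutations can realise any cyclic rotation of the backbone's colour alignment in a bounded number of edges. The main obstacle I anticipate is the balance condition: the opening and closing segments will generally perturb the colour counts by $O(k\ell)$ per colour relative to the perfectly-balanced backbone. The part size $p = k\ell(k-\ell)+1$ is calibrated precisely so that such deviations can be absorbed by adjusting the permutation $\pi$ of the backbone and the length split between the three pieces, while keeping $n = kp$ fixed. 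A careful counting argument tracking how each local permutation affects every colour class would then complete the construction.
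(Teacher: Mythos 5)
First, note that the paper does not prove this lemma at all: it is quoted verbatim from Kühn--Mycroft--Osthus \cite{KMO10}, so there is no in-paper proof to compare against, and your attempt has to stand on its own. Your reduction to colour sequences is sound and is the right frame: since $H$ is complete $k$-partite, it suffices to produce a word in $[k]^{kp}$ whose length-$k$ windows starting at positions $1+(i-1)(k-\ell)$ are permutations of $[k]$, which uses each colour exactly $p$ times, and whose first and last $\ell$ letters match the colours of $f_1$ and $f_2$; greedy vertex placement then finishes. You also correctly identify the available freedom: at each step the \emph{set} of $k-\ell$ newly introduced colours is forced (it is the set of colours leaving the window), while only their \emph{order} is free.

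However, the core of the proof is precisely the construction of this word, and that is where your argument has a genuine gap. The one concrete mechanism you offer for exploiting $k-\ell\nmid k$ is incorrect as stated: the subgroup of $\mathbb{Z}/k\mathbb{Z}$ generated by $k-\ell$ is nontrivial for every $1\le \ell\le k-1$, and it is all of $\mathbb{Z}/k\mathbb{Z}$ only when $\gcd(k-\ell,k)=1$, which is strictly stronger than $k-\ell\nmid k$ (take $k=6$, $\ell=2$). So iterated shifts by $k-\ell$ do not by themselves realise arbitrary rotations, and the phase-shifting step does not go through as described. The actual role of $k-\ell\nmid k$ is different: when $k-\ell\mid k$ every window is exactly a union of consecutive $(k-\ell)$-blocks, which forces the colour \emph{sets} of these blocks to repeat with period $k/(k-\ell)$ and makes arbitrary terminal tuples unreachable; when $k-\ell\nmid k$ this rigidity breaks, but one must still show that the block colour-sets can be steered so that (i) the colours of $f_2$ occupy the last $\ell$ positions in the prescribed order --- these positions straddle several blocks when $\ell>k-\ell$, so the constraint propagates backwards through the forced-set recursion --- and (ii) every colour is used exactly $p$ times. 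Those counts are not freely adjustable (the multiset recycled at each step is determined by the orderings chosen earlier), and controlling them is the entire content of the original proof. As written, your proposal translates the problem into a convenient language and names the difficulties, but does not resolve them.
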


\begin{corollary}\label{cor:space-cycles}
	Let $1 \leq \ell \leq k-1$ with $k-\ell \not \mid k$ 
	and $1/k, \eps \gg 1/t$.
	Then there is a $k$-partite $k$-uniform $\ell$-cycle on $m=t(k-\ell)$ vertices with parts of size $\alpha_1 m,\dots, \alpha_k m$ such that $\alpha_1 = \lambda(k,\ell) \pm \eps$ and $\alpha_2,\dots,\alpha_k = (1-\lambda(k,\ell))/(k-1) \pm \eps$.
\end{corollary}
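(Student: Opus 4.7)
My plan is to construct the desired $k$-partite $\ell$-cycle by taking a long $\ell$-path with the right colour distribution via Lemma~\ref{lem:KMO-space} and closing it up using Proposition~\ref{prop:KMO-strongly-connected}.

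First, I would take a long $\ell$-path $P_1$ on $m_1$ vertices, where $m_1$ is close to $m$ and chosen to satisfy the congruence $m_1 \equiv \ell \pmod{k-\ell}$ required for such a path to exist. Applying Lemma~\ref{lem:KMO-space} then yields a proper $k$-colouring of $P_1$ in which (after renaming) colour $1$ is used $\lambda(k,\ell) m_1 \pm 1$ times and the remaining colours partition the rest as evenly as possible, giving sizes $(1-\lambda(k,\ell))m_1/(k-1) \pm 1$. The two endtuples $\hat e, \hat f$ of $P_1$ automatically use at most one vertex per colour class.

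Next, I would close $P_1$ into a cycle by constructing a connecting $\ell$-path $P_2$ between $\hat e$ and $\hat f$. For this, take a complete $k$-partite $k$-graph $H$ with parts of size $k\ell(k-\ell)+1$, identify its parts with the colour classes of $P_1$, and place $\hat e, \hat f$ as disjoint $\ell$-tuples of $H$ (they fit since the parts have size $\geq 2$ and each tuple hits each part at most once). Proposition~\ref{prop:KMO-strongly-connected} then produces a spanning $(\hat e, \hat f, \ell)$-path $P_2$ of $H$. Gluing $P_1$ and $P_2$ along the shared endtuples yields a properly $k$-coloured, hence $k$-partite, $\ell$-cycle, whose vertex count can be tuned to exactly $m$ by choosing $m_1$ appropriately.

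The main technical hurdle is juggling the divisibility/congruence constraints: both $P_1$ and $P_2$ must have vertex counts $\equiv \ell \pmod{k-\ell}$, and the total $m_1 + |V(P_2)| - 2\ell$ must equal $m = t(k-\ell)$. All of these reduce to congruences modulo $k-\ell$ that depend only on $k, \ell$, using $\ell \equiv k \pmod{k-\ell}$; they are handled by a constant-sized choice of the part size of $H$ and of $m_1$. Once these are set, since $P_2$ has constant size while $m$ is huge (via $1/t \ll \eps$), the contribution of $P_2$ shifts each colour-class proportion $\alpha_i$ by at most $O(1/m) \leq \eps$, delivering $\alpha_1 = \lambda(k,\ell) \pm \eps$ and $\alpha_2, \ldots, \alpha_k = (1-\lambda(k,\ell))/(k-1) \pm \eps$ as required.
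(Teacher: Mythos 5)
Your proposal is correct and follows essentially the same route as the paper: take an $\ell$-path of the appropriate length with the colouring from Lemma~\ref{lem:KMO-space}, then close it into a $k$-partite $\ell$-cycle via Proposition~\ref{prop:KMO-strongly-connected} applied to a complete $k$-partite graph with parts of size $k\ell(k-\ell)+1$, noting that the constantly many added vertices perturb the proportions by at most $O(1/m)\leq\eps$. The congruence bookkeeping you describe matches the paper's choice $b=(k\ell(k-\ell)+1)k-2\ell$ and $m_1=t(k-\ell)-b$.
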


\begin{proof}
	Let $b=(k\ell(k -\ell) + 1)k-2\ell$.
	Note that $b \equiv -\ell \bmod k-\ell$, so $t(k-\ell) - b \equiv k \bmod k-\ell$.
	Let $P$ be a $k$-uniform $\ell$-path on $t(k-\ell) - b$ vertices.
	The proper $k$-colouring of $P$ given by \cref{lem:KMO-space} yields parts $V_1, \dotsc, V_k$ of $V(P)$.
	Using \cref{prop:KMO-strongly-connected} and $b$ additional vertices, we can turn $P$ into a $k$-partite $\ell$-cycle $C$.
	By the choice of $t$, the vertices of $V(C) \setminus V(P)$ occupy a very small share of $V(C)$.
	So the part sizes of $C$ are essentially the same as those given by the partition of $P$.
	In particular, the bounds on $\alpha_1, \dots, \alpha_k$ follow for $t$ large enough.
\end{proof}

We also require the next result proved by the first author~\cite{Lan23}, which is also implicit in the work of Mycroft~\cite{Myc16}.
For a $k$-graph $F$, we define the threshold $\delta_d^{\frt}(F)$ as the infimum of $\delta \in [0,1]$ such that for every $\eps > 0$, there is $n_0$ such that every $k$-graph $G$ on $n \geq n_0$  vertices with $\delta_d(G)\geq (\delta + \eps ) \binom{n-d}{k-d}$ contains a perfect fractional $\{F\}$-tiling.

\begin{lemma}[{\cite[Proposition 5.7]{Lan23}}]\label{lem:fractional-thresholds-k-1}
	Let $F$ be a $k$-partite $m$-vertex $k$-graph with parts of size $m_1 \leq \dots \leq m_k$.
	Then $\delta_{k-1}^{\frt}(F) \leq m_1/m$.
\end{lemma}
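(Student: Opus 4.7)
The plan is to establish the lemma via LP duality. A perfect fractional $\{F\}$-tiling in a $k$-graph $G$ corresponds to a nonnegative solution of the linear system
\begin{align*}
\sum_{\phi \in \Hom(F,G)} \omega(\phi)\,\lvert\phi^{-1}(v)\rvert = 1 \quad \text{for all } v \in V(G).
\end{align*}
By Farkas' lemma, such a solution exists unless there is a ``dual certificate'' $\mu : V(G) \to \mathbb{R}$ with $\sum_v \mu(v) > 0$ but $\sum_v \mu(v)\,\lvert\phi^{-1}(v)\rvert \leq 0$ for every $\phi \in \Hom(F, G)$. So the task reduces to ruling out such a certificate under the hypothesis $\delta_{k-1}(G) \geq (m_1/m + \eps)(n-k+1)$.

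Given a hypothetical $\mu$, normalise so that $\tfrac{1}{n}\sum_v \mu(v) = 1$. Using the identity $\sum_v \mu(v)\,\lvert\phi^{-1}(v)\rvert = \sum_{x \in V(F)} \mu(\phi(x))$, it suffices to exhibit a probability distribution on $\Hom(F, G)$ for which
\begin{align*}
\mathbb{E}\biggl[\sum_{x \in V(F)} \mu(\phi(x))\biggr] = \sum_{x \in V(F)} \mathbb{E}[\mu(\phi(x))] > 0,
\end{align*}
contradicting the dual inequality pointwise. I would build such a distribution by sequentially embedding the parts $V_k, V_{k-1}, \ldots, V_1$ of the $k$-partite $F$ in decreasing order of size. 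When embedding a vertex $x \in V_i$, all $F$-edges incident to $x$ have their other $k-1$ vertices already placed in $V(G)$, forming a multiset of $(k-1)$-tuples; by the codegree hypothesis, the common $G$-neighbourhood of any such tuple has size at least $(m_1/m + \eps)n$. Sampling $\phi(x)$ from the intersection of these neighbourhoods (biased according to $\mu$) produces a valid extension. Placing the smallest part $V_1$ last is essential: its relative size $m_1/m$ exactly matches the worst-case available density guaranteed by $\delta_{k-1}(G)$.

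The main obstacle is showing $\mathbb{E}[\mu(\phi(x))] > 0$ for every $x \in V(F)$. A naive uniform sampling could yield negative expectation if $\mu$ happens to be concentrated away from the typical ``available'' neighbourhoods. The fix is to tilt the sampling distribution in favour of vertices with larger $\mu$-value: the codegree condition always leaves a slack of $\eps n$ vertices to absorb this tilt, and an inductive argument along the sequential embedding then propagates a ``near-uniform'' invariant on the marginals of $\phi(x)$, giving $\mathbb{E}[\mu(\phi(x))] = 1 - o(1)$. Summing over $x \in V(F)$ yields $\mathbb{E}[\sum_{x} \mu(\phi(x))] = m - o(m) > 0$, the desired contradiction. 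This is the standard LP-packing argument for $k$-partite fractional tilings, executed in detail in Mycroft~\cite{Myc16} and in the first author's prior work~\cite{Lan23}, to which the lemma is attributed.
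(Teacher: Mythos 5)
The paper itself gives no proof of this lemma: it is imported wholesale from \cite[Proposition 5.7]{Lan23}, so your argument has to stand on its own. Your Farkas/LP-duality reduction is correct and is indeed how such proofs begin, but the construction of the distribution on $\Hom(F,G)$ has two concrete problems. First, the sequential embedding is not well-founded. If the parts are embedded in the order $V_k,V_{k-1},\dots,V_1$, then for $x\in V_i$ with $i\ge 2$ every edge of $F$ through $x$ still contains unembedded vertices from $V_1,\dots,V_{i-1}$, so it is false that ``all $F$-edges incident to $x$ have their other $k-1$ vertices already placed''; that holds only for the last part. Even for $x\in V_1$, you need $\phi(x)$ in the intersection of the common neighbourhoods of all $\deg_F(x)$ embedded $(k-1)$-tuples, each of density only $m_1/m+\eps\le 1/k+\eps$; this intersection can be empty, so the procedure can stall without producing any homomorphism. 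The standard escape --- and the reason the statement is about homomorphisms rather than embeddings --- is to use only the \emph{collapsed} homomorphisms sending each part $V_i$ to a single vertex of a single edge of $G$: then one edge of $G$ suffices, the codegree hypothesis is invoked exactly once to choose its last vertex from a set of density $m_1/m+\eps$, and the homomorphism condition is automatic.

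Second, the quantitative claim $\mathbb{E}[\mu(\phi(x))]=1-o(1)$ for every $x$, and hence $\mathbb{E}[\sum_x\mu(\phi(x))]=m-o(m)$, is false. Let $B\subset V(G)$ have size $(m_1/m+\eps)n$, let $G$ consist of all $k$-sets meeting $B$ (this satisfies the codegree hypothesis), and let $\mu=-M$ on $B$ and be a positive constant elsewhere, normalised so that $\tfrac1n\sum_v\mu(v)=1$. The common neighbourhood of any $(k-1)$-set disjoint from $B$ is exactly $B$, so some image vertex of any homomorphism is forced into $B$ and has $\mu$-value $-M$; no tilting can make its marginal $1-o(1)$. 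The lemma is still true here, but only because the part of smallest size $m_1$ is assigned to the constrained position while the larger parts sit on vertices of large $\mu$-value, and $\sum_i m_{\sigma(i)}\mu(u_i)$ comes out positive with margin proportional to $\eps$ --- not because each marginal is near its average. This weighted balancing, which is where the hypothesis $m_1\le\dots\le m_k$ and the value $m_1/m$ actually enter, is precisely the step your sketch asserts rather than proves, and the invariant you propose to prove it with does not hold.
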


The proof of the following result relies on the work of Grosu and Hladk\'y~\cite{GH12}.

\begin{lemma}[{\cite[Lemma 6.9]{Lan23}}]\label{lem:fractional-thresholds-k-2}
	For $k \geq 3$, let $F$ be a $k$-partite $m$-vertex $k$-graph with parts of size $m_1 \leq \dots \leq m_k$.
	Then $\delta_{k-2}^{\frt}(F) \leq \max\{1-(1-m_1/m)^2,\,((m_1+m_2)/m)^2\}$.
\end{lemma}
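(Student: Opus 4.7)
Plan. The plan is to reduce to the complete $k$-partite case and then invoke LP duality together with the main density result of Grosu and Hladký. First, since $F$ is a $k$-partite $k$-graph with part sizes $m_1 \leq \dotsb \leq m_k$, it is a spanning subgraph of the complete $k$-partite $k$-graph $F^* = K^{(k)}_{m_1, \dotsc, m_k}$. Every homomorphism $F^*\to G$ is in particular a homomorphism $F\to G$, so $\Hom(F^*, G)\subseteq \Hom(F, G)$ and every perfect fractional $F^*$-tiling is also a perfect fractional $F$-tiling. Consequently $\delta_{k-2}^{\frt}(F) \leq \delta_{k-2}^{\frt}(F^*)$, and it suffices to prove the bound for $F=F^*$.

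Set $\delta = \max\{1-(1-m_1/m)^2,\,((m_1+m_2)/m)^2\}$ and consider the natural fractional-tiling LP, whose variables $\omega(\phi)\geq 0$ range over $\phi\in \Hom(F^*,G)$ and whose equality constraints read $\sum_\phi \omega(\phi)|\phi^{-1}(v)|=1$ for each $v \in V(G)$. By LP duality, this LP is feasible if and only if, for every weighting $w\colon V(G)\to\mathbb R_{\geq 0}$ with average $\bar w := n^{-1}\sum_v w(v)$, there exists a homomorphism $\phi\in\Hom(F^*,G)$ whose $w$-mass $\sum_v w(v)|\phi^{-1}(v)|$ is at most $m\bar w$. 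We look for such a ``light'' $\phi$ among the \emph{trivial} homomorphisms $\phi_{\hat e}$ indexed by ordered edges $\hat e=(v_1,\dotsc,v_k)$ of $G$, which send every vertex of the class $V_i$ in $F^*$ to $v_i$. Their $w$-mass equals $\sum_i m_i w(v_i)$, so the task reduces to finding an ordered edge of $G$ satisfying $\sum_i m_i w(v_i)\leq m\bar w$.

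The codegree hypothesis $\delta_{k-2}(G)\geq (\delta+\eps)\binom{n-k+2}{2}$ gives, for every $(k-2)$-set $S$, a link $2$-graph $L_G(S)$ of density at least $\delta+\eps$. The main result of Grosu and Hladký converts this density into the existence of a structured edge in the link, which together with a suitable choice of $S$ and an assignment of vertices to slots yields an ordered edge $\hat e$ of the required $w$-mass. The main obstacle lies precisely in this last step: because the bound is a maximum of two quantities, one has to simultaneously handle two distinct extremal configurations of $w$---one in which the heavy mass concentrates on a vertex set of density close to $1-m_1/m$, and another in which it concentrates on a set of density close to $1-(m_1+m_2)/m$. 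The Grosu--Hladký input supplies precisely the two-regime Turán-type lemma that captures both configurations and drives the argument.
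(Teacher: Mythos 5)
First, a point of reference: the paper does not prove this lemma at all --- it is imported verbatim from \cite{Lan23} (Lemma~6.9 there), so there is no in-paper argument to compare against. Judged on its own terms, your proposal sets up the right scaffolding but stops exactly where the lemma begins. The reduction to the complete $k$-partite graph $F^*$ is fine, the LP-duality reformulation (for every non-negative weighting $w$ with average $\bar w$, find a homomorphism of $w$-mass at most $m\bar w$) is standard and correct once one notes that $w$ may be normalised to be non-negative, and restricting attention to the collapsed homomorphisms $\phi_{\hat e}$, so that the task becomes finding an ordered edge with $\sum_i m_i w(v_i)\leq m\bar w$, is indeed how such bounds are usually established. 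But all of this is routine and makes no use of the value of $\delta$; none of it explains why $\max\{1-(1-m_1/m)^2,\,((m_1+m_2)/m)^2\}$ suffices.

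The genuine gap is the extremal argument you explicitly defer. Concretely, one must place the $k-2$ lightest vertices into the slots carrying the large coefficients $m_3\leq\dots\leq m_k$ and then find, in the link $2$-graph of that $(k-2)$-set (which has edge density at least $\delta+\eps$), an edge $\{x,y\}$ with $m_1w(x)+m_2w(y)$ small enough that the total mass is at most $m\bar w$. This rests on two separate density facts --- density above $1-(1-m_1/m)^2$ forces an edge inside every vertex set of size roughly $(1-m_1/m)n$, while density above $((m_1+m_2)/m)^2$ forces an edge leaving every set of size roughly $((m_1+m_2)/m)n$ --- together with a case analysis on the distribution of $w$ to decide which fact applies and to verify the resulting inequality. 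You never state which result of Grosu and Hladk\'y you invoke, what its hypotheses are, or how the two regimes of $w$ are resolved, so the argument cannot be checked at the one point where it could fail and where the specific form of the bound is actually produced. As written, this is a plan rather than a proof; to complete it you would need to state the Grosu--Hladk\'y input precisely and carry out the weight optimisation.
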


We obtain the following corollary, bounding $\delta_{d}^{\frct}(k, \ell)$ for $d = k-1$.

\begin{corollary}\label{cor:d=k-1}
	For $1 \leq \ell \leq k-2$ with $k-\ell \not \mid k$, we have $\delta_{k-1}^{\frct}(k, \ell) \leq \lambda(k,\ell)$.
\end{corollary}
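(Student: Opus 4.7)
The plan is to combine \cref{cor:space-cycles} with \cref{lem:fractional-thresholds-k-1} applied to a single carefully chosen $k$-partite $\ell$-cycle, and then observe that a fractional tiling by one such cycle is a special case of a fractional $\ell$-cycle tiling.

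More concretely, fix $\eps>0$ and pick an auxiliary $\eps'>0$ (much smaller than $\eps$). Choose $t$ large enough so that \cref{cor:space-cycles} applies with error parameter $\eps'$, and let $C$ be the resulting $k$-partite $k$-uniform $\ell$-cycle on $m=t(k-\ell)$ vertices whose part sizes $\alpha_1 m,\dots,\alpha_k m$ satisfy $\alpha_1=\lambda(k,\ell)\pm\eps'$ and $\alpha_j=(1-\lambda(k,\ell))/(k-1)\pm\eps'$ for $2\leq j\leq k$. The key numerical point is that, since $k-\ell\not\mid k$, we have $\lceil k/(k-\ell)\rceil(k-\ell)>k$, and hence $\lambda(k,\ell)<1/k<(1-\lambda(k,\ell))/(k-1)$. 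Therefore, for $\eps'$ small enough, $\alpha_1$ is genuinely the smallest part of $C$.

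Now I apply \cref{lem:fractional-thresholds-k-1} to $F=C$ to conclude that $\delta_{k-1}^{\frt}(C)\leq \alpha_1\leq \lambda(k,\ell)+\eps'$. By definition of the threshold, this means that for every sufficiently large $k$-graph $G$ with $\delta_{k-1}(G)\geq(\lambda(k,\ell)+\eps)\binom{n-(k-1)}{1}$, there is a perfect fractional $\{C\}$-tiling of $G$. Since $C$ is itself an $\ell$-cycle, any perfect fractional $\{C\}$-tiling is a perfect fractional $\ell$-cycle tiling in the sense of the definition given before \cref{thm:cycle-threshold=tiling-treshold} (one simply restricts the weight function $\omega$ from $\Hom(\{C\},G)$ to the family of homomorphisms of all $\ell$-cycles into $G$, extending by zero). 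Hence $\delta_{k-1}^{\frct}(k,\ell)\leq \lambda(k,\ell)+\eps$, and letting $\eps\to 0$ gives the claimed bound.

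The only mildly non-routine step is verifying that $\alpha_1$ really is the minimum part size, which is exactly where the hypothesis $k-\ell\not\mid k$ enters; everything else is essentially a bookkeeping chain assembling the earlier lemmata.
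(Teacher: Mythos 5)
Your argument is correct and follows essentially the same route as the paper's proof: construct the cycle $C$ via \cref{cor:space-cycles}, use $k-\ell\not\mid k$ to deduce $\lambda(k,\ell)<1/k<(1-\lambda(k,\ell))/(k-1)$ so that $\alpha_1$ is the smallest part, and then apply \cref{lem:fractional-thresholds-k-1}. Your extra remark that a perfect fractional $\{C\}$-tiling is a fortiori a perfect fractional $\ell$-cycle tiling is left implicit in the paper but is a harmless (indeed welcome) clarification.
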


\begin{proof}
	Given $1 \leq \ell \leq k-2$, let $\eps > 0$ be sufficiently small, and let $n_0=n_0(\eps)$ be sufficiently large.
	Let $G$ be a $k$-graph on $n \geq n_0$ vertices with $\delta_{k-1}(G) \geq (\lambda(k,\ell) + 2\eps) n$.
	By \cref{cor:space-cycles}, there is an $\ell$-cycle $C$ on $m=t\cdot (k-\ell)$ vertices with parts of size $\alpha_1 m\, \dots, \alpha_km$ such that $\alpha_1 = \lambda(k, \ell) \pm \eps$ and $\alpha_2, \dotsc, \alpha_k = (1 - \lambda(k, \ell))/(k-1) \pm \eps$.
	Recall the definition of $\lambda(k,\ell)$ in~\eqref{itm:lamda}.
	Since $k - \ell$ does not divide $k$, we obtain that $\lambda(k, \ell) < 1/k$.
	After rearranging, this gives $\lambda(k, \ell) < (1 - \lambda(k, \ell))/(k-1)$.
	Thus for $\eps$ small enough, $\alpha_1$ is the minimum over all $\alpha_i$.
	Then it follows by \cref{lem:fractional-thresholds-k-1} that $G$ has a perfect fractional  $\{C\}$-tiling.
\end{proof}

The next corollary handles the case $d = k-2$.

\begin{corollary}\label{cor:d=k-2}
	For $1 \leq \ell \leq k-2$ and $\eps > 0$, we have $\delta_{k-2}^{\frct}(k, \ell) \leq 1 - (1-\lambda(k,\ell))^2$.
\end{corollary}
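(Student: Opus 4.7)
The plan is to mirror the strategy from \cref{cor:d=k-1}: produce a specific $k$-partite $\ell$-cycle $C$ with almost-balanced part sizes via \cref{cor:space-cycles}, apply \cref{lem:fractional-thresholds-k-2} to bound $\delta_{k-2}^{\frt}(C)$, and then verify that the maximum appearing there collapses to $1 - (1-\lambda(k,\ell))^2$.

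To carry this out, fix $\eps > 0$ small and invoke \cref{cor:space-cycles} to obtain a $k$-partite $\ell$-cycle $C$ with part-size proportions $\alpha_1 = \lambda(k,\ell) \pm \eps$ and $\alpha_2, \dots, \alpha_k = (1-\lambda(k,\ell))/(k-1) \pm \eps$. Exactly as in \cref{cor:d=k-1}, the implicit hypothesis $k - \ell \not \mid k$ forces $\lambda(k,\ell) < 1/k < (1-\lambda(k,\ell))/(k-1)$, so for $\eps$ small enough $\alpha_1$ is the smallest part. Then \cref{lem:fractional-thresholds-k-2} gives
\begin{equation*}
\delta_{k-2}^{\frt}(C) \leq \max\bigl\{1-(1-\alpha_1)^2,\;(\alpha_1+\alpha_2)^2\bigr\},
\end{equation*}
and since any perfect fractional $\{C\}$-tiling is a perfect fractional $\ell$-cycle tiling, it suffices to bound this maximum by $1 - (1-\lambda(k,\ell))^2 + O(\eps)$ and then let $\eps \to 0$.

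The main obstacle is the purely algebraic verification that $(\alpha_1+\alpha_2)^2 \leq 1-(1-\alpha_1)^2$. Writing $\lambda = \lambda(k,\ell)$, substituting the values of $\alpha_1, \alpha_2$ and clearing denominators, this is equivalent to $f(\lambda) \geq 0$, where
\begin{equation*}
f(\lambda) := (k-1)^2 \lambda (2-\lambda) - \bigl((k-2)\lambda + 1\bigr)^2.
\end{equation*}
A direct computation factors $f$ as $f(\lambda) = (1-\lambda)\bigl((2k^2 - 6k + 5)\lambda - 1\bigr)$, so the task reduces to establishing the lower bound $\lambda \geq 1/(2k^2 - 6k + 5)$. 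To this end, I would write $k = q(k-\ell) + r$ with $r \in \{1, \dots, k-\ell-1\}$ (possible since $k - \ell \not\mid k$), which yields $\lceil k/(k-\ell)\rceil(k-\ell) = (q+1)(k-\ell) \leq 2k-2$ and hence $\lambda \geq 1/(2k-2)$. Since $1 \leq \ell \leq k-2$ forces $k \geq 3$, a routine check gives $2k - 2 \leq 2k^2 - 6k + 5$, from which $\lambda \geq 1/(2k^2-6k+5)$ follows and the plan closes.
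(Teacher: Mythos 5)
Your proposal is correct and follows exactly the paper's intended route: take the near-balanced $k$-partite $\ell$-cycle from \cref{cor:space-cycles}, apply \cref{lem:fractional-thresholds-k-2}, and check that the maximum is attained by $1-(1-\lambda)^2$, which the paper itself reduces to the inequality $\lambda(k,\ell) \geq 1/(2k^2-6k+5)$. Your factorisation $f(\lambda)=(1-\lambda)\bigl((2k^2-6k+5)\lambda-1\bigr)$ and the bound $\lambda \geq 1/(2k-2)$ are both verified to be correct, so you have in fact supplied the ``further details'' that the paper omits.
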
 

The proof is very similar to the one of \cref{cor:d=k-1}, now applying \cref{lem:fractional-thresholds-k-2} in place of \cref{lem:fractional-thresholds-k-1}.
The key technical point to apply \cref{lem:fractional-thresholds-k-2} is to verify that, for all $k \geq 3$ and $1 \leq \ell \leq k-2$, the inequality $1-(1-\lambda(k, \ell))^2 > (\lambda(k, \ell) + (1 - \lambda(k, \ell))/(k-1))^2$ holds.
This can be seen to be equivalent to $\lambda(k, \ell) \geq 1/(2k^2 - 6k + 5)$, which indeed holds for all $1 \leq \ell \leq k-2$.
We omit further details.

\subsection{Proof of the applications}
 
Now we derive \cref{thm:d=k-1,thm:d=k-2,thm:cycle-threshold=tiling-treshold} from \cref{cor:framework-simple}.

\begin{proof}[Proof of \cref{thm:cycle-threshold=tiling-treshold}]
	Given $1 \leq \ell < d \leq k-1$ with $k-\ell \not \mid k$, set $\delta = \delta_{d}^{\frct}(k, \ell)$.
	By \cref{cor:framework-simple}, it suffices to prove that $\delta_d^{\hf}(k, \ell) \leq \delta$.
	Let $\eps > 0$, and let $n$ be sufficiently large with respect to~$k$ and $1/\eps$.
	For any $k$-graph $G$, we set $F(G) = G$.
	We claim that this choice of $F$ is a Hamilton $\ell$-framework for $k$-graphs in $\DegF{d}{\delta + \eps}$.
	It suffices to check properties~\ref{itm:hf-connected}, \ref{itm:hf-matching} and \ref{itm:hf-intersecting} of \cref{def:hamilton-framework}.
	
	Let $G$ and $G'$ be $n$-vertex $k$-graphs in $\DegF{d}{\delta+ \eps}$ that differ in at most one vertex.
	Property~\ref{itm:hf-connected} follows by \cref{lem:connecitivty}.
	Property~\ref{itm:hf-matching} follows by definition of $\delta = \delta_{d}^{\frct}$, and the sufficiently large choice of $n$ with respect to $k$ and $\eps$.
	Finally, property~\ref{itm:hf-intersecting} follows since every $\ell$-set of $G$ is covered by an edge of $G$ and every $\ell$-set of $G'$ is covered by an $\ell$-set of $G'$ (and $|V(G) \cap V(G')| \geq \ell$ since $n \geq \ell +1)$.
\end{proof}

\begin{proof}[Proof of \cref{thm:d=k-1}]
	Immediate consequence of \cref{cor:d=k-1} and \cref{thm:cycle-threshold=tiling-treshold}.
\end{proof}
 
\begin{proof}[Proof of \cref{thm:d=k-2}]
	\cref{cor:d=k-2} implies that $\delta_{k-2}^{\frct}(k, \ell) \leq 1 - (1 - \lambda(k, \ell))^2$ for all $1 \leq \ell \leq k-2$.
	Together with \cref{thm:cycle-threshold=tiling-treshold} this yields the result for $\ell \leq k-3$. 
	So from now on assume that $\ell = k-2$.
	Set $\delta = \max\{ 1/4,\, 1- (1-\lambda(k,k-2))^2\}$.
	By \cref{cor:framework-simple}, it suffices to prove that $\delta_d^{\hf}(k, k-2) \leq \delta$.
	Given $\eps > 0$, let $n$ be sufficiently large with respect to $k$ and $1/\eps$. 
	For any $k$-graph~$G$, we set $F(G) = G$.
	We claim that this choice of $F$ is a Hamilton $(k-2)$-framework for $k$-graphs in $\DegF{d}{\delta + \eps}$, which suffices to finish the proof.
	
	Let $G$ and $G'$ be $n$-vertex $k$-graphs in $\DegF{d}{\delta + \eps}$ that differ in at most one vertex.
	We check properties~\ref{itm:hf-connected}, \ref{itm:hf-matching} and \ref{itm:hf-intersecting} of \cref{def:hamilton-framework}.
	Property~\ref{itm:hf-connected} follows by \cref{lem:connecitivty-graphs}.
	Property~\ref{itm:hf-matching} follows from $\delta \geq \delta_{k-2}^{\frct}(k, k-2)$ and the choice of $n$.
	Finally, property~\ref{itm:hf-intersecting} follows since every $\ell$-set of $G$ is covered by an edge of $G$ and every $\ell$-set of $G'$ is covered by an $\ell$-set of $G'$ (and $|V(G) \cap V(G')| \geq \ell$ since $n \geq \ell +1)$.
\end{proof}

\section{Proof of the main result}\label{sec:proof-main-result}

The goal of this section is to show \cref{thm:framework-hamilton-cycle}.
To this end, we first reformulate the input assumptions in terms of bounded hypergraphs.

\subsection{Preprocessing}

Recall the definitions of $k$-bounded hypergraphs in \cref{sec:connectedness-results}.
{Given a $[k]$-graph~$G$} and an edge $e \in G^{(\ell)}$ with $1 \leq \ell \leq k-1$, we denote by $\tc(e)$ the \tight $\ell$-component comprising all $k$-edges which contain $e$.
The \emph{$\ell$-adherence} $\adh_\ell(G) \subset G^{(k)}$ is obtained by taking the union of the \tight $\ell$-components $\tc(e)$ over all $e \in G^{(\ell)}$.

\begin{definition}[Connectivity]
	For $1 \leq \ell \leq k-1$, let $\dcon_\ell$ be the set of $[k]$-graphs~$G$ such that $\adh_\ell(G)$ is a single vertex-spanning \tight $\ell$-component.
\end{definition}

\begin{definition}[Space]
	For a $k$-graph $F$, let $\dspa_\ell$ be the set of $[k]$-graphs $G$ with $k\geq 2$ such that $\adh_\ell(G)$ has a perfect fractional $\ell$-cycle tiling.
\end{definition}

Next, we recover the concepts regarding subsampling for the $k$-bounded setting.

\begin{definition}[Property graph]\label{def:property-graph-digraph}
	For a $[k]$-graph $G$ and a family of $[k]$-graphs $\P$, the \emph{property graph}, denoted by $\PG{G}{\P}{s}$, is the $s$-graph on vertex set $V(G)$ with an edge $S \subset V(G)$ whenever the induced subgraph $G[S]$ \emph{satisfies}~$\P$, that is $G[S] \in \P$.
\end{definition}

The following definition of robustness corresponds to bounded hypergraphs.

\begin{definition}[Robustness]\label{pdef:robustness-detailed}
	For a family of $[k]$-graphs $\P$, a $[k]$-graph $G$ \emph{$(\delta,r,s)$-robustly satisfies}~$\P$ if the minimum $r$-degree of the property $s$-graph $\PG{G}{  \P   }{s}$ is at least $\delta  \tbinom{n-r}{s-r}$.
\end{definition}

We abbreviate $(1-1/s^2,r,s)$-robustness to \emph{$(r,s)$-robustness}.
Moreover, \emph{$s$-robustness} is short for $(r,s)$-robustness with $r=2k$ as in the context of \cref{pdef:robustness-detailed}.

We process the conditions of \cref{thm:framework-hamilton-cycle} using the next result, which we prove in \cref{sec:transition}.

{\begin{lemma}[Transition] \label{lem:transition}
	Let $1/\ell > 1/k,\, 1/s_1 \gg 1/s_2 \gg 1/n$ with $k-\ell \not \mid k$.
	Let $\cG$ be a family of $s_1$-graphs that admits a Hamilton $\ell$-framework.
	Let $G$ be a $k$-graph on $n$ vertices that $s_1$-robustly satisfies $\cG$.
	Then there is an $n$-vertex $[k]$-graph $G' \subset G \cup \partial_\ell(G)$ that $s_2$-robustly satisfies $\dcon_\ell \cap \dspa_\ell$.
\end{lemma}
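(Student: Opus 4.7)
The plan is to build $G'$ directly from the framework data. For each ``good'' $s_1$-subset $T \subseteq V(G)$ --- meaning one with $G[T] \in \cG$ --- let $F_T := F(G[T]) \subseteq G[T]$ be the framework subgraph guaranteed by the Hamilton $\ell$-framework. I will set
\begin{align*}
	G'^{(k)} := \bigcup_{T \text{ good}} F_T \qquad\text{and}\qquad G'^{(\ell)} := \bigcup_{T \text{ good}} \partial_\ell(F_T),
\end{align*}
so that $G' \subseteq G \cup \partial_\ell(G)$ automatically. The target is then to show that for every $2k$-subset $R \subseteq V(G)$, almost all $s_2$-supersets $S \supseteq R$ satisfy $G'[S] \in \dcon_\ell \cap \dspa_\ell$.

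The first step I would carry out is an inheritance argument in the spirit of \cref{lem:inheritance-minimum-degree}: using the assumed minimum $2k$-degree of the property $s_1$-graph $P := \PG{G}{\cG}{s_1}$ together with a Chernoff bound, almost every $s_2$-superset $S$ of $R$ has the property that the restriction $P[S]$ has minimum $2k$-degree very close to $\binom{s_2 - 2k}{s_1 - 2k}$. This inherited property is the central input for the remaining arguments: it gives both that every vertex $v \in S$ lies in many good $s_1$-subsets $T \subseteq S$, and that any two good $T, T' \subseteq S$ can be joined by a sequence $T = T_0, T_1, \ldots, T_m = T'$ of good $s_1$-subsets of $S$ with $|T_i \triangle T_{i+1}| = 2$. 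The hierarchy $1/s_1 \gg 1/s_2 \gg 1/n$ leaves ample room for the probabilistic bounds required.

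For the connectivity part, each $v \in S$ lies in some good $T \subseteq S$, hence in $V(F_T) = T$ (an $\ell$-component by (F1) whose entire shadow $\partial_\ell(F_T)$ is contained in $G'^{(\ell)}[S]$, since $T \subseteq S$), so $v \in V(\adh_\ell(G'[S]))$. For consecutive $T_i, T_{i+1}$ I apply the consistency property (F3) to the $(s_1 + 1)$-vertex $k$-graph $H := G[T_i \cup T_{i+1}]$ with $x \in T_{i+1} \setminus T_i$ and $y \in T_i \setminus T_{i+1}$, yielding that $F_{T_i} \cup F_{T_{i+1}}$ is $\ell$-connected. Chaining across the sequence places all these $F_T$'s into a common $\ell$-component of $G'^{(k)}[S]$, which is witnessed inside $\adh_\ell(G'[S])$ by any $\ell$-edge of $\partial_\ell(F_T) \subseteq G'^{(\ell)}[S]$. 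This delivers $\dcon_\ell$.

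For the space part, each $F_T$ carries a perfect fractional $\ell$-cycle tiling $\omega_T$ by (F2). I would assemble these via nonnegative coefficients $(c_T)$, indexed over good $T \subseteq S$, satisfying $\sum_{T \ni v} c_T = 1$ for every $v \in S$; then $\omega := \sum_T c_T \omega_T$ is a perfect fractional $\ell$-cycle tiling of $\adh_\ell(G'[S])$, as each homomorphism used lies in some $F_T \subseteq \adh_\ell(G'[S])$. Such a $(c_T)$ is exactly a fractional perfect matching in the almost-complete $s_1$-graph $P[S]$, and exists by an elementary LP argument: start from the uniform weights $c_T = 1/\binom{s_2-1}{s_1-1}$ (which cover every vertex with load $\geq 1 - O(1/s_1^2)$) and correct the small deficit by a perturbation that uses the rich overlap among good $s_1$-subsets. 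I expect this fractional-matching assembly, together with ensuring that the inheritance conclusion holds for almost every $S$ simultaneously, to be the main technical hurdle; the rest then follows mechanically from (F1)--(F3).
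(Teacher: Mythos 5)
Your overall architecture matches the paper's (union the framework subgraphs $F(G[T])$ over good $s_1$-sets $T$, inherit density of the property graph into $S$, chain via the consistency condition (F3), and assemble the fractional tilings via a fractional matching), but there are two genuine gaps, both at the $\ell$-uniform level and in the connectivity step. First, you place \emph{all} of $\bigcup_T \partial_\ell(F_T)$ into $G'^{(\ell)}$. After restricting to an $s_2$-set $S$, the graph $G'[S]$ then contains $\ell$-edges $e$ whose only witnesses $T$ (with $e \in \partial_\ell(F_T)$) lie mostly or entirely outside $S$. For such an $e$ the definition of $\dcon_\ell$ still demands that $\tc(e)$, computed inside $G'[S]$, be the single spanning component --- but there may be no $k$-edge of $G'^{(k)}[S]$ containing $e$ at all, or only stray $k$-edges inherited from some $F_{T''}$ with $T'' \not\subseteq S$ that are not $\ell$-connected to your main component $J$. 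Your argument never addresses these $e$. The paper avoids this by only promoting \emph{well-connected} $\ell$-sets (those appearing in $\partial_\ell(F_T)$ for an $\eps$-fraction of all good $T$) to $G'^{(\ell)}$, and by building an extra clause into the definition of the property $s_2$-graph guaranteeing that every such $e \subseteq S$ is witnessed by many good $T \subseteq S$ --- enough that at least one witness lands in the distinguished dense component used to define $J$.

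Second, your claim that any two good $T, T' \subseteq S$ are joined by a chain of good $s_1$-sets with $|T_i \triangle T_{i+1}| = 2$ does not follow from the inherited minimum $2k$-degree of $P[S]$. That degree condition still permits roughly a $1/s_1^2$ fraction of the $s_1$-sets through any fixed $2k$-set to be bad, which is vastly more than the degree $\approx s_1 s_2$ of the exchange (Johnson) graph; one can therefore isolate good sets, and the good sets need not form a single swap-connected family. If they split, your $J$ splits with them, and both the single-component and the vertex-spanning conclusions fail. The paper's fix is not to prove global swap-connectivity but to extract \emph{one} $(s_1-1)$-component $Q'_S \subseteq Q[S]$ of large minimum $\ell$-degree via \cref{lem:minimum-degree-component}, define $J$ only from the $F_T$ with $T \in Q'_S$, use the positive minimum $\ell$-degree of $Q'_S$ to get vertex-spanning, and use the well-connectedness counting to pull every $\ell$-edge of $G'[S]$ into that one component. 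Repairing your proof essentially requires importing both of these devices.
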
}

It is convenient to work with properties that survive the deletion of a few vertices.
For a $[k]$-graph family $\P$, denote by $\Del_q(\P)$ the set of graphs $H \in \P$ such that $H-X \in \P$ for every set $X \subset V(H)$ of at most $q$ vertices.
As it turns out, one can strengthen the properties ${\dcon_\ell}$ and ${\dspa_\ell}$ of \cref{lem:transition} against vertex deletion.
 
The next result is adapted from our former work~{\cite[Lemma 6.4]{LS24a}}.

\begin{lemma}[Booster]\label{lem:booster}
	Let $1/k,\, 1/s_1,\, 1/r_2, \, \eps, \, 1/q \gg 1/s_2 \gg 1/n$.
	Set $r_1=2k-2$ and $\delta_1 = {1-1/s + \eps}$ and $\delta_2 = 1-\exp(-\sqrt{s_2})$.
	Then every $[k]$-graph on $n$ vertices that $(\delta_1,r_1,s_1)$-robustly satisfies $\dcon_\ell \cap \dspa_\ell$ also $(\delta_2,r_2,s_2)$-robustly satisfies $\Del_q(\dcon_\ell \cap \dspa_\ell)$.
\end{lemma}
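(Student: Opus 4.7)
The plan is to fix an arbitrary $r_2$-set $R_2 \subset V(G)$ and to prove that a uniformly random $s_2$-superset $S$ of $R_2$ satisfies $G[S] \in \Del_q(\dcon_\ell \cap \dspa_\ell)$ with probability at least $1 - \exp(-\sqrt{s_2})$. The argument splits into a probabilistic step, which transfers the ambient $(\delta_1, r_1, s_1)$-robustness into an analogous robustness internal to $S$, and a structural lifting step, which converts this internal robustness into the $\Del_q$-property.

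\emph{Probabilistic step.} First, I would use Chernoff-type concentration for hypergeometric variables to show that, with probability at least $1 - \exp(-\sqrt{s_2})$, the induced graph $G[S]$ has the following internal robustness: for every $r_1$-set $R_1 \subset S$ and every $X \subset S$ with $|X| \leq q$, at least a $(1 - 1/s_1)$-fraction of the $s_1$-supersets of $R_1$ contained in $S \setminus X$ lie in $\dcon_\ell \cap \dspa_\ell$. Conditional on $R_1 \subset S$, the $s_1$-supersets of $R_1$ in $S \setminus X$ behave like a uniform sub-sample of the $s_1$-supersets of $R_1$ in $V(G)$, whose good-fraction is at least $1 - 1/s_1 + \eps$ by hypothesis; the $\eps$-slack absorbs any hypergeometric deviation with failure probability $\exp(-\Omega(s_2^{s_1 - r_1}))$. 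Union-bounding over the at most $s_2^{r_1 + q}$ relevant pairs $(R_1, X)$ preserves the $\exp(-\sqrt{s_2})$ bound by the hierarchy.

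\emph{Structural step.} Next, I would prove a deterministic lifting lemma: if $T \subset V(G)$ satisfies $|T| \geq s_2 - q \gg s_1$ and for every $r_1$-set $R_1 \subset T$ at least a $(1 - 1/s_1)$-fraction of the $s_1$-supersets of $R_1$ inside $T$ are in $\dcon_\ell \cap \dspa_\ell$, then $G[T] \in \dcon_\ell \cap \dspa_\ell$. For \textbf{connectivity}, any two $k$-edges of $\adh_\ell(G[T])$ can be joined by a short chain of overlapping $\ell$-shadow edges whose consecutive pairs fit inside some good $s_1$-subset of $T$; piecing together the local $\ell$-components yields that $\adh_\ell(G[T])$ is a single vertex-spanning $\ell$-component. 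For \textbf{space}, I would average the perfect fractional $\ell$-cycle tilings of the good $s_1$-supersets of $T$, weighted uniformly and normalised so that every vertex of $T$ receives total weight $1$, to produce a perfect fractional $\ell$-cycle tiling on $\adh_\ell(G[T])$; the $1/s_1$-share of defective $s_1$-subsets is handled using the $\eps$-slack. Applying the lifting lemma to every $T = S \setminus X$ with $|X| \leq q$ then yields $G[S] \in \Del_q(\dcon_\ell \cap \dspa_\ell)$.

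\emph{Main obstacle.} The crux is the space half of the lifting lemma: combining many small fractional $\ell$-cycle tilings into a single tiling on $G[T]$ requires a delicate weight-balancing argument, since the bad $s_1$-subsets introduce small irregularities that must be smoothed out by the $\eps$-slack. I expect the argument to parallel the tight-cycle booster of~\cite[Lemma~6.4]{LS24a}, with $(k-1)$-adherence there replaced by $\ell$-adherence here; the transition from the tight to the loose setting should not introduce fundamentally new difficulties but will require carefully adapting the same averaging framework to the $\ell$-cycle regime.
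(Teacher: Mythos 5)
Your overall architecture coincides with the paper's: first transfer the ambient $(\delta_1,r_1,s_1)$-robustness to the random $s_2$-set by concentration (the paper does this by forming the property graph $Q = \PG{P}{\DegF{r_1}{\delta+\eps/2}}{s_2}$ of the $s_1$-uniform property graph $P=\PG{G}{\dcon_\ell\cap\dspa_\ell}{s_1}$ and invoking \cref{lem:inheritance-minimum-degree}), then lift connectivity by observing that two disjoint $\ell$-sets span at most $2\ell\leq r_1$ vertices and hence lie in a single common good $s_1$-set whose induced graph is already $\ell$-connected (your ``chain'' is not even needed), and finally assemble the local perfect fractional $\ell$-cycle tilings. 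The probabilistic step and the connectivity half of your lifting lemma are fine modulo routine details.

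The gap is in the space half, which you correctly identify as the crux but do not resolve. ``Weighted uniformly and normalised so that every vertex of $T$ receives total weight $1$'' is not a coherent operation: if every good $s_1$-set's tiling receives the same weight $w$, then a vertex $v$ lying in $N_v$ good $s_1$-sets receives total weight $wN_v$, and the quantities $N_v$ need not be equal across $T$; rescaling per vertex afterwards destroys the property of being a fractional $\ell$-cycle tiling. The paper's mechanism --- and the entire reason the threshold $\delta_1 = 1-1/s_1+\eps$ appears in the statement --- is to view the good $s_1$-subsets of $T$ as an $s_1$-uniform hypergraph $P[T]$ and to take a \emph{perfect fractional matching} of $P[T]$, which exists by \cref{obs:fractional-threshold} (ultimately the Daykin--Häggkvist bound, \cref{fact:matchingthresholds}) precisely because the density of good $s_1$-supersets of every $r_1$-set exceeds $1-1/s_1$. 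That fractional matching is the non-uniform weighting you are missing: composing it with the perfect fractional $\ell$-cycle tiling inside each good $s_1$-set gives every vertex of $T$ total weight exactly $1$. Without this (or an equivalent) ingredient, the $\eps$-slack alone cannot ``smooth out'' the imbalance, and the space argument does not close.
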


We prove \cref{lem:booster} in \cref{sec:boosting}.
To place \cref{lem:booster} in the context of \cref{def:robustness,pdef:robustness-detailed}, we recall the following fact due to Daykin and Häggkvist~\cite{DH81}, which follows from a simple double counting argument (see also~\cite[Lemma B.2]{LS24b}).

\begin{theorem}\label{fact:matchingthresholds}
	For $1/k,\, \eps \gg 1/n$ with $n$ divisible by $k$, every $k$-graph $G$ on $n$ vertices with $\delta_1(G) \geq (1-1/s +\eps) \binom{n-1}{k-1}$ contains a perfect matching. 
\end{theorem}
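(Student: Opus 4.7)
The plan is to argue by contradiction: suppose $G$ satisfies the degree hypothesis but admits no perfect matching. Let $M$ be a maximum matching of $G$, and let $U = V(G) \setminus V(M)$ be the set of unmatched vertices. Since $n$ is divisible by $k$ and $M$ is not perfect, we have $|U| \geq k$; moreover, by the maximality of $M$ no $k$-subset of $U$ is an edge of $G$, since otherwise $M$ could be enlarged.

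The heart of the argument is a double count of the non-edges of $G$. Summing the $1$-degree assumption over all vertices gives $k\cdot e(G) \geq n(1-1/s+\eps)\binom{n-1}{k-1}$, hence $e(G)\geq (1-1/s+\eps)\binom{n}{k}$, and the number of non-edges of $G$ is at most $(1/s - \eps)\binom{n}{k}$. On the other hand, the structure of a maximum matching forces many non-edges: not only are all $\binom{|U|}{k}$ $k$-subsets of $U$ non-edges, but for each $e_i \in M$ and each $u \in U$ several $k$-sets of the form $(e_i \setminus \{v\}) \cup \{u\}$ with $v \in e_i$ must also be non-edges, since otherwise a swap would replace $e_i$ by such an edge and, composed with a further swap absorbing the vertex freed up into another $(k-1)$-tuple from $U$, would produce a matching strictly larger than~$M$. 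Once $|U|\geq k$, these forced non-edges aggregate to a count strictly greater than $(1/s - \eps)\binom{n}{k}$, contradicting the upper bound and forcing $M$ to be perfect.

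The main obstacle is the bookkeeping of augmenting configurations in the $k$-uniform setting, which is markedly more delicate than in graphs: a single swap does not enlarge $M$, so one must control compositions of swaps to witness an actual augmentation. The plan is to follow the streamlined Daykin--Häggkvist-style double count recalled in~\cite[Lemma B.2]{LS24b}, which organises the swap analysis in a way that makes the gap between the two non-edge counts appear at the correct order $\Theta(\eps\binom{n}{k})$, thereby closing the argument.
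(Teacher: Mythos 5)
The overall shape you chose --- take a maximum matching $M$ with uncovered set $U$, observe $|U|\ge k$ by divisibility, and double count non-edges against the bound $e(G)\ge(1-1/k+\eps)\binom{n}{k}$ extracted from the degree hypothesis --- is indeed the Daykin--H\"aggkvist route, and it is consistent with what the paper does: the paper gives no proof of its own, quoting the statement as a known fact and deferring the details to the cited Lemma~B.2 of the earlier work. (Note also that the ``$s$'' in the displayed degree bound is a typo for the uniformity $k$; your derivation of the non-edge budget $(1/k-\eps)\binom{n}{k}$ is correct once that is fixed.) So the setup half of your plan is sound.

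The gap is in the quantitative claim that your forced non-edges ``aggregate to a count strictly greater than $(1/s-\eps)\binom{n}{k}$ once $|U|\ge k$''. Every non-edge your augmenting configurations can certify has one of three restricted shapes: a $k$-set inside $U$, a set $(e_i\setminus\{v\})\cup\{u\}$ with $k-1$ of its vertices in a single matching edge, or (from the ``further swap'') a set $\{v\}\cup T$ with $k-1$ of its vertices in $U$. The total number of $k$-sets of these shapes is at most $\binom{|U|}{k}+k n|M|+n\binom{|U|}{k-1}=O\bigl(n^2+n\binom{|U|}{k-1}\bigr)$, which cannot exceed the budget $(1/k-\eps)\binom{n}{k}=\Theta(n^k)$ unless $|U|$ is a constant fraction of $n$ (for $k=3$, one needs well over $n/3$ uncovered vertices even granting every such set is a non-edge). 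In the genuinely hard regime --- $M$ near-perfect, $|U|$ as small as $k$ --- your configurations force only $O(n)$ non-edges against an allowance of $\Theta(n^k)$, so no contradiction arises. The actual argument must manufacture forced non-edges among \emph{generic} $k$-sets, e.g.\ by showing that for an uncovered vertex $u$ and a positive proportion of all $(k-1)$-sets $T$ (necessarily spread across many edges of $M$), the presence of $T\cup\{u\}$ in $G$ would trigger a multi-edge augmentation; configurations whose variable part is confined to one edge of $M$ or to $U$ are structurally incapable of reaching the right order of magnitude. Deferring ``the bookkeeping'' to the cited reference does not repair this, because the specific swaps you commit to are the wrong ones, and the step where the two counts are supposed to collide is precisely where all the content of the theorem lies.
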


Finally, we recall the monotone behaviour of minimum degrees, which allows us to transition between different degree types.
\begin{fact}\label{fact:monotone-degrees}
	For an $n$-vertex $k$-graph $G$ and $d \leq d'$, we have ${\delta_d(G)}/{\binom{n-d}{k-d}} \geq  {\delta_{d'}(G)}/{\binom{n-d'}{k-d'}}.$
\end{fact}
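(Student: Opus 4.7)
The plan is a standard double-counting argument around an arbitrary fixed $d$-set. Fix $S \subset V(G)$ with $|S|=d$, write $\deg_G(S)$ for the number of edges of $G$ containing $S$, and count the flags $(S',e)$ satisfying $S \subset S' \subset e$, $|S'|=d'$ and $e \in G$.

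Counting flags by the edge $e$ first, each of the $\deg_G(S)$ edges through $S$ contributes exactly $\binom{k-d}{d'-d}$ choices of $S'$. Counting instead by $S'$, each of the $\binom{n-d}{d'-d}$ choices of $S' \supset S$ with $|S'|=d'$ contributes $\deg_G(S') \geq \delta_{d'}(G)$ edges. Equating the two tallies and then minimising over $S$ yields
\[
\delta_d(G) \, \binom{k-d}{d'-d} \;\geq\; \binom{n-d}{d'-d}\, \delta_{d'}(G).
\]

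All that remains is to identify the factor $\binom{n-d}{d'-d}/\binom{k-d}{d'-d}$ with $\binom{n-d}{k-d}/\binom{n-d'}{k-d'}$, which is the classical subset-of-subset identity
\[
\binom{n-d}{k-d}\binom{k-d}{d'-d} \;=\; \binom{n-d}{d'-d}\binom{n-d'}{k-d'};
\]
both sides count chains $S' \subset T$ in an $(n-d)$-vertex ground set with $|S'|=d'$ and $|T|=k$, either by choosing $T$ first and then $S'$ inside it, or $S'$ first and then $T$ extending it. Substituting this identity into the displayed inequality gives exactly the claimed bound after dividing through by $\binom{n-d}{k-d}$.

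I do not anticipate any obstacle: the proof is a one-line double count combined with a standard binomial identity, and no hypothesis on $G$ beyond the minimum-degree bound is used.
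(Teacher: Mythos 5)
Your double-counting argument is correct and is the standard proof of this monotonicity fact, which the paper states without proof. The only cosmetic slip is in your description of the chain-counting identity: the chains live over the $(n-d)$-element ground set $V(G)\setminus S$ and should have sizes $|S'\setminus S|=d'-d$ and $|T\setminus S|=k-d$ (equivalently, supersets of $S$ of sizes $d'$ and $k$ inside $V(G)$), but the identity $\binom{n-d}{k-d}\binom{k-d}{d'-d}=\binom{n-d}{d'-d}\binom{n-d'}{k-d'}$ you invoke is nonetheless exactly right and the substitution goes through.
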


{Using these observations, the following corollary of \Cref{lem:booster} is immediate from \cref{pdef:robustness-detailed} (and the definitions afterwards).}

\begin{corollary}[Booster]\label{corollary:booster2}
	Let $1/k,\, 1/s_1,\, 1/q,\, 1/r_2 \gg 1/s_2 \gg 1/n$,
	and set $\delta = 1 - \exp(-\sqrt{s_2})$.
	Every $[k]$-graph on $n$ vertices that $s_1$-robustly satisfies $\dcon_\ell \cap \dspa_\ell$ also $(\delta, r_2, s_2)$-robustly satisfies $\Del_q(\dcon_\ell \cap \dspa_\ell)$. \hfill \qedsymbol
\end{corollary}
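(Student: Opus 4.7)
The plan is to unpack the abbreviations surrounding $s$-robustness and then invoke \cref{lem:booster} as a black box; as the statement already signals, this should be purely bookkeeping. The starting hypothesis is that $G$ is $s_1$-robust for $\dcon_\ell \cap \dspa_\ell$, which, by the conventions stated right after \cref{pdef:robustness-detailed}, translates to the property $s_1$-graph $P = \PG{G}{\dcon_\ell \cap \dspa_\ell}{s_1}$ having minimum $2k$-degree at least $(1 - 1/s_1^2)\binom{n-2k}{s_1 - 2k}$.

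The first step is to descend from a $2k$-degree condition on $P$ to a $(2k-2)$-degree condition, which is what \cref{lem:booster} consumes via its $r_1 = 2k - 2$. This is immediate from \cref{fact:monotone-degrees} applied to $P$: since $2k - 2 \leq 2k$, the normalised minimum $(2k-2)$-degree of $P$ is at least its normalised minimum $2k$-degree, hence at least $1 - 1/s_1^2$. Next I would fix an auxiliary $\eps > 0$, for instance $\eps = 1/(2s_1)$, so that on the one hand $1 - 1/s_1 + \eps \leq 1 - 1/s_1^2$ for $s_1$ sufficiently large, and on the other hand $\eps \geq 1/(2s_1) \gg 1/s_2$ is compatible with the hierarchy $1/k,\,1/s_1,\,1/r_2,\,\eps,\,1/q \gg 1/s_2 \gg 1/n$ required by \cref{lem:booster}. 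With these choices, $G$ meets the $(\delta_1, r_1, s_1)$-robustness hypothesis of \cref{lem:booster} for $\dcon_\ell \cap \dspa_\ell$, where $\delta_1 = 1 - 1/s_1 + \eps$ and $r_1 = 2k - 2$, and the conclusion of that lemma is exactly the $(\delta, r_2, s_2)$-robust satisfaction of $\Del_q(\dcon_\ell \cap \dspa_\ell)$ with $\delta = 1 - \exp(-\sqrt{s_2})$ that the corollary asserts.

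There is no genuine obstacle here; the only point requiring care is that the auxiliary $\eps$ must simultaneously lie far enough above $1/s_2$ to respect \cref{lem:booster}'s hierarchy and far enough below $1/s_1 - 1/s_1^2$ to weaken the $(1 - 1/s_1^2)$-bound in hand to the $(1 - 1/s_1 + \eps)$-bound \cref{lem:booster} expects. The choice $\eps = 1/(2s_1)$ clears both constraints with room to spare given the hierarchy $1/s_1 \gg 1/s_2$ stipulated in the corollary.
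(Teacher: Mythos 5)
Your proposal is correct and matches the paper's intended derivation: the paper treats this corollary as immediate from \cref{lem:booster} after using \cref{fact:monotone-degrees} to pass from the $2k$-degree condition in the definition of $s_1$-robustness to the $(2k-2)$-degree condition that \cref{lem:booster} requires, together with exactly the kind of choice of auxiliary $\eps$ (small enough that $1-1/s_1+\eps\leq 1-1/s_1^2$, yet still $\gg 1/s_2$) that you make explicit.
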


\subsection{Covering with blow-ups}\label{sec:blow-up-cover-statments}

In the following, we introduce the main technical instrument for the proof of \cref{thm:framework-hamilton-cycle}, which allows us to cover the vertex set of the input graph with a chain of blow-ups, whose reduced graphs inherit the framework properties.

Consider a \emph{set family} $\cV$, which is, by convention of this paper, a family of pairwise disjoint sets.
A set $X$ is \emph{$\cV$-partite} if it has at most one vertex in each part of $\cV$.
We say that $\cV$ is \emph{$m$-balanced} if $|V| = m$ for  every $V \in \cV$.
Similarly, $\cV$ is \emph{$(1\pm\eta)m$-balanced} if $(1-\eta)m \leq |V| \leq  (1 + \eta) m$ for  every $V \in \cV$.
Moreover, $\cV$ is \emph{quasi $(1\pm\eta)m$-balanced} if there is an \emph{exceptional} set $V^\ast \in \cV$ with $|V^\ast|=1$ and $\cV \sm \{V^\ast\}$ is $(1\pm \eta)m$-balanced.
We say that another set family $\cW$ \emph{hits} $\cV$ with subfamily $\cW' \subset \cW$ if each part of $\cV$ contains (as a subset) exactly one part of $\cW'$.
(In particular, the parts of $\cW \sm \cW'$ are disjoint from the parts of $\cV$.)

\begin{definition}[Cover]
	Let $F$ be a $2$-graph and $V$ be a set.
	We say that $\{\cV^x\}_{x \in V(F)}$ and $\{\cW^{e}\}_{e \in F}$ form an \emph{$(s_1,s_2)$-sized $(m_1,m_2,\eta)$-balanced cover} of $V$ with \emph{shape} $F$ if the following holds:
	\begin{enumerate}[(C1)]
		\item Each $\cV^v$ is a quasi $(1\pm \eta)m_1$-balanced set family of size $s_1$.
		\item Each $\cW^e$ is an $m_2$-balanced set family of size $s_2$. Moreover, the vertex sets $\bigcup \cW^e$ and $\bigcup \cW^f$ are pairwise disjoint for distinct $e,f \in F$.
		\item For each $x \in e$, the set family $\cW^{e}$ hits $\cV^{x} \sm \{V^\ast\}$ with subfamily $\cW^{e}_x \subset \cW^{e}$, where $V^\ast$ is the (exceptional) singleton cluster of $\cV^x$.
		\item The family $\bigcup_{x \in V(F)} \cV^x \cup \bigcup_{xy \in F} \cW^{xy} \sm (\cW^{xy}_x \cup \cW_y^{xy})$ partitions~$V$.
	\end{enumerate}
\end{definition}

Consider a $[k]$-graph $R$, and let $\cV=\{V_x\}_{x \in V(R)}$ be a set family.
We write $R(\cV)$ for the \emph{blow-up} of~$R$ by $\cV$, which is a $[k]$-graph with vertex set $\bigcup \cV$ and whose edge set is the union of $V_{x_1} \times \dots \times V_{x_j}$ over all $j$-edges $x_1\dots x_\ell \in R$ with $j \in [k]$.
In this context, we refer to the sets of $\cV$ as \emph{clusters} and to $R$ as the \emph{reduced graph}.
If $R$ is in a family of $[k]$-graphs $\P$, we call $R(\cV)$ a \emph{$\P$-blow-up}.
We write $R(m)$ for a blow-up $R(\cV)$ whose underlying partition $\cV$ is $m$-balanced.
For a $[k]$-graph $G$ with $\bigcup\cV \subset V(G)$, we denote by $G[\cV]$ the graph on vertex set $\bigcup \cV$ that contains all $\cV$-partite edges of $G$.

\begin{definition}[Blow-up cover]
	Let $G$ be a $[k]$-graph.
	Let $\P$ be a family of $[k]$-graphs.
	An $(s_1,s_2)$-sized $(m_1,m_2,\eta)$-balanced cover formed by $\{\cV^v\}_{v \in V(F)}$ and $\{\cW^{e}\}_{e \in F}$ of $V(G)$ with  \emph{shape} $2$-graph $F$ is called a \emph{$\P$-cover} of $G$ if the following holds:
	\begin{enumerate}[\upshape (B1)]
		\item For each $v \in V(F)$, there is an $s_1$-vertex $R^{v} \in \P$ with $R^{v}(\cV^{v}) = G[\cV^v]$.
		\item For every $e \in F$, there is an $s_2$-vertex $R^e \in \P$ with $R^e(\cW^e) = G[\cW^e]$.
	\end{enumerate}
\end{definition}
	
The following result guarantees that $\P$-covers exist in hypergraphs which satisfy $\P$ in a sufficiently robust way; and that we can even find such covers with clusters of size polynomial in $\log \log n$.
It was proved in the context of directed hypergraphs~\cite[Proposition 6.3]{LS24a}.
We state an undirected version with a weaker parametrisation, which is an immediate corollary.

\begin{proposition}[Blow-up cover]\label{prp:blow-up-cover}
	Let $1/k,\, 1/\Delta,\,  1/s_1 \gg 1/s_2 \gg  \eta \gg 1/m_2 \gg 1/m_1 \gg 1/n$.
	Let  $G$ be an $n$-vertex $[k]$-graph.
	Let $\P$ be a family of $[k]$-graphs.
	Suppose that $G$ satisfies $\P$ both $(1,s_1)$-robustly and {$(2s_1-2,s_2)$}-robustly.
	Then $G$ has an $(s_1,s_2)$-sized $(m_1,m_2,\eta)$-balanced $\P$-cover whose shape is a $2$-uniform $1$-cycle.
\end{proposition}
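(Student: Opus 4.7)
The plan is to derive this as an immediate corollary of the directed hypergraph version~\cite[Proposition 6.3]{LS24a}. I would first lift $G$ and $\P$ to the directed setting. Define a directed $[k]$-graph $\vec{G}$ on $V(G)$ by declaring every ordered tuple whose underlying set is an edge of $G$ to be a directed edge of $\vec{G}$. Analogously, let $\vec{\P}$ be the family of directed $[k]$-graphs whose underlying undirected $[k]$-graph lies in $\P$. For each $S \subseteq V(G)$, it then holds that $\vec{G}[S] \in \vec{\P}$ if and only if $G[S] \in \P$, so the property $s$-graphs of $(G, \P)$ and $(\vec{G}, \vec{\P})$ coincide for every $s$. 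In particular, the $(1, s_1)$- and $(2s_1-2, s_2)$-robustness hypotheses on $G$ with respect to $\P$ transfer verbatim to $\vec{G}$ with respect to $\vec{\P}$.

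Applying the directed statement to $\vec{G}$ and $\vec{\P}$ yields an $(s_1, s_2)$-sized $(m_1, m_2, \eta)$-balanced directed $\vec{\P}$-cover of $\vec{G}$ whose shape is a directed $1$-cycle. I then obtain the desired undirected cover by forgetting orientations. The families $\{\cV^v\}_{v}$ and $\{\cW^e\}_{e}$ are set families of vertices and carry over unchanged, so properties (C1)--(C4) of a cover are preserved. The directed reduced graphs pass to their underlying undirected $[k]$-graphs $R^v$ and $R^e$, which lie in $\P$ by the definition of $\vec{\P}$, and the blow-up identities $R^v(\cV^v) = G[\cV^v]$ and $R^e(\cW^e) = G[\cW^e]$ follow from their directed counterparts because $\vec{G}$ was built to contain every orientation of each edge of $G$. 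Forgetting orientations on the shape turns the directed $1$-cycle into a $2$-uniform $1$-cycle, matching the conclusion.

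The only substantive task is to line up the robustness parameters and the shape notion between the two settings. Since the paper already indicates that the undirected statement uses a \emph{weaker} parametrisation than the cited directed one, this step should amount to checking monotonicity in the relevant parameters rather than any new combinatorial work. Accordingly I do not anticipate an obstacle beyond a careful comparison of definitions: the directed robustness at levels $r = 1$ (used to locate the $\cV^v$) and $r = 2s_1 - 2$ (used to locate a connector $\cW^e$ that simultaneously hits $\cV^v \setminus \{V^\ast\}$ and $\cV^{v'} \setminus \{V^\ast\}$ for two neighbouring clusters along the shape) is exactly what the hypotheses here provide after the lift.
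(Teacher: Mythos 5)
Your proposal matches the paper's own (one-line) derivation exactly: the paper obtains \cref{prp:blow-up-cover} from the directed statement~\cite[Proposition 6.3]{LS24a} by passing to the directed $[k]$-graph $\ori{G}$ that contains every orientation of each edge of $G$, then forgetting orientations. Your additional checks (coincidence of the property graphs, transfer of the robustness hypotheses, and recovery of the blow-up identities $R^v(\cV^v)=G[\cV^v]$) are correct and simply make explicit what the paper treats as immediate.
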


We note that \cref{prp:blow-up-cover} is obtained from~\cite[Proposition 6.3]{LS24a} by considering the $[k]$-bounded directed hypergraph $\ori{G}$ obtained by adding for every edge $e$ of $G$ all orientations of~$e$.

\subsection{Allocation}

The next result implements the allocation step in the proof of \cref{thm:framework-connectedness-robust} and is proved in \Cref{sec:allocation-cycle}.
Note that we only allocate to a single blow-up. 

\begin{proposition}[Hamilton path allocation]\label{prp:allocation-Hamilton-cycle}
	Let $1/\ell, \, 1/k,\, 1/s \gg \eta \gg 1/m$ with $k-\ell \not \mid k$.
	Let $R \in \Del_k (\dcon_\ell \cap \dspa_\ell)$ be an $s$-vertex $[k]$-graph.
	Let $\cV=\{V_x\}_{x \in V(R)}$ be a quasi $(1 \pm \eta)m$-balanced partition of $n$ vertices with $n\equiv k \bmod (k-\ell)$ and exceptional cluster $V^\ast$.
	Let $f_1,f_2 \in (R(\cV)-V^\ast)^{(\ell)}$ be vertex-disjoint.
	Then $R(\cV)$ has a \tight Hamilton $(f_1,f_2,\ell)$-path.
\end{proposition}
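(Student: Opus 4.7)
My plan follows the three-step strategy outlined in the methodology: build a short preparatory path with the prescribed endpoints, certify an integer $\ell$-cycle tiling of the residual blow-up via lattice completeness, and then splice the two together inside the complete blow-up.

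First, I would construct a short $\ell$-path $P$ in $R(\cV)$ from $f_1$ to $f_2$ with three properties: \textbf{(P1)} $P$ is short, in the sense $|V(P)| \leq \eta m$, so that after removing $V(P)$ the partition of the remaining vertices is still $(1\pm 2\eta)m$-balanced; \textbf{(P2)} $P$ traverses every edge of $R$ in every cyclic orientation at least once; and \textbf{(P3)} the residual vector of cluster sizes $(|V_x \setminus V(P)|)_{x \in V(R)}$ is tile-compatible with an integral $\ell$-cycle decomposition. Connectivity of $R$ via $\dcon_\ell$ together with the hypothesis $k-\ell\nmid k$ supplies enough flexibility to connect $f_1$ to $f_2$ and to revisit each oriented edge, essentially by walking around the tight $\ell$-component spanned by $\adh_\ell(R)$ and using the fact that every oriented $\ell$-tuple inside an edge of $R$ is reachable because $k-\ell \nmid k$ prevents an unwanted $\mathbb{Z}_{k/(k-\ell)}$-obstruction. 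Since $R \in \Del_k(\dcon_\ell \cap \dspa_\ell)$, these properties persist after pruning a few vertices, which is crucial for avoiding $V^\ast$ and for not using up more than $\eta m$ vertices per cluster.

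Second, after setting $R(\cV') := R(\cV) - V(P)$, I would produce a perfect integer $\ell$-cycle tiling of $R(\cV')$. The space property $\dspa_\ell$ gives a perfect \emph{fractional} $\ell$-cycle tiling of $\adh_\ell(R)$; the task is to integralise it in the blow-up. This is done by lattice completeness: under $\dcon_\ell$ and the divisibility condition $k-\ell\nmid k$, the lattice generated by the $\ell$-cycle shapes of $R$ equals the full hyperplane of integer vectors of balanced total sum. Combining this lattice completeness with the fractional tiling and the fact that the residual sizes in $\cV'$ are within an $\eta$-factor of balanced, I obtain a nonnegative integer combination of $\ell$-cycles $F \subset R$ whose $\cV'$-projection exactly matches the residual cluster sizes. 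Because $R(\cV')$ is a \emph{complete} blow-up, each such combinatorial cycle lifts to a family of vertex-disjoint $\ell$-cycles realised on $\cV'$.

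Third, to assemble everything into a single Hamilton $(f_1,f_2,\ell)$-path, I would absorb each lifted $\ell$-cycle into $P$ one at a time: for a cycle $C$ supported on an edge $e$ of $R$ with some cyclic orientation $\sigma$, property \textbf{(P2)} guarantees that $P$ contains some $k$-edge with the same underlying cluster-sequence and orientation; cutting $P$ at that edge and splicing in $C$ along $e^\sigma$ yields a longer $\ell$-path with the same endpoints, using only edges present in the complete blow-up $R(\cV)$. Iterating over all cycles in the tiling exhausts the residual vertices and produces the desired Hamilton $\ell$-path.

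The main obstacle I anticipate is Step 2, namely extracting lattice completeness from $\dcon_\ell \cap \dspa_\ell$ together with $k-\ell \nmid k$, and synchronising it with the divisibility residues induced by $V(P)$ on each cluster. In particular, one needs the preparatory path of Step 1 to be built already with the right residue classes modulo the lattice of $R$, which is where the freedom provided by $\Del_k(\dcon_\ell \cap \dspa_\ell)$ and the many oriented traversals of \textbf{(P2)} must be delicately accounted for; the remaining arguments are essentially bookkeeping in the complete blow-up.
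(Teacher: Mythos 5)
Your proposal matches the paper's proof essentially step for step: a short ``skeleton'' $\ell$-path from $f_1$ to $f_2$ that passes through every oriented edge of $R$ (built via the strong connectivity supplied by $\dcon_\ell$ and $k-\ell \nmid k$), followed by an integral perfect $\ell$-cycle tiling of the residual blow-up obtained from the fractional tiling plus lattice completeness, and finally the cycle-by-cycle splicing at matching oriented edges. The one detail to correct is the exceptional cluster: the skeleton path must \emph{cover} the vertex of $V^\ast$ (the paper routes a short $\ell$-path through it and attaches that to the skeleton), rather than avoid it, because the tiling step is then carried out in $(R-x^\ast)(\cV')$ where all remaining clusters are $(1\pm 2\eta)m$-balanced --- leaving a size-one cluster for the tiling phase would break the balancedness hypotheses of the tiling lemma.
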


\subsection{Putting everything together}\label{sec:proof-main-result-bandwidth}

Given these preliminaries, we are now ready to prove our main result (\Cref{thm:framework-hamilton-cycle}).

\begin{proof}[Proof of \cref{thm:framework-hamilton-cycle}]
	Given $1 \leq \ell \leq k-1$ with $k-\ell \not \mid k$ and $s$, we introduce constants $s'$, $s_1$, $s_2$, $\eta$, $m_2$, $m_1$ and $n$ such that
	\begin{align*}
		1/k,\, 1/t,\, 1/s \gg 1/s' \gg  1/s_1 \gg 1/s_2 \gg   \eta \gg 1/m_2 \gg 1/m_1 \gg 1/n 
	\end{align*}
	in accordance with \cref{prp:blow-up-cover} such that in addition $k-\ell$ divides $n$.
	Let $\cG$ be a family of $s$-vertex $k$-graphs that admits a Hamilton $\ell$-framework.
	Let $G$ be a  $k$-graph on $n$ vertices that $s$-robustly satisfies $\cG$.
	Our goal is to show that $G$ contains a Hamilton $\ell$-cycle.
	
	We abbreviate $\P = \Del_{k+1}(\dcon_\ell \cap \dspa_\ell)$.
	By \cref{lem:transition}, there is an $n$-vertex $[k]$-graph $G' \subset G \cup \partial_\ell(G)$ that $s'$-robustly satisfies $\dcon_\ell \cap \dspa_\ell$.
	Note that $G$ satisfies both $(1,s_1)$-robustly and $(2s_1-2,s_2)$-robustly $\P$ by two applications of \cref{corollary:booster2}.
	Hence, an application of \cref{prp:blow-up-cover} reveals that $G$ has an $(s_1,s_2)$-sized $(m_1,m_2,\eta)$-balanced $\P$-cover formed by $\{\cV^v\}_{v \in V(F)}$ and $\{\cW^{e}\}_{e \in F}$ whose shape $F$ is a $2$-uniform $1$-cycle.
	We denote the corresponding reduced $[k]$-graphs of $\P$ by $ R^v$ and $ R^e$ for $v \in V(F)$ and $e \in F$.
	We identify the vertices of the cycle $F$ with $V(F) = \{1,\dots,b\}$ following the natural cyclic ordering.
	
	{By deleting at most $2(k-\ell)-2$ vertices from each $\cW^{e}$ with $e \in F$, keeping the names for convenience, we may ensure that 
	\begin{enumerate}[(a)]
		\item \label{itm:div-W} $|\bigcup \cW^{f}| \equiv k \bmod k-\ell$   for each $f \in F$ and
		\item \label{itm:div-V} $|\bigcup \cV^{v}| - |\bigcup \cW^{uv}| - |\bigcup \cW^{vw}| \equiv -\ell \bmod k-\ell$ for consecutive $u,v,w \in V(F)$.
	\end{enumerate}
	Indeed, we begin with $1 \in V(F)$ and delete fewer than $k-\ell$  vertices of $\bigcup \cV^1$ from $\cW^{12}$ to ensure that $\cV^1$ satisfies part~\ref{itm:div-V}.
	Then delete fewer than $k-\ell$  vertices of $\bigcup \cV^2$ from $\cW^{12}$ to ensure $\cW^{12}$ satisfies part~\ref{itm:div-W}.
	We continue in this way moving around the cycle $F$ until parts~\ref{itm:div-W} and~\ref{itm:div-V} are satisfied for all vertices and edges of $F$.
	Note that in the final step, after ensuring part~\ref{itm:div-V} for $\cV^n$, it is not necessary to delete further vertices of $\cV^1$ from $\cW^{b1}$, as $\cW^{b1}$ automatically satisfies part~\ref{itm:div-W} because $n$ is divisible by $k-\ell$.
	We remark that this leaves the families $\cV^{v}$ still $(1\pm 2\eta)m_1$-balanced and the families $\cW^{e}$ still $(1\pm 2\eta)m_2$-balanced.}
	
	For each $i \in V(F)$, we select disjoint $\ell$-sets $e_i,f_i \in R^{i}(\cV^{i})$ with $e_1 = f_b$, where in addition we have $e_i \in R^{(i-1)i}(\cW^{(i-1)i})$ and $f_i \in R^{i(i+1)}(\cW^{i(i+1)})$.
	We then apply \cref{prp:allocation-Hamilton-cycle} to find a Hamilton $(f_i,e_{j},\ell)$-path $P^{ij}$ in $R^{ij}(\cW^{ij})$ for each $i \in [b]$ and $j=i+1$.
	We delete from $\cV^i$ the vertices already used in those paths, except for $e_i \cup f_i$, keeping the names $\cV^i$ for convenience.
	Note that the families $\cV^{i}$ are still $(1\pm 4\eta)m_1$-balanced after the deletions. 
	Also, by~\ref{itm:div-V}, since we deleted all but $|e_i \cup f_i| = 2\ell$ vertices from $\cV$, we now have that $|\bigcup \cV^i| \equiv - \ell + 2 \ell \equiv k \bmod k - \ell$.
	We may therefore finish by applying \cref{prp:allocation-Hamilton-cycle} to find a Hamilton $(e_i,f_{i},\ell)$-path $P^{i}$ in $R^{i}(\cV^{i})$ for each $i \in [b]$ with index computations taken modulo~$b$. 
\end{proof}

\section{Allocation}\label{sec:allocation-cycle}

In the following, we prove \cref{prp:allocation-Hamilton-cycle}.
We first use the assumptions to show that one can find an integral cycle tiling.
Then we find the Hamilton path.

\subsection{Lattice completeness} \label{sec:lattice-completeness}

To allocate a perfect tiling into a suitable blow-up, we borrow a few concepts from Keevash and Mycroft~\cite{KM15} following the exposition of the first author~\cite{Lan23}.
For $k$-graphs $F$ and $G$ and a homomorphism $\phi \in \Hom(F,G)$, we denote by $\vn_\phi \in \NATS^{V(G)}$ the \emph{indicator vector}, which counts the number of vertices of $F$ mapped to each $v \in V(G)$ by~$\phi$.
Formally, $\vn_\phi(v) = |\phi^{-1}(v)|$ for every $v \in V(G)$.
The \emph{$F$-lattice} of $G$ is the additive subgroup $\cL_F(G) \subset \INTS^{V(G)}$ generated by the vectors $\vn_\phi$ with $\phi \in \Hom(F,G)$.
We say that $\cL_F(G)$ is \emph{complete} if it contains all $\vecb b \in \INTS^{V(G)}$ for which $\sum_{v \in V(G)} \vecb b(v)$ is divisible by $v(F)$.
The goal of this section is to show the following result.

\begin{lemma}\label{lem:lattice-completeness}
	Let $1 \leq \ell \leq k-1$ with $k-\ell \not \mid k$, and let $R$ be an $\ell$-connected $k$-graph.
	Then there is a $k$-uniform $\ell$-cycle $C$ with $e(C) \leq k^4$ which is $k$-partite and such that $\cL_C(R)$ is complete.
\end{lemma}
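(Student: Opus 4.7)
The plan is to exhibit $C$ as a $k$-partite $\ell$-cycle whose $k$ partition-class sizes $(a_1, \ldots, a_k)$ contain a pair differing by exactly~$1$, and then leverage class-transpositions against the $\ell$-connectivity of $R$ to produce every required lattice element.

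First I would construct $C$. Combining \cref{lem:KMO-space} and \cref{prop:KMO-strongly-connected} (as in \cref{cor:space-cycles}), one can build a $k$-partite $\ell$-cycle on $m=t(k-\ell)$ vertices with $t$ polynomial in $k$. Choosing $m$ so that $(k-1)$ does not divide $(1-\lambda(k,\ell))m$ forces the ``as equal as possible'' classes produced by the coloring from \cref{lem:KMO-space} to take two values differing by exactly~$1$. The closing gadget from \cref{prop:KMO-strongly-connected} adds only $O(k^3)$ vertices; by allocating them appropriately (for example, distributing them to already-ceiling classes) we preserve a pair $(a_p, a_q)$ with $a_p - a_q = 1$, while keeping $e(C) \leq k^4$.

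Now fix any edge $e = \{v_1, \ldots, v_k\} \in R$ and let $V_1, \ldots, V_k$ be the partition classes of $C$ with $|V_i| = a_i$. For each $\sigma \in S_k$ the assignment $V_i \mapsto v_{\sigma(i)}$ is a homomorphism $\phi_\sigma \colon C \to R$ with indicator
\begin{equation*}
\vn_{\phi_\sigma} \;=\; \sum_{j=1}^{k} a_{\sigma^{-1}(j)}\, \vn_{v_j}.
\end{equation*}
A direct calculation gives $\vn_{\phi_\sigma} - \vn_{\phi_{\sigma \cdot (p\,q)}} = (a_p - a_q)\bigl(\vn_{v_{\sigma(p)}} - \vn_{v_{\sigma(q)}}\bigr)$ for every transposition $(p\,q) \in S_k$. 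Fixing a pair $(p,q)$ with $a_p - a_q = 1$ and letting $\sigma$ vary so that $(\sigma(p),\sigma(q))$ ranges over all ordered pairs of distinct elements of $[k]$, I conclude that $\vn_{v_s} - \vn_{v_t} \in \cL_C(R)$ for all distinct $s,t \in [k]$.

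Next, propagate via $\ell$-connectivity: given $u, w \in V(R)$, the $\ell$-connectedness of $R$ supplies a chain of edges $e_0, e_1, \ldots, e_T \in R$ with $u \in e_0$, $w \in e_T$, and $|e_{i-1} \cap e_i| \geq \ell \geq 1$. Picking $x_i \in e_{i-1} \cap e_i$ and telescoping the differences $\vn_{x_{i-1}} - \vn_{x_i}$ (each in $\cL_C(R)$ by the previous step applied inside $e_i$) yields $\vn_u - \vn_w \in \cL_C(R)$. Finally, any homomorphism (for instance, the identity $V_i \mapsto v_i$ into any fixed edge) contributes a vector $\sum_i a_i \vn_{v_i}$ of coordinate sum $v(C)$ to $\cL_C(R)$; together with the differences $\vn_u - \vn_w$ this generates every $\vecb{b} \in \INTS^{V(R)}$ whose coordinate sum is divisible by $v(C)$, establishing completeness.

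The principal technical obstacle is the first step: ensuring $C$ is a $k$-partite $\ell$-cycle of bounded size whose class sizes contain a pair with difference exactly~$1$. This needs careful control over the length of the underlying $\ell$-path and over how the closing gadget from \cref{prop:KMO-strongly-connected} distributes its extra vertices among the classes, and is where the hypothesis $k - \ell \nmid k$ (equivalently $\lambda(k,\ell) < 1/k$) is used to rule out the degenerate all-equal coloring.
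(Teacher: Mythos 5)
Your overall architecture is sound, and its second half is genuinely different from the paper's. The paper proves this lemma by combining two black boxes: \cref{prop:gcd-cycles}, which supplies a $k$-partite $\ell$-cycle $C$ with $e(C)\leq k^4$ and $\gcd(C)=1$ (equivalently, a proper $k$-colouring with two classes differing by exactly one), and \cref{lem:connectivity-to-completeness}, which converts $\gcd(F)=1$ plus connectivity into completeness of $\cL_F(G)$. Your transposition identity $\vn_{\phi_\sigma}-\vn_{\phi_{\sigma\cdot(p\,q)}}=(a_p-a_q)\bigl(\vn_{v_{\sigma(p)}}-\vn_{v_{\sigma(q)}}\bigr)$, the telescoping along a chain of $\ell$-intersecting edges (using that $\ell$-connectivity forbids isolated vertices, so every vertex lies on such a chain), and the final generation step are all correct; together they amount to a self-contained proof of exactly the special case of \cref{lem:connectivity-to-completeness} needed here. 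What this buys is transparency; what it costs is that you must construct $C$ with the ``two classes differ by one'' property explicitly rather than quoting $\gcd(C)=1$.

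That construction is the one genuine soft spot, and your sketch of it does not quite go through as written. First, \cref{lem:KMO-space} only pins the size of colour class $k$ down to $\lambda(k,\ell)m\pm 1$, so the mass left for the remaining $k-1$ classes is $(1-\lambda(k,\ell))m$ only up to an error you do not control; demanding $(k-1)\nmid(1-\lambda(k,\ell))m$ therefore does not force two of those classes to differ by one. Second, the closing gadget of \cref{prop:KMO-strongly-connected} is a complete $k$-partite graph with \emph{equal} parts of size $k\ell(k-\ell)+1$, of which $2\ell$ vertices are the endtuples of the path and hence already assigned to classes; the gadget thus contributes to class $i$ the fixed amount $k\ell(k-\ell)+1$ minus the number of endtuple vertices in class $i$, so its extra vertices are not freely ``allocatable''. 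The paper's \cref{prop:gcd-cycles} sidesteps both issues by building the cycle the other way around: start from the complete $k$-partite graph with all $k$ parts equal to $k\ell(k-\ell)+1$, add $q=\lceil k/(k-\ell)\rceil(k-\ell)-k$ further vertices to $q$ distinct parts, where $0<q<k-\ell\leq k$ precisely because $k-\ell\nmid k$, so that two parts differ by exactly one by fiat, and only then invoke \cref{prop:KMO-strongly-connected} to exhibit a Hamilton $\ell$-cycle in the resulting graph. Replacing your first step by that construction closes the gap; the rest of your argument then completes the proof.
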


Recall that a {proper $t$-colouring} of a $k$-graph $F$ is an assignment of natural numbers ({colours}) to the vertices of $F$ such that no edge contains two vertices of the same colour.
We define the \emph{chromatic number} $\chi(F)$ to be the least integer $t$ for which $F$ admits a proper $t$-colouring.
Let $\cD(F) \subset~\NATSZ$ be the union of the integers $||\phi^{-1}(1)|-|\phi^{-1}(2)||$ over all proper $\chi(F)$-colourings $\phi$ of $F$.
(Note that this is a non-standard definition of the chromatic number for hypergraphs.)
We denote by $\gcd(F)$ the greatest common divisor of the integers in $\cD(F)\sm \{0\}$ with the convention that $\gcd(F)=\infty$ if $\cD(F)=\{0\}$.
The following lemma gives a convenient sufficient condition for lattice completeness.

\begin{lemma}[{\cite[Lemma 6.4]{Lan23}}]\label{lem:connectivity-to-completeness}
	Suppose that $G$ is a $1$-connected $k$-graph, and that $F$ is a $k$-partite $k$-graph with $\gcd(F)=1$.
	Then $\cL_F(G)$ is complete.\footnote{We note that ``$\ell$-connected'' has a different meaning in the work of the first author~\cite{Lan23}. 
		That~$G$ is $1$-connected here implies that $G$ is ``$k$-connected'' in the language of the other paper, and since~$F$ is $k$-partite we have $\chi(F) = k$, from which this statement follows.}
\end{lemma}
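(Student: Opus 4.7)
The plan is to show that $\cL_F(G)$ contains every $\vecb{b} \in \INTS^{V(G)}$ whose coordinate-sum is divisible by $v(F)$. I would proceed in three stages: first, generate the differences $\vn_v - \vn_w$ when $v, w$ lie in a common edge of $G$; then propagate these across $G$ using $1$-connectedness; and finally lift to vectors with non-zero sum.

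First, I would establish that $\vn_v - \vn_w \in \cL_F(G)$ for every edge $e \in G$ and every pair $v, w \in e$. Since $F$ is $k$-partite and $\gcd(F) = 1$ rules out the edgeless case, $\chi(F) = k$. Given any proper $k$-colouring $\sigma$ of $F$ and any bijection $\beta\colon [k] \to e$, the composition $\beta \circ \sigma$ is a homomorphism from $F$ to $G$. Fix $\beta$ with $\beta(1) = v$ and $\beta(2) = w$, and let $\sigma'$ arise from $\sigma$ by swapping the colours $1$ and $2$. Then $\vn_{\beta \circ \sigma} - \vn_{\beta \circ \sigma'} = (|\sigma^{-1}(1)| - |\sigma^{-1}(2)|)(\vn_v - \vn_w)$, which is supported on $\{v, w\}$. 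Varying $\sigma$ over all proper $k$-colourings, this places $d(\vn_v - \vn_w)$ into $\cL_F(G)$ for every $d \in \cD(F)$, and a B\'ezout combination together with $\gcd(F) = 1$ yields $\vn_v - \vn_w \in \cL_F(G)$.

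Second, I would propagate this to arbitrary pairs using that $G$ is $1$-connected. Any two vertices $v, w \in V(G)$ are joined by a sequence of edges $e_1, \dots, e_t$ in which $v \in e_1$, $w \in e_t$, and consecutive edges share a vertex. Choosing $u_0 = v$, $u_t = w$, and $u_i \in e_i \cap e_{i+1}$ for $1 \leq i < t$, the telescoping identity $\vn_v - \vn_w = \sum_{i=1}^t (\vn_{u_{i-1}} - \vn_{u_i})$ combined with the first step gives $\vn_v - \vn_w \in \cL_F(G)$.

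Finally, fix one homomorphism $\phi^*\colon F \to G$ produced in the first step; its indicator vector $\vn_{\phi^*} \in \cL_F(G)$ has coordinate-sum $v(F)$. Given $\vecb{b} \in \INTS^{V(G)}$ with $\sum_v \vecb{b}(v) = m \cdot v(F)$, decompose $\vecb{b} = m \vn_{\phi^*} + (\vecb{b} - m \vn_{\phi^*})$. The first summand lies in $\cL_F(G)$; the second has zero coordinate-sum, so after fixing a reference vertex $y \in V(G)$ it rewrites as $\sum_{x \neq y}(\vecb{b} - m \vn_{\phi^*})(x)(\vn_x - \vn_y)$, an integer combination of vectors produced in the second step. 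The main obstacle is the first step, where one must vary both the bijection $\beta\colon [k] \to e$ (to target any ordered pair $(v, w) \in e$ rather than a fixed coordinate pair) and the colouring $\sigma$ (to harvest the full set $\cD(F)$); this interplay is precisely what converts the hypothesis $\gcd(F) = 1$ into the full basis difference $\vn_v - \vn_w$ instead of a proper multiple of it.
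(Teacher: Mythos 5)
Your argument is correct and complete: the colour-swap computation $\vn_{\beta\circ\sigma}-\vn_{\beta\circ\sigma'}=(|\sigma^{-1}(1)|-|\sigma^{-1}(2)|)(\vn_v-\vn_w)$ combined with B\'ezout, the telescoping along a $1$-connected chain of edges, and the zero-sum decomposition $\vecb{b}-m\vn_{\phi^*}=\sum_{x\neq y}(\vecb{b}-m\vn_{\phi^*})(x)(\vn_x-\vn_y)$ together give exactly what is needed. Note that the paper itself does not prove this lemma --- it imports it from \cite{Lan23} with only a footnote translating the connectivity terminology --- so you have supplied a self-contained proof of a black-box ingredient; the route you take (generating $d(\vn_v-\vn_w)$ for all $d\in\cD(F)$ and propagating via connectivity) is the standard one underlying the cited result, so there is no substantive divergence to report.
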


Given \cref{lem:connectivity-to-completeness}, \cref{lem:lattice-completeness} is a direct consequence of the next result.

\begin{lemma}\label{prop:gcd-cycles}
	Let $1 \leq \ell \leq k-1$ with $k - \ell \not \mid k$, and let $m \geq \lceil k/(k-\ell) \rceil$.
	Then the $k$-uniform $\ell$-cycle $C$ with $k^2\ell+m$ edges is $k$-partite.
	Moreover, if $m = \lceil k/(k-\ell) \rceil$, then we also have $\gcd(C)=1$.
\end{lemma}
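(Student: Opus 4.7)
The plan is to construct an explicit proper $k$-coloring $\phi$ of $C$ and read off its colour class sizes. Label the vertices cyclically as $v_0, \ldots, v_{N-1}$ with $N = s(k^2\ell + m)$ and $s = k-\ell$; the edges are $E_j = \{v_{(j-1)s}, \ldots, v_{(j-1)s+k-1}\}$ (indices modulo $N$) for $j = 1, \ldots, t := k^2\ell + m$. Consider the ``periodic'' assignment $\phi_0(v_j) = j \bmod k$: on every edge whose $k$ indices stay in $\{0, \ldots, N-1\}$ without wrapping past $v_{N-1}$, the colours form a cyclic shift of $(0, 1, \ldots, k-1)$ and $\phi_0$ is proper. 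If $k \mid N$ then $\phi_0$ also closes up at the seam and we are done; otherwise, the final $d := \lceil k/s\rceil$ edges wrap around and $\phi_0$ develops duplicated colours there. Note that the hypothesis $k-\ell\nmid k$ forces $s\geq 2$.

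To fix $\phi_0$ when $k\nmid N$, I will redefine it on a short window of vertices near the seam. Choosing colours for the $s$ newly-introduced vertices at each shift step $j$ amounts to picking a permutation $\sigma_j$ acting on the last $s$ coordinates of the colour-window $(\phi(v_{(j-1)s}), \ldots, \phi(v_{(j-1)s+k-1}))$; the coloring closes around the cycle iff $\sigma_t \tau \cdots \sigma_1 \tau = \mathrm{id}$, where $\tau\in S_k$ is the cyclic shift of $\{0,\ldots,k-1\}$ by~$s$. Setting $\sigma_j = \mathrm{id}$ for $j\leq t-d$ reduces the requirement to expressing $\tau^{-t\bmod k}$ as a product of $d$ permutations on sliding $s$-windows of $\{0,\ldots,k-1\}$. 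Since $s\geq 2$, the subgroup of $S_k$ generated by $\tau$ and these window-permutations is all of $S_k$; combined with the hypothesis $m\geq d$ supplying $d$ consecutive correction blocks at the seam, the required factorisation exists. This produces a proper $k$-coloring $\phi$, establishing $k$-partiteness.

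For $\gcd(C) = 1$ in the case $m = d$ we have $sd = k+r$ with $0 < r < s$, so $N = qk + r$ with $q = sk\ell + 1$. The periodic segment $v_0,\ldots,v_{N-ds-1}$ has length $sk^2\ell$ divisible by $k$, so each colour occurs exactly $sk\ell$ times there. The correction zone $v_{N-ds},\ldots,v_{N-1}$ consists of $ds = k+r$ vertices, and by choosing the $\sigma_j$ so that every colour appears once or twice in this zone (the unique distribution with sum $k+r$, no colour exceeding $2$, every colour occurring at least once), the final class sizes lie in $\{q, q+1\}$: exactly $r$ colours have size $q+1$ and $k-r$ colours have size $q$. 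Since $0 < r < k$, both sizes occur, so some pair of colours differs in size by exactly~$1$. As $\cD(C)$ is invariant under permuting colour labels, $1 \in \cD(C)$, hence $\gcd(C) = 1$.

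The main technical obstacle is verifying simultaneously that the seam correction is (i) proper on every wraparound edge and (ii) balanced so that each colour appears once or twice in the correction zone. When $\ell < s$ (i.e., $\ell < k/2$), only $E_t$ wraps; a direct swap of each duplicated colour for the unique missing colour handles (i), and a short case check against the preceding edge $E_{t-1}$ shows that (ii) is automatic. When $\ell \geq s$, several edges wrap together and the construction becomes genuinely nonlocal; in this regime the group-theoretic argument above produces \emph{some} valid colouring, after which one can rebalance by swapping pairs of same-coloured vertices sharing a common edge (an operation that preserves validity while redistributing colour counts) until the $\{1, 2\}$ balance on the correction zone is achieved. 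The $\gcd = 1$ conclusion is then forced by the global count $N = qk + r$.
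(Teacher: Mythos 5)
Your overall strategy (colour the cyclic vertex sequence periodically by $j \bmod k$ and repair the seam) is different from the paper's, which instead builds an explicit complete $k$-partite host $H'$ with $k$ parts of size $k\ell(k-\ell)+1$ plus $q = m(k-\ell)-k$ extra vertices spread over $q$ distinct parts, and then uses \cref{prop:KMO-strongly-connected} to find a spanning $\ell$-path in the balanced part and appends the $q$ extra vertices to close it into a Hamilton $\ell$-cycle. The $k$-partiteness and the colour-class sizes of $C$ are then simply read off from the parts of $H'$: since $0<q<k-\ell$ when $m=\lceil k/(k-\ell)\rceil$, two parts differ in size by exactly $1$, giving $1\in\cD(C)$. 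All the combinatorial difficulty is thus delegated to the connection lemma, and no seam analysis is needed.

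As written, your argument has two genuine gaps, both at the seam. First, the group-theoretic step: knowing that $\tau$ together with the window-permutations generates $S_k$ only tells you that the target element $\tau^{-t\bmod k}$ is \emph{some} word in these generators; it does not give you a factorisation of the very constrained form you need, namely a product of exactly $d=\lceil k/(k-\ell)\rceil$ factors, each supported on a prescribed sliding $s$-window and appearing in a prescribed order interleaved with fixed copies of $\tau$. Nothing in the proposal shows such a factorisation exists, and $d$ is the bare minimum number of factors one could hope for (each factor moves at most $s$ points and the target moves all $k$), so there is no slack. Second, the rebalancing operation "swapping pairs of same-coloured vertices sharing a common edge" is not well defined: in a proper colouring two vertices of the same colour never lie in a common edge, and exchanging the colours of two differently coloured vertices of one edge will in general violate properness on the other edges containing them (each internal vertex of $C$ lies in several edges when $\ell\geq k/2$, which is exactly the regime where you invoke this operation). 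Since the Moreover-part of the lemma needs a colouring with two class sizes differing by exactly $1$ --- the divisibility count $N=qk+r$ alone only guarantees that the classes are not all equal --- this unproved balancing step is essential and cannot be waved away. I would recommend replacing the seam surgery by the paper's host-graph construction, or at least proving the factorisation and rebalancing claims in full.
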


\begin{proof}
	Let $q = m \cdot (k-\ell) - k$.
	Consider a complete $k$-partite $k$-graph $H$ with parts  $V_1,\dots,V_k$ of size $k\ell \cdot(k -\ell) + 1$.
	Let $H'$ be the $k$-partite $k$-graph with clusters $V_1',\dots,V_k'$ obtained from $H$ by adding a vertex $u_j$ to cluster $V_{\ell+j}$ for each $j \in [q]$, with indices modulo $k$.
	Note that if $m = \lceil k/(k-\ell) \rceil$, then $0 < q < k - \ell$ (since $k-\ell \not \mid k$); so in this case it follows that $H'$ has two parts whose sizes differ by one. Hence $\gcd(H')=1$.
	
	Observe that the number of vertices of $C$ (as defined in the statement)
	is precisely $(k^2 \ell + m)(k-\ell) = k\cdot(k\ell\cdot (k-\ell) + 1) + q = v(H')$.
	Hence, to conclude it suffices to show that $H'$ contains a Hamilton $\ell$-cycle.
	Consider $f_1=(w_1,\dots,w_\ell)$ with $w_i \in V_{\ell+q+i}$ for $i \in [\ell]$ (index computations modulo $k$).
	Let $f_2=(v_1,\dots,v_\ell)$ with $v_i \in V_{i}$ and $v_i \notin f_1$ for each $i \in [\ell]$.
	By \cref{prop:KMO-strongly-connected}, $H$ contains an $(f_1,f_2,\ell)$-path $P$ on vertex set $V(H)$.
	We obtain a Hamilton $\ell$-cycle in $H'$ by appending the vertices $u_1,\,\dots,\,u_q$ to~$P$.
\end{proof}

\subsection{Perfect tilings}

In the following, we allocate a perfect tiling in the context of \cref{prp:allocation-Hamilton-cycle}.

\begin{proposition}[Perfect tiling allocation]\label{lem:perfect-tiling-allocation}
	Let $1/k,\, 1/s \gg \eta \gg 1/m$ with $k-\ell \not \mid k$.
	Let $R \in \Del_k (\dcon_\ell \cap \dspa_\ell)$ be an $s$-vertex $[k]$-graph.
	Let $\cV=\{V_x\}_{x \in V(R)}$ be a $(1 \pm \eta)m$-balanced partition with $|\bigcup \cV|$ divisible by $k-\ell$.
	Then the blow-up $R(\cV)$ has a perfect $\ell$-cycle tiling.
\end{proposition}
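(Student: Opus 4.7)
The plan is to build the perfect $\ell$-cycle tiling of $R(\cV)$ by combining the two properties of $R$. Such a tiling corresponds to a non-negative integer assignment $\mu$ on homomorphisms $\phi$ from $\ell$-cycles to $\adh_\ell(R)$ satisfying $\sum_\phi \mu(\phi)\,|\phi^{-1}(x)| = |V_x|$ for every $x \in V(R)$. Once the cluster usage counts match, vertex-disjoint copies of the corresponding cycles can be embedded greedily into the complete blow-up $R(\cV)$ because each cluster still contains enough free vertices at every step.

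I extract two tools. From $R \in \dcon_\ell$, the adherence $\adh_\ell(R)$ is a vertex-spanning $\ell$-connected $k$-graph, so by \Cref{lem:lattice-completeness} there exists a $k$-partite $\ell$-cycle $C^*$ of bounded order $v(C^*) \leq k^{5}$ with $\cL_{C^*}(\adh_\ell(R))$ complete. From $R \in \dspa_\ell$ there is a perfect fractional $\ell$-cycle tiling $\omega$ of $\adh_\ell(R)$, which by LP basic-feasibility we may take to have $|\supp(\omega)| \leq s$.

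I then assemble $\mu$ in three layers. A \emph{reserve layer} consisting of $M=M(k,s)$ copies of every homomorphism $C^* \to \adh_\ell(R)$ is set aside to provide headroom for later signed corrections; this consumes only a constant (in $m$) share of each cluster. A \emph{bulk layer} is obtained by rescaling $\omega$ so that its combined usage with the reserve matches the target $(|V_x|)_x$, and then rounding to integers; the resulting defect $\vecb d$ has entries of order $\eta m + s$, well below the reserve size. Finally, a \emph{correction layer}: provided $\sum_x d_x$ is divisible by $v(C^*)$, lattice completeness yields an integer expression $\vecb d = \sum_\phi \alpha_\phi \vn_\phi$ over homomorphisms $\phi\colon C^* \to \adh_\ell(R)$ with $\alpha_\phi \in \INTS$, and the reserve absorbs any negative coefficients while keeping the total $\mu$ non-negative.

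The main obstacle is the divisibility condition $\sum_x d_x \equiv 0 \pmod{v(C^*)}$. The hypothesis supplies only $|\bigcup \cV| \equiv 0 \pmod{k-\ell}$, and $v(C^*)$ is generally a larger multiple of $k-\ell$. To bridge this gap, I would augment the reserve with copies of $\ell$-cycles of several different lengths: by \Cref{prop:gcd-cycles}, $\adh_\ell(R)$ admits $k$-partite $\ell$-cycles of every order $(k^2\ell + j)(k-\ell)$ with $j \geq \lceil k/(k-\ell) \rceil$, whose vertex counts have greatest common divisor $k-\ell$, so the reserve's total vertex count can be tuned to shift $\sum_x d_x$ into the correct residue class modulo $v(C^*)$. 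The robustness $R \in \Del_k(\dcon_\ell \cap \dspa_\ell)$ ensures that these auxiliary homomorphisms remain available after the reserve step, keeping both the lattice and the fractional tiling usable during bulk and correction.
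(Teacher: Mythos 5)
Your overall architecture (a reservoir of lattice-generating cycles, a bulk layer obtained from the fractional tiling, divisibility gadgets of several lengths via \cref{prop:gcd-cycles}, then a signed lattice correction absorbed by the reservoir) matches the paper's proof. But there is a genuine quantitative gap in the bulk layer. A single rescaling of a perfect fractional $\ell$-cycle tiling $\omega$ of $R$ assigns the \emph{same} target weight to every cluster, whereas the clusters have sizes anywhere in $[(1-\eta)m,(1+\eta)m]$; the resulting defect vector $\vecb d$ therefore has entries of order $\eta m$, exactly as you concede. Since $1/s \gg \eta \gg 1/m$, the quantity $\eta m$ tends to infinity, while your reserve consists of only $M(k,s)$ copies per homomorphism, a number independent of $m$. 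The coefficients $\alpha_\phi$ in any lattice representation of $\vecb d$ scale with $\|\vecb d\|_1$, so they are also of order $\eta m$ and cannot be absorbed by a constant-size reserve: the claim that the defect is ``well below the reserve size'' is false as stated. (One could repair this by taking $\Theta_{k,s}(\eta m)$ reserve copies per homomorphism and proving that the representation can be chosen with coefficients $O_{k,s}(\|\vecb d\|_1)$, but you supply neither.) A second, smaller gap: the fractional tiling ranges over $\ell$-cycles of a priori unbounded order, and LP basic feasibility bounds the support size, not the cycle lengths; your estimate ``$\eta m + s$'' for the rounding error tacitly assumes bounded cycle order, so you need a reduction to bounded tilings such as \cref{obs:boundedn-order} before rounding.

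The paper sidesteps the $\eta m$ defect entirely. Instead of rescaling one fractional tiling of $R$, it constructs a perfect fractional tiling of the \emph{imbalanced blow-up itself}: after removing $O(s)$ vertices, a perfect matching in the complete $(s-1)$-uniform blow-up $K(\cV'')$ (\cref{lem:balancing-matching}) partitions $\bigcup\cV''$ into perfectly $m_x$-balanced sub-blow-ups of the graphs $R-x$; since $R\in\Del_k(\dspa_\ell)$, each $R-x$ still admits a perfect fractional cycle tiling, and blowing these up and summing yields an exactly feasible fractional tiling of $R(\cV'')$. The rounding defect is then $O_{k,s}(1)$, which a constant-size reservoir can absorb. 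Note that this is the step where the hypothesis $R\in\Del_k(\dcon_\ell\cap\dspa_\ell)$ is genuinely needed, not merely to keep auxiliary homomorphisms ``available'' after setting aside the reserve.
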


We need a simple auxiliary result about perfect matchings.

\begin{observation}\label{lem:balancing-matching}
	For $1/k \gg \eta \gg 1/m$, let  $G$ be a $(1\pm\eta)m$-balanced blow-up of a complete $k$-graph of order $k+1$, whose total number of vertices is divisible by $k$.
	Then $G$ has a perfect matching.\qed
\end{observation}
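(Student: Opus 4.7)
The plan is to set up a linear system counting how many matching edges of each ``type'' are needed, solve it explicitly, and then realize the solution. Write $R = K_{k+1}^{(k)}$ with $V(R) = [k+1]$, so that $\cV = \{V_1, \ldots, V_{k+1}\}$ and every edge of $G = R(\cV)$ uses exactly one vertex from each cluster except a single cluster $V_j$ that it \emph{avoids}. For a hypothetical perfect matching $M$, let $y_j$ denote the number of edges in $M$ avoiding $V_j$. Counting incidences at cluster $V_i$ yields the equations $\sum_{j \neq i} y_j = |V_i|$ for every $i \in [k+1]$.

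Summing these $k+1$ equations gives $k \sum_j y_j = \sum_i |V_i| = N$, where $N = |V(G)|$; since $k \mid N$ by assumption, $\sum_j y_j = N/k$ is a non-negative integer. Subtracting each equation in turn from this total then yields the explicit formula $y_j = N/k - |V_j|$, which is automatically integral. The quasi-balancedness $|V_j| = (1 \pm \eta)m$ implies $N/k = (1 \pm \eta)(k+1)m/k$, and hence $y_j = m/k \pm 3 \eta m$, which is strictly positive for $\eta$ small enough in terms of $1/k$.

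To convert these multiplicities into an actual matching, I would for each $i \in [k+1]$ fix an arbitrary partition of $V_i$ into sets $\{A_{i,j}\}_{j \neq i}$ with $|A_{i,j}| = y_j$, which is possible precisely because $\sum_{j \neq i} y_j = |V_i|$. Then, for each fixed $j$, the $k$ sets $\{A_{i,j} : i \neq j\}$ all have size $y_j$, so fixing arbitrary bijections between them produces $y_j$ pairwise-disjoint transversal $k$-sets, each of which is an edge of $G$ since it meets every cluster other than $V_j$ in exactly one vertex. Taking the union over $j \in [k+1]$ of these edge packings covers every vertex exactly once and yields the desired perfect matching.

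The statement is essentially a counting and divisibility argument followed by a routine realisation step; the only point requiring any care is the non-negativity of the $y_j$, and this is immediate from the hierarchy $\eta \ll 1/k$.
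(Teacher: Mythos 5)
Your proof is correct and complete. The incidence count $\sum_{j\neq i} y_j = |V_i|$ is right (an edge covers a vertex of $V_i$ exactly when it does not avoid $V_i$, and then it covers exactly one), the closed-form solution $y_j = N/k - |V_j|$ is integral precisely because $k \mid N$, the bound $y_j = m/k \pm 3\eta m > 0$ follows from $\eta \ll 1/k$, and the realisation via the partitions $\{A_{i,j}\}_{j\neq i}$ and bijections is a valid way to turn the multiplicities into disjoint transversal edges. The paper states this as an observation without a printed proof; its internal draft argument is different in flavour: it iteratively shaves off small matchings of $k$ edges, each reducing the maximum discrepancy between cluster sizes by one, until the clusters are balanced, and then finishes in the balanced case. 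Your route instead solves the type-counting linear system explicitly, which buys a cleaner, fully self-contained argument: integrality and non-negativity of the $y_j$ are read off directly from the divisibility hypothesis and the balancedness, with no iteration and no appeal to a separate balanced-case endgame. Both are fine; yours is the more transparent verification of exactly where the hypotheses $k \mid N$ and $\eta \ll 1/k$ are used.
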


\COMMENT{\begin{proof}
		Let $\cV$ be the partition of $V(G)$.
		Note that we can easily find a perfect fractional matching if all clusters have the same size.
		Simply give each edge the same weight.
		Otherwise, we proceed as follows.
		Denote by $d$ the largest differences in size of two clusters.
		So $d \leq 2\eta n$ by assumption.
		To balance the clusters of $\cV$, we construct a matching $M$ as follows.
		We begin with $M = \es$.
		While there is a cluster $V \in \cV$ that is larger than all others in $G - V(M)$, select edges $e_1,\dots,e_{k}$ such that each of these edges covers a vertex of $V$ and misses a unique cluster of the others.
		So after adding these edges, the largest difference between the size of two clusters must have gone down by $1$.
		This process stops after $d$ steps, since it never uses more than $(k-1)d$ vertices of each cluster.
		We can then finish by covering $G-V(M)$ with a perfect fractional matching as describe before.
\end{proof}}

Finally, we need the following  lemma that bounds the lengths of cycles in tilings.
Let us call a fractional $\ell$-cycle tiling \emph{$m$-bounded} if it only involves cycles on at most $m$ vertices.

\begin{lemma}\label{obs:boundedn-order}
	Let $R$ be an $s$-vertex $k$-graph, which admits a perfect fractional $\ell$-cycle tiling.
	Then $R$ also has a $k^2s^{\ell}$-bounded fractional $\ell$-cycle tiling.
\end{lemma}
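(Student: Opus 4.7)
The plan is to apply linear programming duality in order to reduce the problem to the claim that any linear functional which is nonnegative on short-cycle homomorphisms is automatically nonnegative on every cycle homomorphism. Set $M = s^\ell$; then an $\ell$-cycle on at most $M$ edges has at most $M(k-\ell) \leq k^2 s^\ell$ vertices. Suppose for contradiction that $R$ admits no $(k^2 s^\ell)$-bounded perfect fractional $\ell$-cycle tiling. The existence of such a tiling is a feasibility question for an LP with $s$ equality constraints (one per vertex of $R$) and one variable per homomorphism from an $\ell$-cycle with at most $M$ edges into $R$, and there are only finitely many such homomorphisms. By Farkas' lemma there exists $y \in \REALS^{V(R)}$ with $\sum_{v \in V(R)} y(v) < 0$ and $y \cdot \vn_\phi \geq 0$ for every such short homomorphism $\phi$. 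If I can promote this nonnegativity to all homomorphisms from arbitrary $\ell$-cycles into $R$, then pairing $y$ with the hypothesised perfect fractional tiling $\omega$ yields
\[
	0 > \sum_{v \in V(R)} y(v) = y \cdot \Bigl(\sum_{\phi} \omega(\phi)\vn_\phi\Bigr) = \sum_\phi \omega(\phi)\,(y \cdot \vn_\phi) \geq 0,
\]
a contradiction.

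I will prove the extension by strong induction on the number of edges $t$ of the cycle $C$ serving as the domain of $\phi$. The base case $t \leq M$ is exactly the hypothesis on $y$. For the inductive step, assume $t > M$. Label the edges of $C$ cyclically as $e_1, \dotsc, e_t$ and, for each $i$, let $\sigma_i \in V(R)^\ell$ be the ordered $\ell$-tuple obtained by applying $\phi$ to the overlap of $e_i$ and $e_{i+1}$ (with the cyclic ordering inherited from $C$). Each $\sigma_i$ has distinct entries, since $\phi$ is injective on every edge, so the $\sigma_i$ range over a set of size at most $s(s-1) \cdots (s-\ell+1) \leq s^\ell = M$. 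Pigeonhole then yields $1 \leq i < j \leq t$ with $\sigma_i = \sigma_j$.

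I now split $C$ at these two coinciding overlap positions into two shorter $\ell$-cycles: $W_1$ carries the edges $e_{i+1}, \dotsc, e_j$ with the first and last $\ell$-tuple identified (which is consistent exactly because $\sigma_i = \sigma_j$), and $W_2$ carries the remaining edges, closed up in the analogous way. A short case analysis, again using injectivity of $\phi$ on a single edge of $C$, rules out $\sigma_a = \sigma_{a+1}$ for any cyclically consecutive $a$, so both $W_1$ and $W_2$ have at least two edges and are genuine $\ell$-cycles. The map $\phi$ restricts to homomorphisms $\phi_h \colon W_h \to R$ for $h \in \{1,2\}$, and a direct position-count (accounting for the $2\ell$ positions of $C$ that lie in the two cut overlaps, each of which is identified into a single vertex of $W_h$ but contributes twice to $\vn_\phi$ and twice to $\vn_{\phi_1} + \vn_{\phi_2}$) gives the key identity $\vn_\phi = \vn_{\phi_1} + \vn_{\phi_2}$. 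Since $W_1$ and $W_2$ each have strictly fewer edges than $C$, the inductive hypothesis gives $y \cdot \vn_{\phi_h} \geq 0$, whence $y \cdot \vn_\phi \geq 0$, closing the induction.

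The main technical hurdle is the bookkeeping at the cut: one must verify both that $W_1$ and $W_2$ are honest $\ell$-cycles (i.e., each has at least two edges) and that the additive identity $\vn_\phi = \vn_{\phi_1} + \vn_{\phi_2}$ holds on the nose. Both points hinge on the injectivity of $\phi$ on every edge of $C$, which prevents pigeonhole collisions at cyclically adjacent positions and guarantees that the $\ell$-fold identifications used to close $W_1$ and $W_2$ into cycles are consistent with $\phi$.
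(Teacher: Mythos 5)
Your argument is, at its combinatorial core, the same as the paper's: both proofs hinge on the observation that a homomorphism from an $\ell$-cycle with more than $s^{\ell}$ edges must repeat an ordered $\ell$-tuple at two of its consecutive-edge overlaps, and can therefore be split into homomorphisms $\phi_1,\phi_2$ from two shorter $\ell$-cycles with $\vn_\phi=\vn_{\phi_1}+\vn_{\phi_2}$. The paper applies this splitting directly to the given tiling $\omega$, transferring the weight of each overlong cycle to its two halves and iterating; you instead run the splitting inside an induction that upgrades a Farkas certificate for short cycles to all cycles. Both packagings are legitimate, and the duality wrapper is a reasonable alternative to arguing that the paper's weight-shifting iteration terminates.

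The one step that does not hold as written is the claim that excluding $\sigma_a=\sigma_{a+1}$ for cyclically consecutive $a$ already guarantees that $W_1$ and $W_2$ are genuine $\ell$-cycles. Having at least two edges is not enough: an $\ell$-cycle with $t_h$ edges has $t_h(k-\ell)$ vertices, so it is a legitimate $k$-uniform hypergraph only when $t_h(k-\ell)\ge k$, and for $\ell>k/2$ (precisely the regime this paper is after) a two-edge piece has $2(k-\ell)<k$ vertices. You must therefore rule out coincidences $\sigma_i=\sigma_j$ whenever either cyclic arc between the two overlaps spans fewer than $k$ positions, not merely when the overlaps are adjacent. Fortunately the same injectivity argument delivers this stronger statement: writing the positions so that $e_{i+1}$ occupies $v_{i(k-\ell)+1},\dots,v_{i(k-\ell)+k}$, if $0<(j-i)(k-\ell)\le k-1$ then the first entries of the two overlaps come from the distinct vertices $v_{i(k-\ell)+1}$ and $v_{j(k-\ell)+1}$ of the single edge $e_{i+1}$ of $C$, so $\phi$ separates them and $\sigma_i(1)\neq\sigma_j(1)$; arguing symmetrically along the other arc shows that any coincidence automatically satisfies $t_1(k-\ell)\ge k$ and $t_2(k-\ell)\ge k$. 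With that repair (plus, if one insists, a word on the degenerate case $t_h(k-\ell)=k$, which can only occur when $k-\ell$ divides $k$) your proof is complete; the remaining bookkeeping, including the identity $\vn_\phi=\vn_{\phi_1}+\vn_{\phi_2}$ and the Farkas step, is correct.
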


\begin{proof}
	Let $\cF$ be the family of all $k$-uniform $\ell$-cycles, and let $\omega \colon \Hom(\cF,R) \to [0,1]$ be a perfect fractional $\cF$-tiling.
	Suppose that $\omega(\phi) > 0$ for some homomorphism $\phi\colon V(C) \rightarrow R$, where $C$ is a cycle with $m=v(C) > k^2 s^{\ell}$.
	Denote the vertex ordering of $C$ by $v_1,\dots,v_m$.
	For each $1 \leq i \leq m$ let $X_i = \{v_{i}, \dotsc, v_{i+\ell}\}$, with indices modulo $m$.
	Note that there are at least $m/k \geq k s^\ell$ many sets $X_i$ with $\deg_C(X_i) = 2$.
	On the other hand, there are at most $s^\ell$ ordered $\ell$-sets in~$V(R)$.
	By the pigeonhole principle we can find two disjoint ordered $\ell$-sets $X_i, X_j \subset V(C)$ with $\deg_C(X_i)=\deg_C(X_j)=2$, and such that $\phi(x_{j+q}) = \phi(x_{i+q})$ for all $1 \leq q \leq \ell$.
	We can use this to find two shorter $\ell$-cycles $C'$ and $C''$, together with respective homomorphisms $\phi', \phi''$ and a perfect fractional $\cF$-tiling $\omega'$ such that $\omega'(\phi') = \omega'(\phi'') = \omega(\phi)$ and $\omega'(\phi)=0$.
	Iterating this gives the desired fractional tiling.
\end{proof}

\begin{proof}[Proof of \cref{lem:perfect-tiling-allocation}]
	Introduce $q$ with {$1/k ,\, 1/s \gg 1/q \gg \eta$}.
	For a $k$-graph $F$, $\phi \in \Hom(F,R)$ and injective $\theta \in \Hom(F,R(\cV))$, we say that the copy $\theta (F) \subset R(\cV)$ of $F$ is \emph{$\phi$-partite} if $\theta(v) \in V_{\phi(v)}$ for every vertex $v\in V(F)$.
	By \cref{lem:lattice-completeness}, there is a $k$-partite $k$-uniform $\ell$-cycle $D$ with $v(D) \leq k^4$ such that the lattice $\cL_{D}(R)$ is complete.
	We first pick a $\{D\}$-tiling $\cT_{\res} \subset R(\cV)$ that acts as a `reservoir' by selecting $q$ disjoint $\phi$-{partite} copies of $D$ for every $\phi \in \Hom(D,R)$.
	Next, we use \cref{prop:gcd-cycles} to pick an $\ell$-cycle tiling $\cT_{\divn} \subset R(\cV)$ that contains $s^s$ copies of a cycle on $k^2\ell+p$ edges for each $k \leq p \leq s^s$. 
	Our plan is to use these cycles to adjust the divisibility of the number of leftover vertices later on.
	Let $\cV'$ be obtained from $\cV$ by deleting the vertices of $\cT_{\res} \cup \cT_{\divn}$.
	Note that $\cV'$ is still $(1\pm 2\eta)m$-balanced.
	
	Let $\mathcal{C}_{s}$ be the family of $\ell$-cycles with at most $k^2 s^\ell$ vertices.
	Next, we cover most of the vertices of~$R(\cV')$ with a $\mathcal{C}_{s}$-tiling $\cT_{\cover}$.
	We would like to find such a tiling by ``blowing up'' a fractional cycle tiling in $R$, but this does not work since $\cV'$ is not perfectly balanced; we deal with this issue as follows.
	First, we pick a set $X \subseteq \bigcup \cV'$ of size at most $s-2$, so the remaining number of vertices, after removing $X$, is divisible by $s-1$.
	Let $\cV''=\{V_x''\}_{x \in V(R)}$ be the resulting partition, and note that it is still $(1\pm 3\eta)m$-balanced.
	Let $K$ be the complete $(s-1)$-uniform graph on vertex set $V(R)$.
	By applying \cref{lem:balancing-matching} to $K(\cV'')$, we find a perfect matching $M \subseteq K(\cV'')$.
	For each $x \in V(R)$, let~$m_x$ be the number of edges in $M$ not intersecting the cluster $V_x'' \in \cV''$.
	We can partition $\bigcup \cV''$ into sets $\{S_x \colon x \in V(R)\}$, where each $S_x$ intersects each cluster $V''_y$ in $m_x$ vertices if $y \neq x$, and does not intersect $V''_x$ at all.
	Then the sets $S_x$ induce an $m_x$-balanced partition $\mathcal{S}_x$ whose clusters correspond to the vertices of $R-x$; and the vertices in all partitions cover all of $\bigcup \mathcal{V}'$.
	Since $R \in \Del_1 (\dspa_\ell)$ and by \cref{obs:boundedn-order}, for each $1 \leq x \leq s$ there is a perfect fractional $\mathcal{C}_{s}$-tiling $\omega_x$ of $(R - x)(\mathcal{S}_x)$.
	It follows that $\omega = \omega_1 + \dots + \omega_s$ is a perfect fractional $\mathcal{C}_{s}$-tiling of $R(\cV'')$.
	
	Now let $C \in \mathcal{C}_{s}$, and let $\phi \in \Hom(C, R)$ be a homomorphism.
	We set $w_\phi$ to be the sum of~$\omega(F)$ over all $\phi$-partite copies $F \subset R(\cV'')$ of $C$.
	We obtain the $\ell$-cycle tiling $\cT_{\cover} \subset R(\cV'')$ by selecting~$\lfloor w_\phi  \rfloor$ many vertex-disjoint $\phi$-partite copies of the $\ell$-cycle $C$, for each possible choice of $\phi \in \Hom(\cC, R)$.
	Note that here we use the fact that $m$ is much larger than $s$ and $k$, and each $C$ has at most $k^2 s^\ell$ vertices.
	Let~$\cV'''$ be obtained from $\cV'$ by deleting the vertices of $\cT_{\cover}$ and adding the vertices of $X = \bigcup \cV' \setminus \bigcup \cV''$.
	So~$\cV'''$ now corresponds to the set of vertices of $\cV$ not yet covered by $\ell$-cycles.
	Note that $\cV'''$ contains at most $|\Hom(\cC,R)| + |X| \leq (k^2 s^\ell)^s + s-2 \leq s^{2 \ell s}$ vertices, where the error arises  from rounding $\lfloor w_\phi \rfloor$ at each $\phi \in \Hom(\cC,R)$.
	Lastly, we ensure that the number of vertices in~$\cV'''$ is divisible by~$v(D)$.
	Since the initial number of vertices was divisible by $k-\ell$, each $\ell$-cycle also has a number of vertices divisible by $k-\ell$ and $v(D) \leq k^4$, we can achieve this just by removing at most $k^5$ cycles of $\cT_{\divn}$.
	(For convenience, we keep the names $\cT_{\divn}$ and $\cV'''$.)
	Using a crude bound, we see that after this change $\cV'''$ contains at most $2s^{2 \ell s}$ vertices.
	
	To finish, we construct a $\{D\}$-tiling $\cT_{\ominus} \subset R(\cV)$ that contains the remaining vertices.
	This is done by picking an $\ell$-cycle tiling $\cT_{\oplus} \subset \cT_{\res}$ from the reservoir such that the vertices $V(\cT_{\oplus}) \cup \bigcup \cV'''$ span a perfect $\ell$-cycle tiling $\cT_{\ominus}$ in $R(\cV)$.
	Formally, let $\mathbf {b} \in \INTS^{V(R)}$ count the size of each part of $\cV'''$, that is $\mathbf {b} (x) = |V_x'''|$ for every $x \in V(R)$.
	Then $\sum_{x \in  V(R)} \mathbf {b}(x)$ is divisible by~$v(D)$ by construction of $\cT_{\cover}$.
	{It follows that $\mathbf {b}\in \cL_D(R)$.}
	By the choice of $D$, the lattice $\cL_{D}(R)$ is complete, so there are integers~$c_\phi$, one for every $\phi\in \Hom(D,R)$, such that $\mathbf {b} = \sum_{\phi \in \Hom(D,R)} c_\phi \vn_\phi$.
	Since $1/k, 1/s \gg 1/q$, we can ensure $|c_\phi| \leq q$ holds for each $\phi\in \Hom(D,R)$.
	Let $\Phi_{\oplus}$ and $\Phi_{\ominus}$ denote the sets of homomorphisms $\phi\in \Hom(D,R)$ for which~$c_\phi$ is positive and negative, respectively.
	
	Let $\cT_{\ominus} \subset \cT_{\res}$ be a subtiling obtained by selecting $|c_\phi|$ many $\phi$-partite copies of $D$ from $\cT_{\res}$ for each $\phi \in \Phi_{\ominus}$. Since
	\begin{align*}
		\sum_{\phi \in \Phi_{\oplus}} c_\phi \vn_\phi  = \mathbf {b} - \sum_{\phi \in \Phi_{\ominus}} c_\phi \vn_\phi = \mathbf {b} + \sum_{\phi \in \Phi_{\ominus}} |c_\phi| \vn_\phi,
	\end{align*}
	we may finish by selecting a $\{D\}$-tiling $\cT_{\oplus} \subset R(\cV)$ on the vertices of $V(\cT_{\ominus}) \cup \bigcup \cV''$ by adding $|c_\phi|$ many $\phi$-partite copies of $D$ for every  $\phi \in \Phi_{\oplus}$.
\end{proof}

\subsection{Hamilton cycles}

The following two simple lemmata finish our preparations to embed Hamilton paths into suitable blow-ups.

For a $k$-graph $G$, an $\ell$-cycle $C$ 
and a homomorphism $\phi\colon V(C) \to G$, we call the image $\phi(C) \subset G$ a \emph{closed $\ell$-walk}.
The \emph{length} of the closed $\ell$-walk $\phi(C)$ is the number of edges of $C$.
Given a $k$-graph~$G$, we say that an $\ell$-tuple $(v_1,\dots,v_\ell)$ is \emph{supported} in $G$ if $\{v_1,\dots,v_\ell\} \subset e$ for some edge $e \in G$.

\begin{lemma}\label{lem:strong-connectivity}
	For $1 \leq \ell \leq k-1$ with $k - \ell$ not dividing $k$, let $G$ be a $k$-graph and $C$ be an $\ell$-component of~$G$.
	Then any two $\ell$-tuples supported in $C$ are on a common closed $\ell$-walk in $G$ of length at most $k^4 e(C)$.
\end{lemma}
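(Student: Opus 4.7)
The plan is to first use the $\ell$-connectivity of $C$ to produce a combinatorial sequence of edges linking $e_{\vec{a}}$ and $e_{\vec{b}}$, and then to upgrade this sequence to a proper $\ell$-walk by inserting short orientation-adjusting internal walks inside each edge. Let $e_{\vec{a}}, e_{\vec{b}} \in C$ be edges containing the sets of $\vec{a}$ and $\vec{b}$, respectively. Since $C$ is an $\ell$-component, there exist edge-sequences $e_{\vec{a}} = f_0, f_1, \dots, f_t = e_{\vec{b}}$ and $e_{\vec{b}} = g_0, g_1, \dots, g_{t'} = e_{\vec{a}}$ in $C$, each of length at most $e(C)-1$, with $|f_i \cap f_{i+1}|, |g_j \cap g_{j+1}| \geq \ell$. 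Concatenating them yields a cyclic edge-sequence of total length at most $2\,e(C)$ that starts and ends at $e_{\vec{a}}$ and passes through $e_{\vec{b}}$.

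To turn this combinatorial chain into a genuine $\ell$-walk, I select, between each consecutive pair of edges, an $\ell$-subset of their intersection to serve as a junction set. The remaining task is to traverse each edge $e$ of the chain along an internal $\ell$-walk beginning at a specified ordered $\ell$-subset of $e$ (inherited from the previous edge's junction) and ending at another specified ordered $\ell$-subset of $e$ (to be handed to the next). The crucial tool is a \emph{junction lemma}: within a single $k$-edge $e$, any two ordered $\ell$-subsets are connected by an $\ell$-walk of length at most $k^2$ that uses only the edge $e$. This is exactly where the hypothesis $k-\ell \nmid k$ enters the picture: the orientations reachable from a fixed $\ell$-tuple within $e$ are the orbits of the natural shift-by-$(k-\ell)$ action on orderings of the $k$ vertices of $e$, and when $k-\ell$ divides $k$ these orbits split non-trivially so that some orientations are simply unreachable (for example, with $k=4$ and $\ell=2$ one cannot move from $(a,b)$ to $(a,c)$ within the edge $\{a,b,c,d\}$). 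When $k-\ell \nmid k$ the shift action is transitive on orderings, and a short BFS over the state space then gives the claimed $O(k^2)$ diameter. Two applications of the same lemma at the boundary edges of the chain allow me to ensure that the walk starts with $\vec{a}$ in the prescribed orientation and that $\vec{b}$ appears as an intermediate junction in the prescribed orientation.

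Concatenating the internal walks along the chain produces a closed $\ell$-walk in $G$ visiting both $\vec{a}$ and $\vec{b}$. Since each of the at most $2\,e(C)$ base edges contributes at most $k^2$ edges to the final walk, its length is at most $2k^2\,e(C) \leq k^4\,e(C)$, as required. The main obstacle is establishing the junction lemma with an effective bound; the state space of ordered $\ell$-subsets of $e$ has size at most $k!$, and under $k-\ell \nmid k$ the associated transition digraph is strongly connected with bounded out-degree, from which the polynomial diameter bound follows by a direct orbit/connectivity analysis.
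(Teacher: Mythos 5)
Your overall architecture --- chain the edges of $C$ through $\ell$-sized junctions using connectivity of the $\ell$-line graph, and reduce everything to a ``junction lemma'' asserting that any two ordered $\ell$-tuples inside a single $k$-edge are joined by a short $\ell$-walk using only that edge --- is exactly the structure of the paper's proof, which handles the single-edge case first and then inducts on $e(C)$. The problem is that you have not actually proved the junction lemma, and it is the only place where the hypothesis $k-\ell \nmid k$ does any work. Your stated reason, that ``the shift-by-$(k-\ell)$ action on orderings of the $k$ vertices is transitive when $k-\ell \nmid k$,'' is not correct as literally written: shifting a \emph{fixed} cyclic ordering of $e$ by $k-\ell$ reaches only the coset $\{0,\,k-\ell,\,2(k-\ell),\dots\} \bmod k$, which has size $k/\gcd(k,k-\ell)$ and need not be everything (e.g.\ $k=6$, $\ell=2$). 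The actual reachability digraph is richer --- from a state $(a_1,\dots,a_\ell)$ one appends the complementary $k-\ell$ vertices of $e$ in an arbitrary order and reads off the new final $\ell$-tuple --- and showing that this digraph is strongly connected precisely when $k-\ell\nmid k$ is a genuinely nontrivial combinatorial fact. Likewise, ``a short BFS over the state space'' of size up to $k!$ yields a priori only a diameter bound of $k!$, not $O(k^2)$; you acknowledge this gap yourself but do not close it.

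The fact you need is exactly \cref{prop:KMO-strongly-connected} (Kühn--Mycroft--Osthus, Proposition 3.1): projecting the spanning $(f_1,f_2,\ell)$-path in the complete $k$-partite blow-up with parts of size $k\ell(k-\ell)+1$ down onto the $k$ vertices of a single edge gives precisely your junction walk, of length at most roughly $k^2\ell + O(k) \le k^3$, which still yields the stated $k^4 e(C)$ bound after concatenation over the at most $2e(C)$ edges of your chain. With that citation in place of your ``orbit/BFS'' sketch, your argument is complete and coincides with the paper's. As written, however, the central step is asserted rather than proven, so the proof has a genuine gap.
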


\begin{proof}
	By \cref{prop:KMO-strongly-connected} this is true if $C$ consists of a single edge.
	The general case follows by a simple induction on the number of edges in $C$.
\end{proof}

\begin{lemma}\label{lem:cover}
	Suppose that $R$ is a $k$-graph and $x \in V(R)$ such that $R, R-x \in \dcon_\ell$.
	Let $T$ be an $\ell$-path with $2k < v(T) \leq 3k$, and let $y \in V(T)$ be not on the endtuples of $T$.
	Then there is a $\phi \in \Hom(T,R)$ such that $\phi^{-1}(x)=\{y\}$.
\end{lemma}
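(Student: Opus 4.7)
Let $e_1,\dots,e_m$ denote the edges of $T$ and let $Q=e_a\cup\cdots\cup e_b$ be the sub-$\ell$-path consisting of those edges that contain $y$; the remaining (possibly empty) outer sub-$\ell$-paths are $T_L=e_1\cup\cdots\cup e_{a-1}$ and $T_R=e_{b+1}\cup\cdots\cup e_m$. The plan is to map $Q$ into a single edge $f^*\in R$ containing $x$ (with $y\mapsto x$), and to map each of $T_L, T_R$ into a single edge of $R-x$, matching up consistently at the two boundary $\ell$-tuples. Since each piece maps into a single edge, the only vertex of $T$ that can be sent to $x$ is $y$.

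First I would locate the compatibility edges. Because $R\in\dcon_\ell$ is $\ell$-connected and $R-x$ is non-empty, any path in the $\ell$-line graph of $R$ joining an edge containing $x$ to an edge of $R-x$ must include a consecutive pair $(f^*,g^*)$ with $x\in f^*$, $x\notin g^*$, and $|f^*\cap g^*|\geq\ell$. By analogous reasoning pick $h^*\in R-x$ sharing $\ell$ vertices with $f^*$ (often one may take $h^*=g^*$), and fix $\ell$-subsets $\sigma_L\subseteq f^*\cap g^*$ and $\sigma_R\subseteq f^*\cap h^*$ of $f^*\setminus\{x\}$; these will be the target images of the two boundary $\ell$-tuples $e_{a-1}\cap e_a$ and $e_b\cap e_{b+1}$.

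Now define $\phi|_Q$ edge by edge: each $\phi|_{e_i}\colon e_i\to f^*$ is a bijection with $y\mapsto x$, and the only constraint between consecutive edges is that the image of $e_i\cap e_{i+1}$ must agree. This leaves freedom on the $k-\ell$ ``new'' vertices introduced at each step, which suffices to arrange $\phi(e_{a-1}\cap e_a)=\sigma_L$ and $\phi(e_b\cap e_{b+1})=\sigma_R$. Extend $\phi$ to $T_L$ by mapping every one of its edges bijectively onto $g^*$, chaining the bijections so that the right boundary of $T_L$ matches $\sigma_L$; and perform the analogous construction on $T_R$ using $h^*$ and $\sigma_R$. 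The resulting $\phi\colon V(T)\to V(R)$ is a homomorphism whose edge images all lie in $\{f^*,g^*,h^*\}$, so $\phi^{-1}(x)=\{y\}$, since $x\in f^*\setminus (g^*\cup h^*)$ and within each bijection $e_i\to f^*$ only $y$ is sent to $x$.

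The main obstacle is the degenerate sub-case $a=b$ where both $T_L$ and $T_R$ are non-empty: there $\sigma_L$ and $\sigma_R$ are images under the \emph{same} bijection $\phi|_{e_a}\colon e_a\to f^*$, so they are forced to be disjoint $\ell$-subsets of the $(k-1)$-set $f^*\setminus\{x\}$, each supported in an edge of $R-x$. In this case $y$ lies strictly between the first and last $\ell$ positions of $e_a$, forcing $2\ell\leq k-1$, so disjointness is at least geometrically feasible. The required pair of edges $g^*,h^*\in R-x$ whose intersections with $f^*$ can be chosen disjoint in $f^*\setminus\{x\}$ is produced using $R-x\in\dcon_\ell$ together with \cref{lem:strong-connectivity} applied to $R-x$; handling this sub-case cleanly is the technical heart of the proof.
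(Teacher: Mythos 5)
Your construction has the same skeleton as the paper's proof: map the portion of $T$ around $y$ into a single edge $f^*\ni x$ (the paper realises this as the palindromic walk $v_k,\dots,v_2,x,v_2,\dots,v_k$ rather than by chaining bijections), and map the outer portions into edges of $R-x$ meeting $f^*$ in at least $\ell$ vertices; your first paragraph is precisely the paper's argument for producing the pair $(f^*,g^*)$. The problem is the degenerate case you flag at the end. It is a genuine gap, not a routine technicality, and it cannot be closed from the hypotheses $R,R-x\in\dcon_\ell$ alone: you need two edges $g^*,h^*$ of $R-x$ whose traces on $f^*\setminus\{x\}$ contain \emph{disjoint} $\ell$-sets, and neither $\ell$-connectivity of $R-x$ nor \cref{lem:strong-connectivity} produces them. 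Concretely, take $k=5$, $\ell=2$, $V(R)=\{x,a,b,c,d,\alpha,\beta,\gamma,\delta\}$, $k$-edges $f^*=\{x,a,b,c,d\}$, $A=\{a,b,\alpha,\beta,\gamma\}$, $B=\{c,\alpha,\beta,\gamma,\delta\}$, $C=\{d,\alpha,\beta,\gamma,\delta\}$ and $\ell$-level $\{\{a,b\}\}$. Then $\adh_2(R)$ and $\adh_2(R-x)$ are single spanning $2$-components, yet the only edge of $R-x$ meeting $f^*$ in at least two vertices is $A$, and $A\cap f^*=\{a,b\}$, so no disjoint pair $\sigma_L,\sigma_R$ exists. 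Worse, for $T$ the $11$-vertex $2$-path and $y$ its middle vertex (which lies only in the middle edge of $T$ and is not on an endtuple), no homomorphism with $\phi^{-1}(x)=\{y\}$ exists at all for this $R$: the middle edge must map onto $f^*$, each outer edge must then map onto an edge of $R-x$ meeting $f^*$ in at least $\ell$ vertices, i.e.\ onto $A$, forcing the images of the two disjoint boundary $\ell$-tuples of the middle edge both to equal $\{a,b\}$ and contradicting injectivity on that edge. So the sub-case is an actual obstruction; any repair must use more than the stated hypotheses (e.g.\ the $\Del_k(\dcon_\ell)$ robustness that is available where the lemma is invoked) or restrict the admissible positions of $y$.

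Two further points. First, your claim that the chained bijections along $Q$ leave enough freedom to prescribe both $\sigma_L$ and $\sigma_R$ is only clear when $2\ell\le k$ and $b>a$: if $2\ell>k$ the first and last $\ell$ positions of $e_b$ overlap, so part of $\sigma_R$ is already forced by $\phi|_{e_{b-1}}$, and if $\ell>k/2$ then $e_{a-1}$ also meets $e_{a+1}$, so $\phi(e_{a-1})=g^*$ is constrained by more than $\sigma_L$; these interactions need an explicit argument. Second, for fairness of comparison: the paper's own two-line construction is no more complete at exactly this point, since in the palindrome any $k$-window whose image contains $x$ strictly in its interior repeats a vertex, so it too only works when every edge of $T$ containing $y$ sends $y$ to an endpoint of its image. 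Your difficulty is located precisely where the lemma itself is delicate, but as written your proposal does not resolve it.
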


\begin{proof}
	We first show that there are edges~$e,f$ in $R$ that intersect in at least $\ell$ vertices such that $x \in f$ and $x \notin e$.
	By assumption (since a $k$-graph in $\dcon_\ell$ must contain at least one edge) there are distinct edges $e',f' \in R$ such that $x \in f'$ and $x \notin e'$.
	Moreover, there is a closed $\ell$-walk $W$ containing both $f'$ and $e'$, by \cref{lem:strong-connectivity}.
	We can then take $f$ to be the last edge of $W$ that contains $x$ when going from $x$ to a vertex in $e$ along $W$, and $e$ to be the edge that follows right after $f$ in $W$.
	
	Let $f = xv_2 \dotsb v_k$ and suppose that $v_i,\dots,v_k \subset e$ for some $i \leq k-\ell+1$.
	We then start the sequence of our closed $\ell$-walk with $v_k,\dots,v_2,x,v_2,\dots,v_k$.
	We continue this sequence on both ends with vertices in $e$ until it corresponds to the image of $T$ under a homomorphism $\phi \in \Hom(T,R)$.
\end{proof}

\begin{proof}[Proof of \rf{prp:allocation-Hamilton-cycle}]
	Let $f_1,f_2 \in (R(\cV)-V^\ast)^{(\ell)}$ be vertex-disjoint.
	We have to show that $R(\cV)$ contains a \tight Hamilton $(f_1,f_2,\ell)$-path.
	By assumption, all clusters of $\cV$ but $V^\ast$ have size $(1\pm \eta)m$, and $V^\ast$ has size $1$.
	
	Let us write $V^\ast = V_{{x^\ast}} = \{v^\ast\}$ with $x^\ast \in V(R)$.
	{For every oriented $k$-edge $e = (v_1, \dotsc, v_k)$ in $V(R-x^\ast)$,
	take a $k$-vertex $\ell$-path $P_e \in R(\cV)$, consisting of a single edge.}
	Since $m$ is much larger than $k$ and $s$, we can assume that these paths are pairwise disjoint and also disjoint of $f_1,f_2$.
	
	We handle the exceptional vertex $v^\ast$ as follows.
	Use \cref{lem:cover} to pick an $\ell$-path in $R(\cV-V^\ast)$ of order at most $3k\leq k^4$ that contains $v^\ast$.
	We can ensure that~$P^\ast$ is disjoint from the paths of type $P_e,f_1,f_2$ specified above.

	{Since $R \in \Del_k(\con_\ell)$, the $k$-graphs $\adh_\ell(R)$ and $\adh_\ell(R-{x^\ast})$ are \tightly $\ell$-connected.}
	By \cref{lem:strong-connectivity}, any two $\ell$-tuples of $R-{x^\ast}$ are connected by an \tight $\ell$-walk~$W$ of order at most $k^4e(R) \leq s^{2k}$.
	This allows us to connect $f_1$ and $f_2$, together with all the $\ell$-paths $P_e$, and also $P_{x^\ast}$, to one `skeleton' $\ell$-path $P_{\text{skel}}$, which begins with $f_1$ and ends with $f_2$.
	Since we need to add at most $e(R)+3 \leq s^k$ walks to connect everything,  $P_{\text{skel}}$ has at most, say, $8^{s^k}$ vertices.
 
	Let $\cV'$ be obtained from $\cV$ by deleting the vertices of $P_{\text{skel}}$ (including the now empty exceptional cluster).
	Note that $\cV'$ is still $(1\pm 2\eta)m$-balanced.
	Since $P_{\text{skel}}$ is an $\ell$-path, its number of vertices is congruent to $k$ modulo $k - \ell$, and the same is true of $\cV$ by assumption.
	Hence the total remaining number of vertices in $\cV'$ is divisible by $k - \ell$.
	By \cref{lem:perfect-tiling-allocation} and the assumption that $R \in \Del_k (\dspa_\ell)$, there is a perfect $\ell$-cycle tiling $\cT$ of $(R-x^\ast)(\cV')$.
	
	To finish, we incorporate the cycles of $\cT$ into the skeleton path, one at a time.
	Take a cycle $C \in \cT$ not yet incorporated into the path, and let $e \in E(C)$ be any of its oriented edges.
	Suppose that $C = v_1 v_2 \dotsb v_r$, where the first $k$ vertices correspond precisely to $e$.
	By construction, we can write $P_{\text{skel}} = f_1 \dotsb y_1 y_2 \dotsb y_k \dotsb f_2$, where $y_1 y_2 \dotsb y_k$ are the vertices of the $\ell$-path $P_e$.
	Then we can insert $C$ in the middle by considering the $\ell$-path
	\[ f_1 \dotsb y_1 y_2 \dotsb y_{k-\ell} v_{k-\ell+1} \dotsb v_k v_1 v_2 \dotsb v_{k-\ell} y_{k - \ell + 1} \dotsb y_k \dotsb f_2, \]
	which covers precisely the vertices of $P_{\text{skel}}$ and $C$ and is still an $\ell$-path going from $f_1$ to $f_2$.
	Note that this new path still contains an oriented edge for every possible oriented edge of $R$.
	This means we can iterate this argument over all the cycles of $\cT$, incorporating one cycle at a time, to obtain the desired Hamilton $\ell$-path.
\end{proof}

\section{Reducing to hypergraphs of smaller uniformity}\label{sec:squashing}

In this section, we prove \cref{thm:generalsquash,cor:squash}.
Let us begin with the latter.

\begin{proof}[Proof of \cref{cor:squash}]
	Recall that an $h$-cycle with $k-h$ dividing $k$ contains (if its number of vertices is divisible by $k$) a perfect matching as a subgraph.
	Hence, the lower bound follows by considering a $qk$-graph without a perfect matching and $q\ell$-degree close to $1/2$~\cite{HPS09}.
	
	For the upper bound, note that $\th_{d}^{\ham}(qk,q\ell) \leq \th_{q(k-1)}^{\ham}(qk, q\ell) \leq \th_{k-1}^{\ham}(k,\ell) \leq 1/2$ where the first bound follows from monotonicity (\cref{fact:monotone-degrees}), the second inequality follows from \cref{thm:generalsquash}, and the last equality corresponds to known bounds for codegree thresholds for tight cycles~\cite{RRS08a} (as a tight cycle of order divisible by $k-\ell$ contains a spanning loose cycle).
\end{proof}

It remains to show \cref{thm:generalsquash}.
Suppose $k, n \geq 1$ and $q \geq 2$ are given.
Let $H$ be a $qk$-uniform hypergraph on $qn$ vertices.
Given a partition $\cQ = \{ B_1, \dotsc, B_n \}$ of $V(H)$ into sets of size $q$ each, we let the \emph{squashed hypergraph} $H_{\cQ}$ be the $k$-graph on vertex set $\{ 1, \dotsc, n \}$ where $I \in H_{\cQ}$ if and only if $\bigcup_{i \in I} B_i \in H$.
We show that random squashing of hypergraphs preserve densities with high probability.

\begin{lemma} \label{lemma:randomsquash}
	Let $k, n \geq 1$, $\eps > 0$ and $q \geq 2$ be given, let $H$ be a $qk$-uniform hypergraph on $qn$ vertices, and let $\cQ$ be a partition of $V(H)$ into sets of size $q$ chosen uniformly at random.
	Then, with probability at least $1 - 2\exp(- \eps^2 n/(16q))$, we have
	\( |H_{\cQ}| \geq  \big({\binom{n}{k}}/{\binom{qn}{qk}}\big) |H| - \eps n^k. \)
\end{lemma}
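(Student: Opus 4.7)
The plan is to compute $\mathbb{E}|H_{\cQ}|$ exactly and then apply a standard concentration inequality for random permutations to control the lower tail.

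For the expectation, I would write $|H_{\cQ}| = \sum_{e \in H} X_e$, where for each $e \in H$ the indicator $X_e$ equals $1$ precisely when $e = \bigcup_{i \in I} B_i$ for some $I \in \binom{[n]}{k}$; such an $I$ is unique since the blocks of $\cQ$ are disjoint. By symmetry $\Pr[X_e = 1]$ depends only on $|e| = qk$, and a direct count of the partitions of $[qn]$ into $n$ unordered blocks of size $q$ in which a fixed $qk$-set splits into exactly $k$ blocks gives $\Pr[X_e = 1] = \binom{n}{k}/\binom{qn}{qk}$ (one separately partitions $e$ into $k$ blocks and the complement into $n-k$ blocks, then divides by the total number of partitions). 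Linearity then yields $\mathbb{E}|H_{\cQ}| = \bigl(\binom{n}{k}/\binom{qn}{qk}\bigr)|H|$, which matches the main term in the claimed inequality.

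For concentration, I would sample $\cQ$ via a uniformly random permutation $\pi$ of $[qn]$ by setting $B_i = \{\pi((i-1)q+1), \dotsc, \pi(iq)\}$; every unordered partition arises from the same number of permutations, so this produces the desired distribution. If $\pi'$ differs from $\pi$ by a single transposition, then at most two blocks change, each by at most one vertex, so the only members of $\binom{[n]}{k}$ whose $H_{\cQ}$-status can change are those containing one of the at most two altered block indices. Thus $|H_{\cQ}|$ has a swap-Lipschitz constant of at most $c := 2\binom{n-1}{k-1} \leq 2n^{k-1}$ as a function of $\pi$. Feeding this into the standard Azuma-type version of McDiarmid's inequality for random permutations on $N = qn$ positions with deviation $t = \eps n^k$ produces a two-sided bound of the form $2\exp\bigl(-\Omega(\eps^2 n^{2k}/(Nc^2))\bigr) = 2\exp(-\Omega(\eps^2 n / q))$, well within the target $2\exp(-\eps^2 n/(16q))$.

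I do not foresee any substantive obstacle: both the combinatorial count for the expectation and the swap-Lipschitz estimate are routine, and the only point requiring care is tracking the numerical constant in the exponent, which the chosen $c$ and the standard permutation version of McDiarmid's inequality deliver with room to spare relative to the factor $1/(16q)$ in the statement.
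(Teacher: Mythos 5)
Your proposal is correct and follows essentially the same route as the paper: compute the expectation $\bigl(\binom{n}{k}/\binom{qn}{qk}\bigr)|H|$ edge by edge, realise $\cQ$ via a uniform permutation, bound the effect of a transposition by $2n^{k-1}$, and apply a McDiarmid-type inequality for random permutations (the paper uses exactly such a result, with denominator $4z^2N$, which yields the constant $16q$ on the nose rather than with room to spare). The only cosmetic difference is that you evaluate $\Pr[X_e=1]$ by counting partitions directly, whereas the paper observes that $\sigma(e)$ is a uniform $qk$-set; both are routine and equivalent.
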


We will need a concentration inequality for random permutations~\cite[Proposition 2.3]{AlonDefantKravitz2022}.

\begin{proposition}  \label{proposition:concentration-permutation}
	Let $f: S_n \rightarrow \mathbb{R}$ be a function on permutations such that if $\pi, \pi' \in S_n$ differ by a transposition, then $|f(\pi) - f(\pi')| \leq z$.
	Then, for $\pi \in S_n$ chosen uniformly at random, we have $\Pr[ |f(\pi) - \Exp[f]| \geq t ] \leq 2 \exp( - t^2 / (4 z^2 n))$.
\end{proposition}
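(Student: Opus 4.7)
The plan is to establish the bound via the standard Azuma--Hoeffding martingale method, applied to the Doob martingale that sequentially reveals the values $\pi(1), \pi(2), \dotsc, \pi(n)$. This reduces the problem to the one-step stability of $f$ under transpositions, which is exactly what the hypothesis provides.

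Concretely, I would set $X_0 = \Exp[f(\pi)]$ and $X_i = \Exp[f(\pi) \mid \pi(1), \dotsc, \pi(i)]$ for $1 \leq i \leq n$, so that $(X_i)_{i=0}^{n}$ is a martingale with respect to the natural filtration and $X_n = f(\pi)$. The task then becomes to show that the increments satisfy $|X_i - X_{i-1}| \leq z$ almost surely. For this, fix a prefix $a_1, \dotsc, a_{i-1}$ and write $X_i = g(\pi(i))$, where
\[
g(a) = \Exp\bigl[f(\pi) \,\big|\, \pi(1) = a_1, \dotsc, \pi(i-1) = a_{i-1}, \pi(i) = a\bigr].
\]
Since $X_{i-1}$ is the uniform average of $g(a)$ over the $n-i+1$ admissible values of $a$, it suffices to prove $|g(a) - g(b)| \leq z$ for any two admissible $a, b$.

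The key (and really only non-routine) step is the coupling that establishes $|g(a) - g(b)| \leq z$. Given a uniformly random permutation $\sigma$ extending $(a_1, \dotsc, a_{i-1}, a)$, let $j = \sigma^{-1}(b)$ and define $\sigma'$ from $\sigma$ by swapping the values at positions $i$ and $j$. Then $\sigma'$ extends $(a_1, \dotsc, a_{i-1}, b)$, the map $\sigma \mapsto \sigma'$ is a bijection between the two sets of extensions (so it preserves the uniform measure), and $\sigma, \sigma'$ differ by a single transposition, whence $|f(\sigma) - f(\sigma')| \leq z$ by hypothesis. Taking expectations gives $|g(a) - g(b)| \leq z$ as desired.

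Finally, I would invoke the Azuma--Hoeffding inequality with $c_i = z$ to conclude
\[
\Prob\bigl[\,|X_n - X_0| \geq t\,\bigr] \;\leq\; 2\exp\!\left(-\frac{t^{2}}{2\sum_{i=1}^{n} z^{2}}\right) \;=\; 2\exp\!\left(-\frac{t^{2}}{2nz^{2}}\right),
\]
which is in fact stronger than the stated bound (the factor $4$ in the denominator leaves some slack). The main obstacle is purely conceptual rather than technical: one must identify the correct measure-preserving bijection on extensions that realises a transposition of $f$'s input, after which the argument becomes a direct application of a textbook concentration inequality.
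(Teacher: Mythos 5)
Your proof is correct. Note that the paper does not prove this proposition at all: it is imported as a black box from Alon, Defant and Kravitz \cite{AlonDefantKravitz2022}, so there is no in-paper argument to compare against. Your argument is the standard one for such permutation concentration inequalities: the Doob martingale exposing $\pi(1),\dotsc,\pi(n)$, the swap coupling $\sigma\mapsto\sigma'$ (which is indeed a measure-preserving bijection between the extensions of the two prefixes, since $\sigma^{-1}(b)>i$ for admissible $b\neq a$), and Azuma--Hoeffding. The resulting bound $2\exp(-t^2/(2nz^2))$ is, as you observe, slightly stronger than the stated $2\exp(-t^2/(4z^2n))$; the cited version simply carries a weaker constant, which is all the application in \cref{lemma:randomsquash} needs.
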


\begin{proof}[Proof of Lemma~\ref{lemma:randomsquash}]
	We will generate a random partition $\cQ$ of $V$ into sets of size $q$ by sampling a random permutation $\sigma \in S_{qn}$ and considering the random ordering $v_{\sigma(1)}, \dotsc, v_{\sigma(qn)}$ of $V(H)$.
	Then we define, for all $0 \leq i \leq n-1$, the sets $B_i = \{ v_{\sigma(qi + 1)}, v_{\sigma(qi+2)} \dotsc, v_{\sigma(qi + q)} \}$.
	
	First, we shall prove that $\Exp[|H_\cQ|] = \binom{n}{k} |H|/\binom{qn}{qk}$.
	By linearity of expectation it is enough to understand the probability that a given edge $e \in H$ contributes to $H_\cQ$.
	Specifically, it is enough to show that a given edge $e \in H$ is of the form $\bigcup_{i \in I} B_i$ for some $k$-set $I$ with probability $\binom{n}{k} / \binom{qn}{qk}$.
	We can see this as follows.
	For any $e \in H$, clearly the set $\sigma(e) = \{ \sigma(v) \colon v \in e\}$ corresponds to a uniformly random $qk$-set of $H$; so for a fixed $qk$-set $X$ the event $\sigma(e) = X$ has probability exactly~$1/\binom{qn}{qk}$.
	There are $\binom{n}{k}$ possible choices for $X$ (each corresponding to a choice of $k$-set of indices $I$) and for each different $X$ the events $\sigma(e) = X$ are disjoint, so indeed we see that $e$ contributes with an edge to $H_\cQ$ with probability $\binom{n}{k}/\binom{qn}{qk}$, as desired.
	
	Next, note that changing $\sigma$ by swapping two indices $i, j$ can change $|H_\cQ|$ by at most $2 n^{k-1}$ (corresponding to the edges which contain indices $i, j$ in $H_\cQ$).
	Thus, by Proposition~\ref{proposition:concentration-permutation} applied with $\eps n^k$, $qn$, $2n^{k-1}$ playing the rôles of $t$, $n$ and $z$, we get
	\begin{align*}
		\Pr \left[ |H_{\cQ}| \geq \frac{\binom{n}{k}}{\binom{qn}{qk}} |H| - \eps n^k \right] \leq 2 \exp \left( - \frac{\eps^2 n}{16q} \right),
	\end{align*}
	as required.
\end{proof}

Now we are ready to prove the main result of this section.

\begin{proof}[Proof of \cref{thm:generalsquash}]
	Let $\mu = \th_{d}^{\ham}(k,\ell)$, and let $H$ be a $qn$-vertex $qk$-graph with $\delta_{qd}(H) \geq (\mu + 2 \eps) \binom{qn - q d}{qk - qd}$.
	Let $\cQ$ be a random partition of $V(H)$ into sets of size $q$ each, and let $H_\cQ$ be the squashed $k$-graph generated by $\cQ$.
	
	For any $k$-uniform hypergraph $H$ and set $X \subseteq V(H)$, the link graph $L_H(X)$ is the $(k-|X|)$-uniform graph on $V(H) - X$ whose sets are precisely those $Y$ such that $Y \cup X \in E(H)$.
	Note that for any $d$-set $X$ in $V(H_\cQ)$, the link graph $L_{H_\cQ}(X)$ corresponds to a random squashing of the link graph of a $qd$-set in $H$, which by assumption contains at least $(\mu + 2 \eps) \binom{qn - q d}{qk - qd}$ many $q(k-d)$-sets.
	Thus, by Lemma~\ref{lemma:randomsquash}, we see that $L_{H_\cQ}(X)$ has at least $(\mu + \eps) \binom{n-d}{k-d}$ edges with probability at least $1 - 2 \exp(- \eps^2 (n-d) / 16q)$.
	Taking a union bound over all $\binom{qn}{qd}$ $qd$-sets in $V(H)$, we find that $H_\cQ$ has $\delta_d(H_\cQ) \geq (\mu + \eps) \binom{n-d}{k-d}$ with non-zero probability.
	Fixing such a choice, we deduce, since $\mu = \th_{d}^{\ham}(k,\ell)$ and $n$ is large, that $H_\cQ$ contains a Hamilton $\ell$-cycle.
	This corresponds naturally to a Hamilton $q \ell$-cycle in $H$, and we are done.
\end{proof}

\section{Conclusion}\label{sec:conclusion}

In this paper, we introduced a general framework for finding $k$-uniform Hamilton $\ell$-cycles when $k-\ell$ does not divide $k$.
As a consequence, we recover many known bounds in the Dirac setting and determine several new ones.
An important aspect of the $k$-graphs in our applications is that the $\ell$-components span all edges.
In other words, the graphs are globally connected, which allows us to take $F(G) = G$ in the context of \cref{def:hamilton-framework}.
In general, this may not be the case, and we expect that one has to select a proper subgraph $F(G) \subsetneq G$.
Our main result has been explicitly formulated to account for this situation and should therefore pave the way for future work in this direction.

An interesting next case to consider is the case for odd $k$, with $d = k-3$ and $\ell = k-2$.
The `space barrier' bound~\cite[Proposition 2.1]{HZ16} in this case is $\delta^{\ham}_{k-3}(k,k-2) \geq 1 - (1 -  {1}/{(2 \lceil k/2 \rceil )})^3$, and moreover Han--Zhao~\cite[Theorem 1.5]{HZ16} considered connectivity arguments to show that $\delta^{\ham}_{k-3}(k,k-2) \geq 1/4$.
The latter bound is better than the first for $k \geq 11$. We believe that the maximum of these two bounds should be the truth.
Beyond this, it would be interesting to obtain more accurate non-optimal bounds in terms of $k$, $d$ and $\ell$, similar to our work on tight cycles, where we proved that $\delta^{\ham}_d(k,\ell) \leq 1/(2(k-d))$ for all $1 \leq d \leq k-1$~\cite{LS22}.

Another problem concerns the situation when $k-\ell$ divides $k$.
Here our framework does not apply.
The main reason is that connectivity becomes more delicate and is no longer well-captured by a simple intersection property such as $\ell$-connectivity.
For instance in a $4$-uniform graph with an edge $\{a,b,c,d\}$, we can find a $1$-walk between any two vertices (\cref{prop:KMO-strongly-connected}).
However, we are not guaranteed a $2$-walk starting with $ab$ and ending with~$bc$.
It is not obvious whether this obstacle can be overcome in the context of Dirac-type problems (for instance by finding some gadget that gives `strong connectivity') or whether one has to restrict the search for Hamilton cycles to certain oriented $\ell$-sets.
In the latter case, it seems that the right way to tackle this problem is to work in the setting of directed hypergraphs as in our setup for tight cycles~\cite{LS24a}.
We recently learned about the related work of Letzter and Ranganathan~\cite{LR25} on positive codegree thresholds, who use blow-up covers in conjunction with tight connectivity to tackle situations when $k-\ell$ divides $k$.
 
While our applications only address the minimum degree setting, \cref{thm:framework-hamilton-cycle} applies to all properties that are preserved under subsampling.
This includes notions of quasirandomness~\cite{Lan23} and other degree conditions such as degree sequences~\cite{JLS23}.
It would be interesting to see whether one can establish Hamilton frameworks under these conditions.
Lastly, we mention that \cref{thm:framework-hamilton-cycle} also simplifies the stability analysis that is typically employed to obtain exact minimum degree conditions for Hamilton $\ell$-cycles~\cite{BMSSS18, HZ15b}.
We refer the reader to the work of the first author for further details~\cite{Lan23}.

\section*{Acknowledgements}
	
The first author was supported by the Ramón y Cajal programme (RYC2022-038372-I) and by grant PID2023-147202NB-I00 funded
by MICIU/AEI/10.13039/501100011033.
The second author was supported by ANID-FONDECYT Regular Nº1251121 grant.

\bibliographystyle{amsplain}
\bibliography{../bibliography.bib}

\appendix

\section{Auxiliary results}\label{sec:auxilliary-results-proofs}

In the following, we prove two auxiliary results concerning boosting and transitions, whose proofs are similar to their tight counterparts~\cite{LS24a}.

\subsection{Boosting} \label{sec:boosting}

In this section, we show \cref{lem:booster}.
We require the following simple fact.

\begin{observation}[{\cite[Observation 11.1]{LS24a}}]\label{obs:fractional-threshold}
	Let $1/r,\,1/s,\,\eps \gg 1/n$.
	Let $P$ be an $n$-vertex $s$-graph with $\delta_r(P) \geq (1-1/s + \eps) \binom{n-r}{s-r}$.
	Then~$P$ has a perfect fractional matching.
\end{observation}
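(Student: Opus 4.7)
The plan is to prove this by LP duality together with complementary slackness. By LP duality, $P$ admits a perfect fractional matching precisely when every fractional vertex cover $y\colon V(P) \to \mathbb{R}_{\geq 0}$ (i.e.\ $\sum_{v \in e} y(v) \geq 1$ for every $e \in P$) satisfies $\sum_v y(v) \geq n/s$; note the uniform cover $y \equiv 1/s$ already certifies that the value $n/s$ is attainable. I will suppose for contradiction that there is an optimal cover $y$ with $Z := \sum_v y(v) < n/s$.

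The first step is to argue that $y$ must vanish at some vertex. If, on the contrary, $y(v) > 0$ for every $v \in V(P)$, then complementary slackness with an optimal fractional matching $w$ of $P$ forces every primal constraint to be tight: $\sum_{e \ni v} w(e) = 1$ for all $v$. Summing over $v \in V(P)$ gives $n = s \sum_{e \in P} w(e) = sZ$, hence $Z = n/s$, contradicting $Z < n/s$. Thus I may fix $v^* \in V(P)$ with $y(v^*) = 0$.

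The remainder is a short double-counting argument using the degree hypothesis. By \cref{fact:monotone-degrees}, the $r$-degree condition yields $\delta_1(P) \geq (1-1/s+\eps)\binom{n-1}{s-1}$. Summing $\sum_{v \in e} y(v) \geq 1$ over all edges $e \in P$ containing $v^*$, the left-hand side becomes $\sum_{u \neq v^*} y(u)\,\deg_P(\{v^*,u\})$ (using $y(v^*)=0$), which is at most $\binom{n-2}{s-2} Z$ by the trivial bound $\deg_P(\{v^*,u\}) \leq \binom{n-2}{s-2}$. Combining with the right-hand side lower bound $\deg_P(v^*) \geq (1-1/s+\eps)\binom{n-1}{s-1}$ yields, after rearrangement, $Z \geq (1-1/s+\eps)(n-1)/(s-1)$. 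A direct check shows this exceeds $n/s$ whenever $n \geq 1 + (s-1)/(s\eps)$, which holds comfortably under the hierarchy $1/s,\eps \gg 1/n$, giving the desired contradiction. I expect the complementary slackness step to be the only point of subtlety; the subsequent averaging is routine.
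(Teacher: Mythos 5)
Your proof is correct, but it takes a genuinely different route from the paper's. The paper first uses monotonicity of degree types (\cref{fact:monotone-degrees}) to pass to $\delta_1(P) \geq (1-1/s+\eps)\binom{n-1}{s-1}$, then invokes the Daykin--H\"aggkvist theorem (\cref{fact:matchingthresholds}) to obtain an integral perfect matching of $P-X$ for every $q$-set $X$ with $q \equiv n \bmod s$, and finally averages these matchings over all choices of $X$ to produce a perfect fractional matching. You instead work entirely at the LP level: you show the minimum fractional vertex cover equals $n/s$, using complementary slackness to locate a vertex $v^*$ with $y(v^*)=0$ and then double counting over the edges through $v^*$ (which is where the degree hypothesis enters, again via \cref{fact:monotone-degrees}). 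Both arguments are sound; I checked your final computation, and indeed $\binom{n-1}{s-1}/\binom{n-2}{s-2} = (n-1)/(s-1)$ gives $Z \geq (n-1)/s + \eps(n-1)/(s-1) > n/s$ once $n-1 > (s-1)/(s\eps)$, which the hierarchy supplies. Your approach is self-contained and sidesteps the divisibility bookkeeping and the black-box integral matching theorem; the paper's approach is shorter on the page because it outsources the double counting to the cited theorem (whose own proof is a counting argument of a similar flavour). The only point worth flagging is that your complementary slackness step tacitly uses strong LP duality (an optimal $w$ exists and $\sum_e w(e) = Z$); this is standard since both programs are feasible and bounded, but you should say it explicitly.
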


\begin{proof}
	By monotonicity of the degree-types (\cref{fact:monotone-degrees}), we have $\delta_1(P) \geq (1-1/s + \eps) \binom{n-1}{s-1}$.
	Write $n = m k + q$ with $0 \leq q \leq k-1$.
	By the choice of $n$ and \cref{fact:matchingthresholds}, $P$ has a perfect matching after deleting any set of (exactly) $q$ vertices.
	We may thus obtain a perfect fractional matching of $P$ by averaging over all possible choices of $q$ vertices.
\end{proof}

\begin{proof}[Proof of \cref{lem:booster}]
	We abbreviate $\P= \dcon_\ell \cap \dspa_\ell$.
	Let $G$ be a $k$-graph on $n$ vertices that $(\delta_1,r_1,s_1)$-robustly satisfies $\P$.
	Set $P = \PG{H}{\P}{s_1}$, and 
	let $\delta = 1 - 1/s$. Then $\delta_1 = \delta + \eps$, and therefore $\delta_{r_1}(P) \geq (\delta + \eps) \binom{n-r_1}{s_1-r_1}$.
	Now set $Q = \PG{P}{\DegF{r_1}{\delta+\eps/2}}{s_2}$.
	In other words, $Q$ is the $s_2$-graph on $V(P)=V(G)$ with an $s_2$-edge $S$ whenever the $s_1$-graph $P[S]$ has minimum ${r_1}$-degree at least $\left(\delta+\eps/2\right) \binom{s_2-r_1}{{s_1-r_1}}$.
	By \cref{lem:inheritance-minimum-degree}, it follows that $\delta_{r_2}(Q) \geq  \delta_2 \tbinom{n-r_2}{s_2-r_2}$.
	
	Fix an arbitrary $s_2$-edge $S' \in E(Q)$.
	To finish, it suffices to show that $G[S']$ satisfies $\Del_q(\P)$.
	To this end, let $D \subset S'$ be a set of at most $q$ vertices, and let $S = S' \sm D$.
	Our goal is now to show that $G[S]$ satisfies $\P = \dcon_\ell \cap \dspa_\ell$.
	
	For the space property, note that $P[S]$ has a perfect fractional matching by \cref{obs:fractional-threshold}.
	Furthermore, note that $G[R]$ has a perfect fractional $\ell$-cycle tiling for every $s_1$-edge $R$ in $P[S]$ by definition of $P$.
	We may therefore linearly combine these matchings to a perfect fractional  $\ell$-cycle tiling of $G[S]$.
	
	For the connectivity property, consider disjoint $e,f \in G[S]^{(\ell)}$.
	We have to show that $e$ and~$f$ are in the same $\ell$-component of $G[S]$.
	Note that $e$ and $f$ together span ${r_1}=2\ell$ vertices, and $\delta_{r_1}(P[S]) \geq (\delta+\eps/2) \binom{s_2-r_1}{s_1-r_1}$.
	Hence there is an $s_1$-edge $R \in P[S]$ that contains both $e$ and $f$.
	Moreover,  $G[R] \in \dcon_\ell$, by definition of $\P$.
	Then it follows that $e$ and $f$ are in the same $\ell$-component of $G[R]$, and hence in the same component of $G[S]$.
\end{proof}

\subsection{Transitions}\label{sec:transition}

In this section, we show \cref{lem:transition}.
Our proof uses the following two technical results.
The first one is an Inheritance Lemma, similar to \cref{lem:inheritance-minimum-degree}.

\begin{lemma}[{\cite[Lemma 8.1]{LS24a}}]\label{lem:inheritance-minimum-degree2}
	For $1/k,\,1/r,\,\eps \gg 1/s \gg 1/n$ and $\delta \geq 0$, let $G$ be an $n$-vertex $k$-graph, and let $D$ be a subset of $d$-sets in $V(G)$.
	Suppose that for each $e \in D$ there exists $G_e \subseteq G$ such that $\deg_{G_e}(e) \geq (\delta + \eps) \tbinom{n-d}{k-d}$.
	Let $P$ be the $s$-graph consisting of all $s$-sets $S$ such that, for all $e \in D$ contained in~$S$, $\deg_{G_e[S]}(e) \geq (\delta + \eps/2) \tbinom{s-d}{s-d}$.
	Then $\delta_{r}(P) \geq  (1-e^{-\sqrt{s}}    )  \tbinom{n-r}{s-r}$.\qed
\end{lemma}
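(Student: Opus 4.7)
The plan is a standard probabilistic inheritance argument, in the spirit of \cref{lem:inheritance-minimum-degree}, but now quantified simultaneously over every $d$-set in $D$ that happens to land inside the random $s$-sample. Fix an arbitrary $r$-set $T \subseteq V(G)$; it suffices to show that a uniformly random $s$-set $S$ with $T \subseteq S$ lies in $E(P)$ with probability at least $1 - e^{-\sqrt{s}}$, since averaging over all $r$-sets $T$ then yields the claim. Parametrise $S = T \cup S'$, where $S'$ is a uniformly random $(s-r)$-subset of $V(G) \setminus T$, and for each $d$-set $e \in D$ introduce the random variable $X_e$ counting those $(k-d)$-subsets $Y \subseteq S \setminus e$ with $e \cup Y \in G_e$. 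On the event $e \subseteq S$, this is precisely $\deg_{G_e[S]}(e)$, so $S \in E(P)$ fails only if some $e \in D$ with $e \subseteq S$ has $X_e < (\delta + \eps/2)\binom{s-d}{k-d}$.

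For each fixed $e \in D$, I would first show that, conditioned on $e \subseteq S$, the expectation of $X_e$ is at least $(\delta + 3\eps/4)\binom{s-d}{k-d}$. This follows from the hypothesis $\deg_{G_e}(e) \geq (\delta + \eps)\binom{n-d}{k-d}$ by a direct double-counting, where the loss of $\eps/4$ absorbs the mild bias coming from conditioning on $(T \cup e) \subseteq S$ in the regime $1/s \gg 1/n$. Next, I would apply McDiarmid's bounded-differences inequality to $X_e$ as a function of the underlying random subset $S'$: swapping one element of $S'$ changes $X_e$ by at most $\binom{s-1}{k-d-1}$, so a standard computation gives
\[
  \Pr\bigl[X_e < (\delta + \eps/2)\tbinom{s-d}{k-d} \,\big|\, e \subseteq S\bigr] \;\leq\; \exp\bigl(-\Omega(s)\bigr),
\]
where the constant hidden in $\Omega(\cdot)$ depends only on $k$ and $\eps$.

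To conclude, I would union-bound over the $d$-sets $e \in D$ with $e \subseteq S$. Weighting each individual failure probability by $\Pr[e \subseteq S \mid T \subseteq S]$ and summing over $e \in D$ produces a total at most $\Exp\bigl[|D \cap \binom{S}{d}|\bigr] \cdot \exp(-\Omega(s)) \leq s^d \exp(-\Omega(s))$, which is bounded by $e^{-\sqrt{s}}$ thanks to the hierarchy $1/k, 1/r, \eps \gg 1/s \gg 1/n$. Hence $S \in E(P)$ with probability at least $1 - e^{-\sqrt{s}}$, delivering the required degree bound on $P$. The only real bookkeeping nuisance — which I expect to be the main (but mild) obstacle — is handling the three sub-cases $e \subseteq T$, $e \cap T \neq \varnothing$ with $e \not\subseteq T$, and $e \cap T = \varnothing$ in a unified way: in each case the randomness driving $X_e$ sits inside a different slice of $S'$, but in all cases $X_e$ is a low-sensitivity function of $S'$ with the correct expectation, so the same concentration bound applies up to the implicit constants absorbed in the $\Omega(s)$ exponent.
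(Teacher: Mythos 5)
Your argument is correct and is precisely the standard hypergeometric-concentration-plus-union-bound proof that the paper attributes to \cite[Lemma 8.1]{LS24a} (the lemma is only cited here, not reproved): fixing an $r$-set, sampling the remaining $s-r$ vertices, verifying the conditional expectation of each $\deg_{G_e[S]}(e)$, applying a bounded-differences inequality for sampling without replacement, and union-bounding over the at most $\binom{s}{d}$ relevant $d$-sets, with $s^d e^{-\Omega(s)} \le e^{-\sqrt{s}}$ closing the argument. You also correctly read the degree threshold in the definition of $P$ as $(\delta+\eps/2)\binom{s-d}{k-d}$ (the displayed $\binom{s-d}{s-d}$ is a typo), and your handling of the bias from conditioning on $T\cup e\subseteq S$ is the right bookkeeping.
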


The following result tells us that a dense graph is guaranteed to contain a dense $(k-1)$-component.

\begin{lemma}[{\cite[Lemma 11.15]{LS24a}}]\label{lem:minimum-degree-component}
	Let $1/k,\, \eps \gg 1/n$  and $\delta \geq 2^{-1/(k-d)}$ for $1\leq d \leq k-1$.
	Let $G$ be a $k$-graph with $\delta_d(G)\geq (\delta +\eps) \binom{n-d}{k-d}$.
	Then $G$ contains a $(k-1)$-component $C \subset G$ with $\delta_d(C)\geq \delta^{k-d} \binom{n-d}{k-d}$.\qed
\end{lemma}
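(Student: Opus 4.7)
My plan is to proceed by induction on $r = k - d$. For the base case $r = 1$ (so $d = k-1$ and $\delta \geq 1/2$), I would pick any nontrivial $(k-1)$-component $C$ of $G$. For any $(k-1)$-set $S \in \partial_{k-1} C$, every pair of edges of $G$ containing $S$ shares $S$ itself and is therefore $(k-1)$-adjacent, so all edges of $G$ through $S$ already lie in $C$. Consequently $\deg_C(S) = \deg_G(S) \geq (\delta+\eps)(n-k+1) \geq \delta(n-k+1)$, which matches the target $\delta^{k-d}\binom{n-d}{k-d}$.

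For the inductive step $r \geq 2$, \cref{fact:monotone-degrees} yields $\delta_{d+1}(G) \geq (\delta+\eps)\binom{n-d-1}{k-d-1}$, and since $r \mapsto 2^{-1/r}$ is increasing, $\delta \geq 2^{-1/r}$ implies $\delta \geq 2^{-1/(r-1)}$. Thus the induction hypothesis applies at level $d+1$ and furnishes a $(k-1)$-component $C^*$ with
\[
\delta_{d+1}(C^*) \geq \delta^{k-d-1}\binom{n-d-1}{k-d-1}.
\]
To promote this to a $d$-degree bound on the same component, I use the counting identity $(k-d)\deg_{C^*}(S) = \sum_{v \in V_S}\deg_{C^*}(S \cup \{v\})$, where $V_S = \{v \in V(G) \setminus S : S \cup \{v\} \in \partial_{d+1} C^*\}$. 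Applying the inductive bound to each summand shows that the desired conclusion $\delta_d(C^*) \geq \delta^{k-d}\binom{n-d}{k-d}$ reduces to the extension count $|V_S| \geq \delta(n-d)$ for every $d$-set $S \in \partial_d C^*$.

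The heart of the proof, and its main obstacle, is precisely this extension count. My approach is a double-counting argument by contradiction: if $|V_S| < \delta(n-d)$ for some $d$-set $S \in \partial_d C^*$, then at least $(1-\delta)(n-d)$ vertices $v$ extend $S$ to a $(d+1)$-set whose entire $G$-link sits outside $C^*$. By the $(d+1)$-degree bound on $G$, each such bad extension contributes at least $(\delta+\eps)\binom{n-d-1}{k-d-1}$ edges of $G \setminus C^*$ through $S$, which produces a substantial competing link of $S$ away from $C^*$. Combining this lower bound on the weight of $S$ in $G \setminus C^*$ with the upper bound on what can lie outside $C^*$ at each $(d+1)$-extension (obtained from the inductive conclusion on $C^*$) forces an inequality that contradicts $\delta \geq 2^{-1/(k-d)}$. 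The root-power form of the threshold is exactly what makes the count balance: $\delta^{k-d} \geq 1/2$ is the statement that the dominant $(k-1)$-component must carry strictly more than half of the link at every $d$-set, and the induction on $r$ propagates this dominance from the $(d+1)$-level to the $d$-level.
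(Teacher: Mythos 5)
This lemma is quoted from~\cite[Lemma 11.15]{LS24a} and not reproved in the present paper, so there is no in-paper proof to compare against; judging your argument on its own terms, it has a fatal gap at the very first move of the inductive step. You invoke \cref{fact:monotone-degrees} to deduce $\delta_{d+1}(G)\geq(\delta+\eps)\binom{n-d-1}{k-d-1}$ from $\delta_d(G)\geq(\delta+\eps)\binom{n-d}{k-d}$, but the monotonicity runs in the opposite direction: for $d\leq d'$ it states $\delta_d(G)/\binom{n-d}{k-d}\geq\delta_{d'}(G)/\binom{n-d'}{k-d'}$, i.e.\ a degree condition at the \emph{larger} index implies one at the smaller index, never conversely. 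A minimum $d$-degree condition gives no control whatsoever on $\delta_{d+1}$. Concretely, for $k=3$, $d=1$, $\delta=2^{-1/2}$, take $G=K_n^{(3)}$ minus all edges containing both of two fixed vertices $u,v$: then $\delta_1(G)\geq\binom{n-1}{2}-(n-2)\geq(\delta+\eps)\binom{n-1}{2}$ for large $n$, yet $\delta_2(G)=0$. So the induction hypothesis cannot be applied at level $d+1$, and the whole inductive scaffolding collapses. (The lemma's conclusion does hold for this $G$ --- the unique tight component has $\delta_1\geq\tfrac12\binom{n-1}{2}$ --- it is only your route to it that fails.)

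Two further signs that the accounting is off, even setting the main gap aside. First, in the base case you only bound $\deg_C(S)$ for $(k-1)$-sets $S\in\partial_{k-1}C$ of an arbitrarily chosen component, whereas $\delta_{k-1}(C)$ ranges over all $(k-1)$-sets of $V(C)$; you need the additional (true, but unproven by you) observation that $\delta\geq 1/2$ forces all edges of $G$ into a single tight component. Second, if the inductive conclusion $\delta_{d+1}(C^*)\geq\delta^{k-d-1}\binom{n-d-1}{k-d-1}$ really held as a minimum degree over \emph{all} $(d+1)$-sets, your own handshake identity would immediately give $\delta_d(C^*)\geq\delta^{k-d-1}\binom{n-d}{k-d}$, making the ``extension count'' $|V_S|\geq\delta(n-d)$ superfluous and yielding a bound strictly stronger than the stated $\delta^{k-d}\binom{n-d}{k-d}$ --- a strong indication that the inductive statement you are implicitly using (degrees only over shadow sets) is not the one you are implicitly concluding. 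The exponent $k-d$ and the threshold $\delta^{k-d}\geq 1/2$ point instead to an argument carried out inside the link of a single $d$-set, where one identifies a majority tight component at each $d$-set and then shows these majorities coincide; the reduction to the codegree case via ``monotonicity upwards'' is not available.
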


\begin{proof}[Proof of \cref{lem:transition}]
	Introduce $\eps,s_3$ with  $1/k,\,1/s_1 \gg \eps \gg s_2 \gg s_3 \gg 1/n$.
	Let $Q = \PG{G}{\P}{s_1}$, and note that by assumption $\delta_{2k} (Q) \geq (1-1/s_1^2) \binom{n-2k}{s_1-2k}$.
	Denote by $F$ the Hamilton $\ell$-framework of $\P$ that we are guaranteed by the assumption.
	For each $T \in Q$, let $C_T$ be the $[k]$-graph with $C_T^{(k)} = F(G[T])$ and no further edges (for the moment).
	Let $C$ be the union of the $[k]$-graphs $C_T$ over all edges $T \in Q$.
	Note that for each $e \in \partial_\ell(C)$ there is at least one edge $T \in Q$ such that $e \in \partial_\ell(C_T)$.
	For our purposes, we require a bit more.
	To this end, let us say that $e \in \partial_\ell(C)$ is \emph{well-connected} if there are at least $\eps \binom{n-\ell}{s_1-\ell}$ edges $T \in Q$ such that $e \in \partial(C_T)$.
	Crude double-counting reveals that there are at least $\eps \binom{n}{\ell}$ well-connected sets in $\partial(C)$.
	Now we define a $k$-bounded hypergraph (which we still call $C$) by adding to $C$ all well-connected $\ell$-tuples.
	We claim that $C$ satisfies $s_2$-robustly $\con_\ell \cap \spa_\ell$.
	
	To see this, define an $s_2$-graph $P$ on $V(G)$ by adding an edge $S$ whenever
	\begin{enumerate}[\upshape (i)]
		\item \label{itm:Q[S]-large-degree} $Q[S]$ has minimum $\ell$-degree at least $(1-e^{-\sqrt{s_2}})\binom{n-\ell}{s_2-\ell}$,
		\item \label{itm:Q[S]-well-connected-existence} $S$ contains a well-connected $\ell$-set and
		\item \label{itm:Q[S]-well-connected-degree} every well-connected $\ell$-set $e \subset S$ is contained in $\partial_\ell(C_T)$ for at least $(\eps/2) \binom{s_2-\ell}{s_1-\ell}$ edges $T \in Q[S]$.
	\end{enumerate}
	It follows by \cref{lem:inheritance-minimum-degree,lem:inheritance-minimum-degree2} that $P$ has minimum $2k$-degree at least $(1-1/s_2^2) \binom{n-2k}{s_2-2k}$.
	Consider an edge $S \in P$.
	In the following, we show that~$G[S]$ satisfies $\con_\ell \cap \spa_\ell$. 
	
	We begin with connectivity.
	Recall that the {$\ell$-adherence} $\adh_\ell(C[S]) \subset (C[S])^{(k)}$ is obtained by taking the union of the \tight $\ell$-components $\tc(e)$ over all $e \in (C[S])^{(\ell)}$.
	We have to show that $\adh(C[S])$ is a single vertex-spanning \tight $\ell$-component.
	To this end, we first identify an \tight $\ell$-component $J \subset C[S]$ and then show that $J$ contains the components $\tc(e)$.
	By choice, each $C_T$ is itself \tightly $\ell$-connected.
	{Moreover, for all $T,T' \in Q$ with $|T \cap T'| = s_1-1$, the edges of $C_{T}^{(k)} = F(G[T])$ and $C_{T'}^{(k)} = F(G[T'])$ are in the same \tight $\ell$-component of $G[S]$ thanks to the consistency condition of \cref{def:hamilton-framework}.}
	{This inspires us to select an $s_2$-vertex \tight $(s_1-1)$-component $Q'_S \subset Q[S]$ of minimum $\ell$-degree at least 
		\begin{align*}
			(1-e^{-\sqrt{s_2}})^k \binom{s_2-\ell}{s_1-\ell} \geq (1-e^{-\sqrt{s_2}/2}) \binom{s_2-\ell}{s_1-\ell}\,,
		\end{align*}
		which is possible by property~\ref{itm:Q[S]-large-degree} and \cref{lem:minimum-degree-component}.}
	Let $J \subset C[S]$ be formed by the union of the $k$-graphs $C_{T}$ with $T \in Q'_S$.
	As observed above, $J$ is \tightly $\ell$-connected.
	
	Now consider $e \in (C[S])^{(\ell)}$, which exists by property~\ref{itm:Q[S]-well-connected-existence}.
	To prove that $C$ satisfies $\con_\ell$, we show that $\tc_{C[S]}(e) = J$.
	By the above it suffices to show that $e$ is contained in an edge $T \in Q'_S$.
	Recall that by definition $e$ is well-connected in $G$, and hence $e$ is contained in $\partial_\ell(C_T)$ for at least $(\eps/2) \binom{s_2-\ell}{s_1-\ell}$ edges $T \in Q[S]$ by property~\ref{itm:Q[S]-well-connected-degree}.
	Since $\eps/2 > e^{-\sqrt{s_2}/2}$, this means that one of these edges is in $Q'_S$, and we are done.
	It follows that $C$ satisfies $\con_\ell$.
	
	For the space property, we proceed as in the proof of \cref{lem:booster}.
	First observe that $Q'_S$ has a perfect fractional $\ell$-cycle tiling.
	Indeed, by \cref{fact:matchingthresholds,obs:fractional-threshold}, the $s_2$-vertex $s_1$-graph $Q'_S$ still has a perfect fractional matching.
	Furthermore, note that $C[T]$ has a perfect fractional $\ell$-cycle tiling for every $s_1$-edge $T$ in $Q'_S$.
	We may therefore linearly combine these tilings to a fractional perfect $\ell$-cycle tiling of $C[S]$. 
\end{proof}

\section{Hamilton connectedness}\label{sec:connectedness}

In this section, we show \cref{thm:framework-connectedness-robust}.
Given \cref{prp:allocation-Hamilton-cycle}, the proof follows mutatis mutandis the one of its tight analogue~\cite[Theorem 5.8]{LS24a}.
We require the following two auxiliary results.
The statements have been simplified to the setting of constant size cluster and undirected hypergraphs.

\begin{lemma}[{\cite[Lemma 7.2]{LS24a}}]\label{lem:connecting-blow-ups}
	Let $\eps,  1/s_1 \gg 1/s_2 \gg c \gg 1/n$.
	{Suppose $m_1 \leq n/s_2$ and $m_2= (\log m_1)^c$ are positive integers.}
	Let  $G$ be an $n$-vertex $[k]$-graph.
	Let $\P$ be a family of $[k]$-graphs.
	Suppose that $G$ satisfies  $(\eps,s_1,s_2)$-robustly $\P$.
	Let $\cV$ be an $m_1$-balanced set family in $V(G)$ with $s_1$ clusters.
	Then $G$ contains an $s_2$-sized quasi $m_2$-balanced $\P$-blow-up $T(\cW)$ such that $\cW$ hits $\cV$, and such that the exceptional vertex of~$\cW$ is not contained in $\cV$.
\end{lemma}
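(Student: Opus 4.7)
The plan is to convert the robustness of $\P$ into a partite supersaturation statement and then extract a large blow-up via a box-type argument.

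First, I would set up an auxiliary partite structure. Let $U_{s_1+1},\dots,U_{s_2}$ be arbitrary pairwise disjoint subsets of $V(G)\setminus\bigcup\cV$, each of size $m_1$; this is possible since $s_1 m_1 \leq n s_1/s_2$ leaves ample room. Consider the $s_2$-partite $s_2$-uniform hypergraph $H$ with parts $V_1,\dots,V_{s_1},U_{s_1+1},\dots,U_{s_2}$, where a transversal $\{v_1,\dots,v_{s_2}\}$ is an edge of $H$ precisely when $G[\{v_1,\dots,v_{s_2}\}]\in\P$. The $(\eps,s_1,s_2)$-robustness hypothesis applied to each $s_1$-transversal of $V_1,\dots,V_{s_1}$ shows that an $\eps$-fraction of its $(s_2-s_1)$-extensions within $U_{s_1+1}\times\cdots\times U_{s_2}$ lies in $H$; after summing over transversals, $H$ has edge density at least $\eps' = \eps'(\eps,s_1,s_2) > 0$ in its partite ground set.

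Second, I would invoke a partite Erd\H{o}s-box type theorem: a dense $s_2$-partite $s_2$-uniform hypergraph on parts of size $m_1$ with density $\eps'$ contains a complete $s_2$-partite sub-hypergraph with each part of size at least $(\log m_1)^{c'}$, where $c' = c'(\eps',s_2) > 0$. With the hierarchy $1/s_2 \gg c$, we may take $c' \geq c$, so this produces subsets $W_1\subseteq V_1,\dots,W_{s_1}\subseteq V_{s_1}$ and $W_{s_1+1}\subseteq U_{s_1+1},\dots,W_{s_2}\subseteq U_{s_2}$, each of size at least $m_2 = (\log m_1)^c$, such that every $s_2$-transversal of $W_1,\dots,W_{s_2}$ induces a $[k]$-graph in $\P$ on $G$.

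Third, I would align the reduced graph. There are only $O_{s_2,k}(1)$ many labeled $[k]$-graphs on $s_2$ vertices, so by iterated pigeonhole inside each part (shrinking each $W_i$ by at most a constant factor depending on $s_2$ and $k$) I can ensure that every $s_2$-transversal of the refined parts $W_1,\dots,W_{s_2}$ induces the \emph{same} labeled $[k]$-graph $T$. Then $T\in\P$ and $T(\cW)\subseteq G[\cW]$ for $\cW=\{W_1,\dots,W_{s_2}\}$. Since $s_2 > s_1$, at least one part $W_{i_0}$ with $i_0 > s_1$ exists outside $\cV$; shrinking $W_{i_0}$ to a single vertex yields a quasi $m_2$-balanced $\P$-blow-up whose exceptional singleton is not in $\bigcup\cV$, and whose subfamily $\{W_1,\dots,W_{s_1}\}$ hits $\cV$ as required.

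The main technical hurdle is the supersaturation step: one must verify that the partite Erd\H{o}s-box argument produces parts of size $(\log m_1)^c$ when the density is $\eps'$, which requires matching the quantitative bounds against the hierarchy $\eps, 1/s_1 \gg 1/s_2 \gg c \gg 1/n$. This is standard but requires some care; the rest of the proof is a clean pigeonhole together with the setup above.
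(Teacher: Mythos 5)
This lemma is imported verbatim from~\cite[Lemma 7.2]{LS24a} and is not re-proved in the present paper, so your argument can only be measured against the statement itself and the standard proof of such blow-up lemmas. Your overall architecture --- partite supersaturation, then an Erd\H{o}s box theorem, then a pigeonhole to fix the reduced graph --- is indeed the right skeleton, but two of your three steps have genuine gaps.

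The supersaturation step is false as written. The $(\eps,s_1,s_2)$-robustness hypothesis says that every $s_1$-set lies in at least $\eps\binom{n-s_1}{s_2-s_1}$ good $s_2$-sets \emph{among all of $V(G)$}; it says nothing about how many of these are transversal to a \emph{fixed, arbitrarily chosen} collection $U_{s_1+1},\dots,U_{s_2}$. The transversal extensions of a given $s_1$-set form only a $\Theta_{s_1,s_2}\bigl((m_1/n)^{s_2-s_1}\bigr)$-fraction of all extensions, so the good extensions guaranteed by robustness may miss your box entirely and the density of $H$ can be $0$. Worse, since $|\bigcup\cV|=s_1m_1$ may be as large as $s_1n/s_2$, a typical $(s_2-s_1)$-extension of a transversal of $\cV$ contains about $s_1$ further vertices of $\bigcup\cV$, so one cannot even conclude that a positive fraction of the good $s_2$-sets through a transversal place their remaining vertices outside $\bigcup\cV$ --- which is what the disjointness built into ``$\cW$ hits $\cV$'' requires. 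Repairing this needs an averaging argument over a random choice of the auxiliary parts (or a random ordering of $V(G)$), combined with a count of how good $s_2$-sets interact with $\bigcup\cV$; this is the real content of the lemma and the place where $m_1\le n/s_2$ and $1/s_1\gg 1/s_2$ do their work, and it cannot be disposed of in one sentence.

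The alignment step is also wrong, in a way that matters quantitatively. Whether a transversal induces a given labelled $[k]$-graph is not a coordinatewise condition, so no ``iterated pigeonhole inside each part, shrinking by a constant factor'' can force \emph{all} transversals of a product set to induce the same labelled graph: making the partite $j$-graph on each index set $J$ complete or empty is itself a box-theorem application, and iterating the box theorem $\binom{s_2}{\le k}$ times would shrink the parts to iterated logarithms of $m_1$, destroying the bound $m_2=(\log m_1)^c$. The correct order is the reverse of yours: first pigeonhole the transversals by the labelled $[k]$-graph they induce (there are only $O_{k,s_2}(1)$ types, so some type retains density $\eps''>0$), then apply the box theorem \emph{once} to that colour class; a complete box all of whose transversals induce $T$ automatically satisfies $T(\cW)=G[\cW]$. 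With these two repairs the remaining bookkeeping (placing the exceptional singleton in a part of index greater than $s_1$, checking $(s_2-s_1)m_1\le n-s_1m_1$) is fine.
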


\begin{lemma}[{\cite[Lemma 7.3]{LS24a}}]\label{lem:blow-up-support}
	Let $1/k,\,\eps,\, 1/s,\,1/m \gg \beta \gg 1/n$ and $1 \leq d \leq k$.
	Let~$G$ be an $n$-vertex $[k]$-digraph.
	Let~$\P$ be a family of $s$-vertex $[k]$-digraphs with $R^{(d)}$ non-empty for each $R \in \P$.
	Suppose that $\PG{G}{  \P   }{s}$ has at least $\eps n^s$ edges.
	Then there are at least $\beta n^{d}$ edges $e \in G^{(d)}$ such that there are $\beta n^{ms-d}$ many $m$-balanced $\P$-blow-ups in~$G$ that contain $e$ as an edge.
\end{lemma}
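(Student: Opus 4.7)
The plan is to combine a pigeonhole reduction on the isomorphism type of $s$-vertex $[k]$-digraphs in $\P$, a supersaturation bound lifting induced copies to induced $m$-balanced blow-ups, and a standard double-counting / averaging argument over (blow-up, $d$-edge) incidences.

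First, since $\P$ consists of $s$-vertex $[k]$-digraphs and both $s$ and $k$ are constants, $\P$ has only finitely many isomorphism classes, say at most $c = c(s,k)$. Among the $\eps n^s$ edges of $\PG{G}{\P}{s}$, pigeonhole yields a single $R^\ast \in \P$ realised by at least $(\eps/c) n^s$ of them; fixing a vertex ordering then produces at least $\eps_0 n^s$ labelled induced copies of $R^\ast$ in $G$, for some $\eps_0 = \eps_0(\eps, s, k) > 0$. Next I would invoke a supersaturation bound for induced blow-ups: these many copies of $R^\ast$ force at least $\beta_0 n^{ms}$ labelled induced $m$-balanced $R^\ast$-blow-ups in $G$, that is, tuples of pairwise disjoint $m$-sets $(V_1, \dotsc, V_s)$ with $G[V_1 \cup \dotsb \cup V_s] = R^\ast(V_1, \dotsc, V_s)$. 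This supersaturation step can be verified by sampling a uniformly random $ms$-subset $T \subseteq V(G)$, noting that the expected number of labelled copies of $R^\ast$ inside $G[T]$ is $\Omega(\eps_0)\binom{ms}{s}$, and extracting the partite structure by a cleaning or inductive argument which is feasible because $m$ is a small constant in the hierarchy $1/m \gg \beta$.

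Second, I would double count pairs $(B, e)$ where $B = R^\ast(\cV)$ is an $m$-balanced $R^\ast$-blow-up in $G$ and $e$ is a $d$-edge of $B$. Since $R^{\ast(d)}$ is non-empty, every $B$ contributes at least $m^d$ such pairs, giving in total at least $\beta_0 m^d n^{ms}$ pairs. On the other hand, each $d$-edge $e \in G^{(d)}$ is contained in at most $n^{ms-d}$ blow-ups, simply because there are that many ways to choose the remaining $ms-d$ vertices together with their cluster assignment. A standard averaging argument then yields at least $\beta n^d$ many $d$-edges $e$ each lying in at least $\beta n^{ms-d}$ blow-ups, for any sufficiently small $\beta$, which is consistent with the hierarchy $1/m \gg \beta \gg 1/n$ in the statement.

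The main technical obstacle is the supersaturation step in the first paragraph: showing that many induced copies of $R^\ast$ in $G$ force many induced $m$-balanced blow-ups of $R^\ast$. For non-induced (homomorphism) copies this is a classical Kruskal--Katona / Erdős--Stone-type argument, but for induced copies in hypergraphs the proof is more delicate, requiring care to avoid forbidden incidences inside the clusters; it nevertheless goes through by a direct probabilistic or inductive construction once $m$ is treated as a small constant relative to $1/\beta$.
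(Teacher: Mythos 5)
Your outline matches the standard proof of this lemma (which this paper only cites from~\cite{LS24a}): pigeonhole over the finitely many isomorphism types and orderings, supersaturation in the $s$-uniform property graph, and a final double count over pairs consisting of a blow-up and one of its $d$-edges. The numerology is also fine: the supersaturation constant depends only on $\eps$, $s$, $m$ and $k$, so it dominates $\beta$ in the stated hierarchy, and the averaging at the end goes through since each $d$-edge lies in at most $O(n^{ms-d})$ blow-ups.

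One correction, which in fact removes the step you single out as the main technical obstacle. You do not need (and in general cannot get) blow-ups that are induced on the union of the clusters: edges inside a cluster and non-partite edges across clusters are invisible to any count of copies of $R^\ast$ and cannot be controlled. But the lemma only asks for $R^\ast(\cV)\subseteq G$, i.e.\ for the blow-up to be a subgraph. The clean way to obtain this is to apply Erd\H{o}s's supersaturation theorem not to $G$ itself but to the $s$-uniform pattern graph whose edges are the ordered $s$-tuples inducing $R^\ast$ with a fixed labelling; a copy of the complete $s$-partite $s$-graph $K^{(s)}(m,\dots,m)$ in that auxiliary graph means every transversal of the clusters induces $R^\ast$ in the prescribed pattern, which immediately gives $R^\ast(\cV)\subseteq G$ (indeed it even pins down all partite edges). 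So there is no delicate induced-supersaturation argument to be made; the only supersaturation input is the classical one for complete partite $s$-graphs, and the rest of your argument is correct as written.
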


\begin{proof}[Proof of \cref{thm:framework-connectedness-robust}]
	Given $k,{\ell},s_1$, let $r = 2k$ and introduce $p_1$, $p_2$, $q_1$, $q_2$, $j_2$, $j_1$ and $\beta$ with
	\begin{align*}
		\frac{1}{\ell},\,\frac{1}{k},\,\frac{1}{r}, \frac{1}{s_1} \gg \frac{1}{p_1} \gg \frac{1}{p_2} \gg \frac{1}{q_1} \gg \frac{1}{q_2} \gg \frac{1}{j_2} \gg \frac{1}{j_1} \gg \frac{1}{s_2} \geq \frac{1}{s_3} \gg \frac{1}{n} \quad \text{and}  \quad 	\frac{1}{k}, \, \frac{1}{j_1}, \,\frac{1}{p_2} \gg \beta \gg \frac{1}{s_2} \, .
	\end{align*}
	For brevity, let $\mathsf{Q} = \dcon_\ell \cap \dspa_\ell$.
	Since $\P$ is a family of $s_1$-vertex $k$-graphs which admits a Hamilton $\ell$-framework and $G$ satisfies $s_1$-robustly $\P$;
	we can apply \cref{lem:transition} to find  an $n$-vertex $[k]$-graph $G' \subset G \cup   \partial_\ell G $ that $p_1$-robustly satisfies $\mathsf{Q}$.
	By \cref{corollary:booster2}, $G'$ satisfies $(1 - \exp(-\sqrt{p_2}), {r}, p_2)$-robustly $\Del_k(\mathsf{Q})$.

	We apply \cref{lem:blow-up-support} (with $\Del_k(\mathsf{Q})$ playing the rôle of $\P$) to reveal that there are at least $\beta n^{\ell}$ edges $e \in G'^{(\ell)}$ such that there are $\beta n^{j_1p_2-\ell}$ many $j_1$-balanced $\Del_k(\mathsf{Q})$-blow-ups in $G'$ that contain~$e$ as an edge.
	Let $C$ be obtained from $G'$ by deleting all $\ell$-edges that do not have this property.
	Now fix any $s_2 \leq {s} \leq s_3$ with $s \equiv k \bmod k - \ell$.
	We claim that $C$ satisfies ${s}$-robustly $\hamcon$, which suffices to conclude.
	
	Let $P = \PG{G'}{\mathsf{Q}}{p_2}$.
	Set $Q = \PG{P}{\DegF{{r}}{1-1/p^2_2}}{{s}}$.
	In other words, $Q$ is the ${s}$-graph with vertex set $V(G)$ and an ${s}$-edge $S$ whenever the induced $p_2$-graph $P[S]$ has minimum ${r}$-degree at least $\left(1 - 1/p^2_2\right) \binom{{s}-{r}}{p_2-{r}}$.
	Since $G'$ satisfies $\Del_k(\mathsf{Q})$ $(1 - \exp(-\sqrt{p_2}), {r}, p_2)$-robustly, we have in particular that $\delta_{{r}}(P) \geq (1 - \exp(-\sqrt{p_2})) \tbinom{n - {r}}{p_2 - {r}}$.
	Hence, by \cref{lem:inheritance-minimum-degree}, it follows that $\delta_{{r}}(Q) \geq (1-1/(2{s}^2)) \tbinom{n-{r}}{{s}-{r}}$.
	
	Given an ${s}$-set $S \subset V(Q)$ and $e \in C^{\ell}$, we say that $S$ is \emph{$e$-tracking} if there is a $j_1$-balanced $\Del_k(\mathsf{Q})$-blow-up in $C[S] \cup \{e\}$ that contains $e$ as an edge.
	We say that $S$ is \emph{tracking} if it is $e$-tracking for all $e \in C^{\ell}[S]$.
	Finally, we call an ${s}$-set $S \subset Q$ \emph{bueno} if $C^{\ell}[S]$ has at least two disjoint edges.
	Denote by $Q' \subset Q$ the $n$-vertex subgraph of tracking and bueno edges.
	
	\begin{claim}
		We have $\delta_{{r}}(Q') \geq  (1-1/{s}^2)   \tbinom{n-{r}}{{s}-{r}}$.
	\end{claim}
	
	\begin{proofclaim}
		Fix a set of ${r}$ vertices $D$.
		Since $\beta \gg 1/{s}$, by \cref{lem:inheritance-minimum-degree} (applied with $r=0$, $d=0$ and $\delta=0$) we deduce that there are at most $1/(4s^2)\binom{n - {r}}{{s} - {r}}$ sets $S$ containing $D$ for which the induced $[k]$-graph $C[S]$ has fewer than $(\beta/2) {s}^{\ell}$ $(\ell)$-edges.
		In the opposite case, $C[S]$ has at least $(\beta/2) {s}^{\ell} > \binom{{s}-1}{\ell-1}$ edges of size $\ell$, and hence must have at least two disjoint edges.
		In other words, the ${s}$-graph of bueno edges has minimum ${r}$-degree at least $(1 - 1/(4s^2))\binom{n - {r}}{{s} - {r}}$.
		
		Now we wish to apply \cref{lem:inheritance-minimum-degree2} in an auxiliary hypergraph.
		Let $H$ be the $(j_1 \cdot p_2)$-uniform hypergraph with an edge for every $j_1$-balanced $\Del_k(\mathsf{Q})$-blow-up with $p_2$-uniform edges in $G'$.
		By construction, each $e \in D$ belongs to at least $\beta n^{j_1p_2-(\ell)} \geq 2 \beta \binom{n - (\ell)}{j_1 p_2 - (\ell)}$ edges of $H$.
		We apply \cref{lem:inheritance-minimum-degree2} in $H$, with $j_1p_2$, ${s}$, $\beta$, $C$ playing the rôles of $k$, $s$, $\varepsilon$, $D$.
		We obtain that the ${s}$-uniform graph of tracking edges has minimum ${r}$-degree at least $(1 - 1/(4 s^2)) \binom{n - {r}}{{s} - {r}}$.
		
		Together with the above and the fact that $\delta_{{r}}(Q) \geq (1 - 1/(2s^2))\binom{n - {r}}{{s} - r}$, this confirms the claim.
	\end{proofclaim}
	
	Consider an edge $S \in Q'$, and let $D=C[S]$.
	Let $g,f \in D^{(\ell)}$ be disjoint, which exist since $S$ is bueno.
	To finish, we show that $D$ contains a Hamilton $(g,f,\ell)$-path.
	
	Since $Q' \subseteq Q$, from the definition of $Q$ we get that $\mathsf{Q}$ is $p_2$-robustly satisfied by~$D$.
	Using $1/k, 1/p_2 \gg 1/q_1 \gg 1/q_2 \gg 1/{s}$, we can apply \cref{lem:booster} twice and obtain that $D$ satisfies $\Del_{k}(\mathsf{Q})$ both $(1,q_1)$-robustly and $(2q_1,q_2)$-robustly.
	Introduce new constants $c, \eta$ such that $1/q_2 \gg c, \eta \gg 1/j_2$ hold, and let $m_1 = (\log {s})^{c}$ and $m_2 = (\log m_1)^{c}$.
	By \cref{prp:blow-up-cover} with ${s}$ playing the rôle of $n$,~$D$ has a $(q_1,q_2)$-sized $(m_1,m_2,\eta)$-balanced $\Del_{k}(\mathsf{Q})$-cover whose shape $F$ is a path.
	
	By definition of $Q'$, we have that $S$ is $g$-tracking.
	Hence, there is a $p_2$-sized $j_1$-balanced $\P$-blow-up $R^g(\cV^g)$ that contains $g$ as an edge.
	Let $x$ be the first vertex of $F$ with corresponding blow-ups $R^{x}(\cV^{x})$.
	Select a set of size $j_1$ inside each set of the family $\cV^{x}$; let $\cV^{x}_2$ be those sets.
	This can be done keeping the sets disjoint from the vertices of $R^g(\cV^g)$.
	In this way, we obtain a $j_1$-balanced $q_1$-sized blow-up $R^{x}(\cV_2^{x})$, and so that
	$\cV_2^{x} \cup \cV^g$ is a $j_1$-balanced family with $p_2 + q_1 \leq 2q_1$ clusters.
	We apply \cref{lem:connecting-blow-ups} with $\cV_2^{x} \cup \cV^g$ in place of $\cV$.
	This gives a $j_2$-balanced $\P$-blow-up $R^{gx}(\cW^{gx})$ such that $\cW^{gx}$ hits $\cV^g$ with $\cW^{gx}_{g} \subset \cW^{gx}$ and $\cV^{x}$ with $\cW^{gx}_x \subset \cW^{gx}$.
	We repeat the same process with $f$ to obtain $\P$-blow-ups $R^f(\cV^f)$ and  $R^{fy}(\cW^{fy})$, where $y$ is the last vertex of the path $F$.
	
	Note that these four blow-ups might intersect each other (quite a lot) outside of the dedicated hitting areas.
	To fix this, we may select $j_1'$-balanced and $j_2'$-balanced, respectively, subfamilies that are vertex-disjoint (except for the hitting areas), keeping the names for convenience.
	This can be done with  $j_1' = j_1/3$ and $j_2' = j_2/3$ using a random partitioning argument.
	We delete the vertices of these additional four blow-ups from the cover  $(\{\cV^v\}_{v \in V(F)},  \{\cW^{e}\}_{e \in F})$, keeping again the names for convenience.
	Since the additional blow-ups have only $2q_2(j_1'+j_2')$ vertices, $(\{\cV^v\}_{v \in V(F)},  \{\cW^{e}\}_{e \in F}))$ is still $(m_1,m_2,2\eta)$-balanced.
	Let $F'$ be the path obtained from $F$ by extending its ends with two vertices $g$ and $f$.
	
	To finish, we identify the vertices of $F$ with $1,\dots,\ell$ following the order of the path.
	For each $i \in V(F)$, we select disjoint oriented edges $e_i,f_i \in R^{i}(\cV^{i})$ with $e_1 = f$ and $e_\ell = g$, where $e_1$ and $e_\ell$ are oriented as in the assumption and all other edges are arbitrarily oriented.
	We then apply \cref{prp:allocation-Hamilton-cycle} to find a \tight Hamilton $(f_i,e_{j},\ell)$-path $P^{ij}$ in $R^{ij}(\cW^{ij})$ for each $i \in [\ell]$ and $j=i+1$.
	Note that the families $\cV^{i}$ are still $(1\pm 2\eta)m_1$-balanced (resp. $(1\pm 2\eta)j_1'$-balanced) after deleting the vertices of these paths.
	We may therefore finish by applying \cref{prp:allocation-Hamilton-cycle} to find a \tight Hamilton $(e_i,f_{i},\ell)$-path $P^{i}$ in $R^{i}(\cW^{i})$ for each $i \in [\ell]$ with index computations taken modulo $\ell$.
\end{proof}

\end{document}